\numberwithin{equation}{section}
\newtheorem{theorem}{Theorem}[section]
\newtheorem{lemma}[theorem]{Lemma}
\newtheorem{definition}[theorem]{Definition}
\newtheorem{proposition}[theorem]{Proposition}
\newtheorem{question}{Question}
\newtheorem{remark}[theorem]{Remark}
\title[Entanglement principle for the fractional Laplacian]{Entanglement principle for the fractional Laplacian with applications to inverse problems}
\author[A. Feizmohammadi]{Ali Feizmohammadi}
\address{Department of Mathematics, University of Toronto, 3359 Mississauga Road, Deerfield Hall, 3015, Mississauga, ON, Canada L5L 1C6}
\curraddr{}
\email{ali.feizmohammadi@utoronto.ca}
\author[Y.-H. Lin]{Yi-Hsuan Lin}
\address{Department of Applied Mathematics, National Yang Ming Chiao Tung University, Hsinchu, Taiwan \& Fakult\"at f\"ur Mathematik, University of Duisburg-Essen, Essen, Germany}
\curraddr{}
\email{yihsuanlin3@gmail.com}
\keywords{Fractional Laplacian, entanglement principle, Calderón problem, unique continuation, spherical mean transform, Runge approximation, Bernstein functions, super-exponential decay.	
}
\subjclass[2020]{Primary: 35R30, secondary 26A33, 42B37}
\newcommand{\C}{{\mathbb C}}
\newcommand{\R}{{\mathbb R}}
\newcommand{\Z}{{\mathbb Z}}
\newcommand{\N}{{\mathbb N}}
\newcommand{\eps}{\epsilon}
\newcommand {\p} {\partial}
\newcommand{\LC}{\left(}
\newcommand{\RC}{\right)}
\newcommand{\wt}{\widetilde}
\newcommand{\norm}[1]{\lVert #1 \rVert}
\newcommand{\abs}[1]{\left\lvert #1 \right\rvert}
\DeclareMathOperator{\supp}{supp} 
\begin{document}

	\maketitle
	
	\begin{abstract}
		We prove an entanglement principle for fractional Laplace operators on $\mathbb R^n$ for $n\geq 2$ as follows; if different fractional powers of the Laplace operator acting on several distinct functions on $\mathbb R^n$, which vanish on some nonempty open set $\mathcal O$, are known to be linearly dependent on $\mathcal O$, then all the functions must be globally zero. This remarkable principle was recently discovered to be true for smooth functions on compact Riemannian manifolds without boundary \cite{FKU24}. Our main result extends the principle to the noncompact Euclidean space stated for tempered distributions under suitable decay conditions at infinity. We also present applications of this principle to solve new inverse problems for recovering anisotropic principal terms as well as zeroth order coefficients in fractional polyharmonic equations. Our proof of the entanglement principle uses the heat semigroup formulation of fractional Laplacian to establish connections between the principle and the study of several topics including interpolation properties for holomorphic functions under certain growth conditions at infinity, meromorphic extensions of holomorphic functions from a subdomain, as well as support theorems for spherical mean transforms on $\mathbb R^n$ that are defined as averages of functions over spheres. 
	\end{abstract}

	\tableofcontents

	\section{Introduction}\label{sec: introduction}

	Fractional Laplace operators are a well-known example of nonlocal operators that satisfy a surprising \emph{unique continuation property} (UCP); if $u\in H^{r}(\R^n)$ for some $r\in \R$, and if $u$ and its fractional power of Laplacian of some order $s\in (0,1)$, namely $(-\Delta)^s u$, both vanish on some nonempty open set, then $u$ must vanish globally on $\mathbb R^n$, see e.g. \cite{GSU20}. We also refer the reader to \cite{Riesz} for a classical result with stronger assumptions on $u$; see also \cite{Fall01022014,ruland2015unique,Yu17} for related results. An analogous (UCP) as above has been derived in \cite{CMR20} for the higher-order fractional Laplacian $(-\Delta)^s$ with $s\in (-\frac{n}{2},\infty) \setminus \Z$. The above (UCP) with $s\in (0,1)$ was further extended in \cite{GLX} to the case of the fractional Laplace–Beltrami operators $(-\Delta_g)^s$ on $\R^n$ with a smooth Riemannian metric $g$. We also mention the recent work \cite{kenig2024fractional} that derives (UCP) results for certain classes of variable coefficient fractional dynamical Schr\"odinger equations.  A common technique in derivation of (UCP) results for fractional Laplace operators is the Caffarelli--Silvestre extension procedure \cite{Caffarelli08082007} together with Carleman estimates from \cite{ruland2015unique}, see also \cite{ghosh2021non} for an alternative proof using heat semigroups. The above-mentioned (UCP) has been a key tool in solving inverse problems for certain classes of nonlocal equations. We refer the reader to \cite{GSU20} for the first result in this direction which subsequently led to significant research on inverse problems for nonlocal equations. This will be further discussed in Section~\ref{sec_ip_applications}.

	\subsection{Entanglement principle for the fractional Laplace operator}
	
	In this paper, we are partly concerned with establishing (UCP) for \emph{fractional polyharmonic operators} on $\R^n$. Precisely, let $N\geq 2$ and let $\mathcal O\subset \R^n$ be a nonempty open set. Suppose that $u\in H^{r}(\R^n)$ for some $r\in \R$ and that there holds
	\begin{equation}\label{UCP_poly}
		u|_{\mathcal O}= \sum_{k=1}^N b_k ((-\Delta)^{s_k} u)|_{\mathcal O} =0,
	\end{equation}
	for some $\{b_k\}_{k=1}^N \subset \C\setminus \{0\}$ and some $\{s_k\}\subset (0,\infty)\setminus \N$. Does it follow that $u=0$ on $\R^n$? 
	
	Let us mention that such operators are physically motivated by some probabilistic models; see e.g. \cite[Appendix B]{DLV21}. To the best of our knowledge, no prior results address the above (UCP) formulated in this generality. The explicit Caffarelli-Silvestre extension procedure \cite{Caffarelli08082007} for representing fractional Laplace operators as Dirichlet-to-Neumann maps for degenerate elliptic equations has been a key tool in the study of (UCP) for single-term fractional Laplace operators (see e.g. \cite{ruland2015unique,GSU20}). Such explicit representations are not known for fractional polyharmonic operators. In addition, approaches based on heat semigroup representations of fractional Laplace operators face several technical difficulties, arising from the fact that multiple nonlocal terms contribute to the expression \eqref{UCP_poly} and isolating the terms is not feasible. In this paper, we establish (UCP) for \eqref{UCP_poly} as a particular case of a much broader principle that we refer to as the {\em entanglement principle} for fractional Laplace operators, stated as the following broad question.
	\begin{question}\label{question}
		Let $N\in \N$, let $\{s_k\}_{k=1}^N\subset (0,\infty)\setminus \N$ and let $\mathcal{O}\subset \R^n$ be a nonempty open set. Let $\{u_k\}_{k=1}^N$ be sufficiently fast decaying functions at infinity and assume that 
		\begin{equation}\label{ent_u_cond}
			u_1|_{\mathcal O}=\ldots=u_N|_{\mathcal O}=0 \quad \text{and} \quad  \sum_{k=1}^N  b_k((-\Delta)^{s_k}u_k)\big|_{\mathcal O}=0,
		\end{equation}
		for some $\{b_k\}_{k=1}^N\subset \C\setminus \{0\}$. Does it follow that $u_k\equiv 0$ in $\R^n$ for all $k=1,\ldots, N$?
	\end{question}
	When $N=1$, the above question has an affirmative answer, as it reduces to the well-known (UCP) for the fractional Laplace operator. However, for $N\geq 2$, this is a much stronger statement than (UCP), since it involves several distinct functions simultaneously in one equation. The nomenclature of the principle comes from \cite[Theorem 1.8]{FKU24} where, among other theorems proved in that paper, the authors discovered the entanglement principle for fractional Laplace-Beltrami operators on closed Riemannian manifolds, i.e. compact Riemannian manifolds without boundary. We thus aim to extend that principle to the case of Euclidean spaces. The main difference here lies in the noncompactness of the Euclidean space $\R^n$ which, as we will discuss later in Section~\ref{sec_outline_proof}, creates several important difficulties; see also \cite[Remark 1.9]{FKU24} on why compactness of the ambient manifold is an important feature there. We will affirmatively answer the above question under suitable decay rates for $\{u_k\}_{k=1}^{N}$ at infinity together with an additional assumption for the fractional exponents $\{s_k\}_{k=1}^N$. To state our result, we first need to define the notion of \emph{super-exponential decay at infinity} for a distribution on $\mathbb R^n$ as follows.
	
	\begin{definition}[Super-exponential decay at infinity]
		\label{def_exp}
		Let $u\in H^{-r}(\mathbb R^n)$ for some $r\in \R$. We say that $u$ has super-exponential decay at infinity if there exist constants $C,\rho>0$ and $\gamma>1$ such that given each $R>0$ there holds
		\begin{equation}\label{super-exponential decay weak}
		    |\langle u, \phi\rangle| \leq C e^{-\rho R^\gamma} \|\phi\|_{H^{r}(\mathbb R^n)}, \quad \text{for all } \phi \in C^{\infty}_0(\mathbb R^n\setminus B_R(0)).
		\end{equation}
		Here, $\langle \cdot,\cdot\rangle$ is the continuous extension of the Hermitian $L^2(\R^n)$-inner product as a sesquilinear form to $H^{-r}(\R^n)\times H^{r}(\R^n)$ and $B_R(0)$ is the closed ball of radius $R>0$ centered at the origin in $\R^n$.
	\end{definition}
    
	To answer Question \ref{question}, we need to impose the following additional assumption on $\{s_k\}_{k=1}^N$: 

   \begin{enumerate}[\textbf{(H)}]
       \item\label{exponent condition} 
       We assume $\{s_k\}_{k=1}^N \subset (0,\infty)\setminus \N$ with $s_1<s_2<\ldots <s_N$ and that 
	\begin{equation}
		\begin{cases}
			s_k-s_j \notin \Z \quad &\text{for all $j\neq k$,} \quad \quad\text{if the dimension $n$ is even}\\
			s_k -s_j\notin \frac{1}{2}\Z \quad &\text{for all $j\neq k$,} \quad \quad \text{if the dimension $n$ is odd}.
		\end{cases}
	\end{equation}
   \end{enumerate}

	Our main result may be stated as follows, which will be proved in Section~\ref{sec: entanglement}. 
    
	\begin{theorem}[Entanglement principle]\label{thm: ent}
		Let $\mathcal{O}\subset \R^n$, $n\geq 2$, be a nonempty bounded open set and let $\{s_k\}_{k=1}^N$ satisfy \ref{exponent condition}. Assume that $\{u_k\}_{k=1}^N\subset H^{-r}(\R^n)$ for some $r\in \R$ and that its elements exhibit super-exponential decay at infinity in the sense of Definition~\ref{def_exp}. If, 
		\begin{align}\label{condition_UCP_u}
			u_1|_{\mathcal O}=\ldots=u_N|_{\mathcal O}=0 \quad \text{and} \quad  \sum_{k=1}^N  (b_k(-\Delta)^{s_k}u_k)\big|_{\mathcal O}=0,
		\end{align}
		for some $\{b_k\}_{k=1}^N\subset \C\setminus \{0\}$, then $u_k\equiv 0$ in $\R^n$ for each $k=1,\ldots,N$.
	\end{theorem}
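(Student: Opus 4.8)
The plan is to use the heat semigroup representation of the fractional Laplacian to convert the hypotheses into statements about a holomorphic function on a slit half-plane, then exploit the super-exponential decay to control growth at infinity, and finally reduce to a support theorem for the spherical mean transform. Concretely, for each $k$ write $(-\Delta)^{s_k} u_k$ via the subordination formula, so that after pairing against a test function $\phi \in C^\infty_0(\mathcal O)$ the quantity $t \mapsto \langle e^{t\Delta} u_k, \phi\rangle$ extends to a holomorphic function $F_k(z)$ on $\{\operatorname{Re} z > 0\}$ (in fact, because $u_k$ has super-exponential decay with exponent $\gamma > 1$, one gets holomorphic extension across $z=0$ and into a sector, with controlled growth). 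The first condition $u_k|_{\mathcal O}=0$ forces $F_k$ and the relevant boundary data to vanish to infinite order as $z\to 0$; the second condition $\sum_k b_k (-\Delta)^{s_k} u_k|_{\mathcal O}=0$ becomes, after Mellin/Laplace-type manipulation of the subordinator kernels, a linear relation $\sum_k b_k z^{s_k} G_k(z) = 0$ among the extensions, where the hypothesis \ref{exponent condition} on the differences $s_k - s_j$ guarantees that the functions $z \mapsto z^{s_k}$ are ``linearly independent modulo the relevant lattice of branch ambiguities'' — this is precisely where even vs.\ odd dimension enters, through the half-integer shifts coming from the structure of the heat kernel / spherical means in $\R^n$.

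The key steps, in order, would be: (1) Set up the heat-semigroup identity $(-\Delta)^{s} u = \frac{1}{\Gamma(-s)}\int_0^\infty (e^{t\Delta} u - \text{polynomial corrections}) \, t^{-1-s}\, dt$ rigorously for $s \in (0,\infty)\setminus \N$ and $u \in H^{-r}(\R^n)$ with super-exponential decay, justifying convergence using the decay estimate \eqref{super-exponential decay weak} to beat the large-$t$ behavior. (2) For a fixed $\phi \in C^\infty_0(\mathcal O)$, analytically continue $t \mapsto \langle e^{t\Delta} u_k, \phi \rangle$ to a holomorphic function on a complex neighborhood of $(0,\infty)$; here super-exponential decay (with $\gamma>1$) is what yields an entire-type bound ensuring the continuation and giving the growth control needed to apply an interpolation/uniqueness theorem for holomorphic functions of controlled growth. (3) Use $u_k|_{\mathcal O}=0$ to show each $F_k$ vanishes to infinite order at $0$, hence (combined with the growth bound and a Phragmén–Lindelöf / Carlson-type argument) to propagate information. (4) Feed the single relation \eqref{condition_UCP_u} into the continued picture and, using \ref{exponent condition} to separate the powers $z^{s_k}$, deduce that each term $b_k z^{s_k} G_k(z)$ vanishes separately, i.e.\ effectively $(-\Delta)^{s_k} u_k|_{\mathcal O} = 0$ for each $k$. (5) Invoke the single-term UCP for the fractional Laplacian to conclude $u_k \equiv 0$; equivalently, translate step (4) into vanishing of spherical mean transforms of $u_k$ on a family of spheres and apply a support theorem to conclude $u_k \equiv 0$.

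The main obstacle I expect is step (4): decoupling the $N$ terms in the single equation \eqref{condition_UCP_u}. Unlike the compact manifold case, where one can use spectral decomposition and an eigenvalue separation argument (this is the role of compactness noted in \cite[Remark 1.9]{FKU24}), on $\R^n$ one has continuous spectrum and must instead argue at the level of the holomorphic continuations. The precise mechanism — showing that $\sum_k b_k z^{s_k} G_k(z) \equiv 0$ on a sector, together with the arithmetic hypothesis on $\{s_k\}$ and the super-exponential growth bounds on the $G_k$, forces each $G_k$ to vanish — is the technical heart. This is where the dichotomy between $s_k - s_j \notin \Z$ (even $n$) and $s_k - s_j \notin \frac12\Z$ (odd $n$) must be used: it ensures the monodromy/branch behavior of the $z^{s_k}$ around $0$ is genuinely distinct across $k$, so that no nontrivial cancellation among branches is possible. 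A secondary difficulty is making the heat-semigroup manipulations of step (1) rigorous for general Sobolev-regularity distributions (as opposed to smooth functions), which requires carefully pairing against test functions and using the super-exponential decay hypothesis uniformly in $R$.
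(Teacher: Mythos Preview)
Your high-level architecture (heat semigroup, holomorphic continuation, growth control from super-exponential decay, spherical means at the end) matches the paper's, but the two central steps (3)--(4) are misconceived, and this is a genuine gap. In step (3) you want to use that $F_k(z)=\langle e^{z\Delta}u_k,\phi\rangle$ vanishes to infinite order at the boundary point $z=0$ and combine this with growth bounds and a Phragm\'en--Lindel\"of/Carlson argument; but infinite-order vanishing at a boundary point of a half-plane forces nothing (think $e^{-1/z}$), and Carlson's theorem is about vanishing on an \emph{interior} lattice, not at a boundary point. In step (4) you assert that the single scalar hypothesis \eqref{condition_UCP_u} yields a functional identity $\sum_k b_k z^{s_k}G_k(z)=0$ in a complex variable $z$, to be decoupled by monodromy of the branches $z^{s_k}$. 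But \eqref{condition_UCP_u} gives only one number per test point (a single Mellin integral of the $F_k$'s against fixed powers $t^{-1-s_k}$), not an identity in $z$; and even granting such an identity, a branching argument would need the $G_k$ to be single-valued holomorphic across $z=0$, which is unavailable. The condition \ref{exponent condition} does not enter through monodromy.

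What the paper actually does is different at both points. First, from the single hypothesis one manufactures infinitely many relations by applying $\Delta^m$ (using $u_k|_{\mathcal O}=0\Rightarrow\Delta^m u_k|_{\mathcal O}=0$); after integration by parts in the heat representation this yields $F(m)=0$ for every $m\in\N$, where $F(z)=\sum_k c_k\,\Gamma(z+1+\alpha_k)\int_0^\infty(e^{t\Delta}v_k)(x)\,t^{-(z+1+\alpha_k)}\,dt$ with $\alpha_k$ the fractional part of $s_k$. Sharp growth bounds on $F$ in $\{\operatorname{Re} z\ge 0\}$ then feed into Pila's refinement of Carlson's theorem to force $F\equiv 0$. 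Second, the decoupling happens not near $0$ but by meromorphic continuation into the \emph{left} half-plane: using the explicit Euclidean heat kernel, the $k$-th summand of $F$ carries the factor $\Gamma(z+1+\alpha_k)\,\Gamma(z+\tfrac n2+\alpha_k)$, whose poles sit at $-\alpha_k-1-\ell$ and $-\alpha_k-\tfrac n2-\ell$. Condition \ref{exponent condition} is precisely what keeps the poles for distinct $k$ from colliding (and the $\tfrac12\Z$ restriction in odd dimension comes from the half-integer $n/2$, as you correctly intuited). Vanishing of the residues then gives, for each $k$ separately, the moment conditions $\int v_k(y)\,|x-y|^{2m}\,dy=0$ (even $n$) or $\int v_k(y)\,|x-y|^{2m+1}\,dy=0$ (odd $n$). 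These are \emph{not} the statement $(-\Delta)^{s_k}u_k|_{\mathcal O}=0$, so single-term UCP does not apply directly; instead a Fourier--Laplace argument (this is where $\gamma>1$ is genuinely used) converts the moments to vanishing spherical means, and then a support theorem finishes. The reduction from $H^{-r}$ to smooth super-exponentially decaying functions is handled by mollification and absorbing the integer part of $s_k$ into $(-\Delta)^{\lfloor s_k\rfloor}$, rather than by carrying distributional pairings throughout.
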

	
	\begin{remark}
    Let us make some remarks about the optimality of the assumptions in the above theorem:
		\begin{enumerate}[(i)]
		    \item 
		 The assumption \ref{exponent condition} that for all $k\neq j$, we have that $s_k-s_j \notin \Z$  is optimal in the sense that there are counterexamples in its absence, i.e., there would exist functions $\{u_k\}_{k=1}^N$ fulfilling \eqref{condition_UCP_u}, but with $u_k \not \equiv 0$, for some $k=1,\ldots, N$. We refer the reader to \cite[Remark 1.10]{FKU24} for the construction of such counterexamples. However, as can be seen in the statement of the theorem, when the dimension is odd, we impose an additional requirement that $s_k-s_j$ is not an odd multiple of $\frac{1}{2}$. This assumption may not necessarily be optimal and may be an artifact of our proof. In odd dimensions, when two exponents $s_k$ and $s_j$ differ by an odd multiple of $\frac{1}{2}$, the corresponding terms $(-\Delta)^{s_k} u_k$ and $(-\Delta)^{s_j} u_j$ appear to create a resonance effect in our analysis which does not allow us to disentangle them from each other. For the sake of clarity of our presentation, we impose this additional assumption in odd dimensions. We believe that this condition may be removable with some further analysis.
		
	\item	The super-exponential decay at infinity \eqref{super-exponential decay weak} imposed in the theorem may also not be optimal. In fact, for all but one step in our proof (see Proposition~\ref{prop_reduction from SM transform}), it suffices to assume that $\{u_k\}_{k=1}^{\infty}$ have Schwartz decay at infinity. It appears to us that some decay assumption on the distributions is unavoidable unless one were to use an entirely new methodology. Fortunately, when it comes to applications of our entanglement principle to inverse problems as well as Runge approximation properties of solutions to fractional polyharmonic equations, we always only need to work with functions that are compactly supported in $\R^n$ and as such the super-exponential decay stated in the theorem will be satisfied in its applications related to inverse problems. We will present the key ideas in the proof of Theorem~\ref{thm: ent} as well as a comparison with \cite[Theorem 1.8]{FKU24} in Section~\ref{sec_outline_proof}.
        \end{enumerate}
	\end{remark}

	\subsection{Applications to inverse problems}
	\label{sec_ip_applications}
	
	Let us first give a brief overview of the previous literature of inverse problems for nonlocal equations. In \cite{GSU20}, it was discovered that the (UCP) property for $(-\Delta)^s u$, $s\in (0,1)$, can be used as a key tool to solve certain inverse problems for nonlocal equations. There, the authors showed that it is possible to determine an unknown function $q\in L^{\infty}(\Omega)$ from the knowledge of the so-called exterior Dirichlet-to-Neumann mapping 
	$$ C^{\infty}_0(W_1)\ni f\mapsto \left. (-\Delta)^su \right|_{W_2} \in H^{-s}(W_2),$$
	where $u\in H^{s}(\R^n)$ is the unique solution to
	\begin{equation}
		\begin{cases}
			(-\Delta)^s u+q(x) u =0 &\text{ in }\Omega, \\
			u=f &\text{ in }\Omega_e:=\R^n\setminus \overline{\Omega}
		\end{cases}
	\end{equation}
 and $W_1, W_2\Subset \Omega_e$ are two nonempty open sets. This inverse problem may be viewed as a nonlocal version of the well-known isotropic Calder\'{o}n problem in electrical impedance tomography.  The connection between inverse problems for certain nonlocal equations and their strong (UCP) has since led to significant research in this direction. We mention several examples of related works. The works \cite{GRSU20,cekic2020calderon} investigate (UCP) under low regularity assumptions and recovery of lower order coefficients from finite numbers of exterior measurements. In \cite{GLX,CLL2017simultaneously,KLZ-2022,GU2021calder}, the authors investigate inverse problems for nonlocal variable coefficient operators. The works \cite{CMR20,CMRU20} study inverse problems for higher-order fractional operators, and \cite{KRZ-2023,lili24b,LL2022inverse,lin2020monotonicity} consider nonlocal equations in the presence of additional nonlinear lower order terms. In the articles \cite{LLR2019calder,KLW2021calder,ruland2018exponential,RS17}, the authors derived stability results for similar inverse problems as well as studying inverse problems for certain evolution-type nonlocal equations involving both space and time. Very recently, a different perspective has also been employed by using the Caffarelli-Silvestre type reduction formula to obtain some uniqueness results for inverse problems related to nonlocal PDEs by reducing them to inverse problems for local PDEs, see e.g. \cite{CGRU2023reduction,LLU2023calder,LZ2024approximation}.
	
	Let us mention also that a new direction of research was initiated in the recent works \cite{feizmohammadi2024fractional_closed,FGKU_2025fractional} for solving nonlocal versions of the well-known and widely open anisotropic Calder\'{o}n problem stated on closed Riemannian manifolds. These works use entirely different properties of fractional Laplace--Beltrami operators and do not rely on (UCP).  We refer the reader to the follow-up works \cite{Chien23,choulli2023fractional,FKU24,lili24a,lin2024fractional,LLU2022para,Quan24,ruland2023revisiting} in this direction. 
    
    Before characterizing our first result on inverse problems, let us briefly motivate it by recalling an equivalent global formulation of the anisotropic Calder\'{o}n problem in the setting of Euclidean space $\R^n$ using Cauchy datasets. The problem goes back to the pioneering paper of Calder\'{o}n in \cite{calderon}.  
    Let $\Omega\subset \R^n$ be a bounded Lipschitz domain with $n\geq 2$ and suppose that $A(x)=(A^{jk}(x))_{j,k=1}^n$ is a real-valued positive definite symmetric matrix on $\R^n$ that is equal to identity in the exterior of $\Omega$. Consider the Cauchy dataset 
    $$
    \mathscr S_A=\{ \left(u, \nabla\cdot(A\nabla u)\right)\big|_{\Omega_e}\,:\, u\in C^{\infty}(\R^n)\quad\text{and}\quad \nabla\cdot(A\nabla u)=0 \quad \text{in $\Omega$}\,\}.
    $$
    The anisotropic Calder\'{o}n problem is equivalent to the question of determining an a priori unknown matrix $A$ inside $\Omega$ from the knowledge of $\mathscr S_A$. It was noted by Tartar (account given in \cite{KV1984_Tartar}) that uniqueness is possible only modulo a gauge as follows. If $\Phi:\R^n\to \R^n$ is a diffeomorphism that fixes the exterior region $\Omega_e$, then there holds:
    $$  \mathscr S_{A} = \mathscr S_{B},\quad \text{where}\quad  B(x)= \frac{(D\Phi)A(D\Phi)^t}{|\det(D\Phi)|}\bigg|_{\Phi^{-1}(x)}\quad \forall\,x\in \R^n.$$
    Here, $D\Phi$ is the Jacobean matrix of $\Phi$. We mention that the anisotropic Calder\'{o}n problem has been solved in dimension two \cite{ASENS_2001_4_34_5_771_0,dd646efa-ce8c-38c2-804a-74a1c777c7c0} but remains widely open in higher dimensions outside the category of a real-analytic $A$ (which was solved in \cite{https://doi.org/10.1002/cpa.3160420804}). We refer the reader to \cite{SLSEDP_2012-2013____A13_0} for a survey of the anisotropic Calder\'{o}n problem.

	
	\subsection{Recovery of an anisotropic principal order term} For our first result, we solve a variant of the anisotropic Calder\'{o}n problem discussed above in the presence of lower order nonlocal terms in the equation. As we will see, in dimensions three and higher, we can determine the matrix $A$ uniquely without any gauge. To describe the result,  let us start with the setup. Let $\Omega \subset \R^n$ be a bounded Lipschitz domain with $n\geq 3$ and let us fix
    \begin{equation}\label{delta_0}
    	\delta_0 \in \left(-\frac{n}{2}, \frac{n}{2}-2\right).
    	\end{equation}
     Let $A(x)=\LC A^{jk}(x)\RC_{j,k=1}^n$ be a smooth real-valued positive definite symmetric matrix on $\R^n$ such that 
    \begin{equation}
        \label{matrix_iden}
        \LC A^{jk}(x)\RC_{j,k=1}^n =\mathds{1}_{n\times n} \quad \text{for $x\in \Omega_e=\R^n\setminus \overline\Omega$.}
    \end{equation}
    We consider the equation
	\begin{align}\label{equ: main}
		L_Au:=-\nabla \cdot (A(x)\nabla u)+P_0 u=0 \qquad &\text{ in }\Omega,
	\end{align}
	in the distributional sense, where the local anisotropic principal part has the divergence form
    
$$-\nabla \cdot (A(x)\nabla u)=-\sum_{j,k=1}^n \frac{\p}{\p x_j}\left(A^{jk}(x)\frac{\partial u}{\partial x_k} \right).$$   
and where $P_0$ is the fractional (variable coefficient) polyharmonic operator defined by 
	\begin{equation}\label{P_0_def}
		P_0u :=  \sum_{k=1}^N p_k\, (-\Delta)^{s_k}\left(p_k u\right) \qquad \text{for some $\{p_k\}_{k=1}^N\subset C^{\infty}_0(\R^n)$},
	\end{equation}
     for some $N\geq 1$, some $\{s_k\}_{k=1}^N\subset (0,\frac{1}{2}]$ with $s_1<\ldots<s_N$ and some collection of complex-valued functions $\{p_k\}_{k=1}^N\subset C^{\infty}_0(\R^n)$ that satisfy 
    \begin{equation}
        \label{p_k_def}
        p_k(x) \equiv b_k \quad \text{for $k=1,\ldots,N$ and all $x$ in some open neighbourhood $\mathcal U$ of $\overline{\Omega}$,}
    \end{equation}
    where $\{b_k\}_{k=1}^N\subset \C\setminus \{0\}$ is some set of $N$ nonzero numbers. We aim to study an inverse problem for solutions to \eqref{equ: main} subject to exterior measurements of its solutions. Precisely, our goal is to determine an a priori unknown matrix $A(x)$ in \eqref{equ: main} from the knowledge of the following (exterior) Cauchy dataset  
	$$ \mathcal C_A\subset W^{2,2}_{\delta_0}(\Omega_e) \times L^2_{\delta_0+2}(\Omega_e) \qquad \text{with}\qquad \Omega_e=\R^n\setminus \overline{\Omega},$$
	defined by
	\begin{equation}
		\label{eq: Cauchy data}
		\mathcal{C}_A:= \left\{\left(  u|_{\Omega_e},  \left.(L_Au) \right|_{\Omega_e} \right)\,:\, 
		\text{$u\in W^{2,2}_{\delta_0}(\R^n)$ \, with \, $L_Au=0$ \, in $\Omega$}
		\right\},
	\end{equation}
	where $\delta_0$ is as in \eqref{delta_0} and we recall that the operator $L_A$ is given by \eqref{equ: main} and that the equation is to be understood in the distribution sense. We will define the weighted Sobolev spaces $L^2_\delta(\R^n)$ and $W^{k,2}_\delta(\R^n)$ in Section \ref{sec: preliminary}. The richness of the Cauchy dataset, $\mathcal C_A$, will be discussed in Proposition~\ref{prop_solvability}, see also (iii) in Remark~\ref{rmk_thm_ip1}. Our first inverse problem can now be formulated as follows:
	
	\begin{enumerate}[\textbf{(IP1)}]
		\item\label{Q:IP1} \textbf{Inverse Problem 1.}  Can one uniquely determine the matrix-valued function $A$ in $\Omega$ from the exterior Cauchy dataset $\mathcal{C}_A$ defined by \eqref{eq: Cauchy data}?
	\end{enumerate}
	
	We prove the following global uniqueness result for \ref{Q:IP1} in Section~\ref{sec: IP}.

	\begin{theorem}[Global anisotropic uniqueness result]\label{Thm: global uniqueness 1}
		Let $\Omega \subset \R^n$, $n\geq 3$, be a bounded Lipschitz domain, let $N \in \N$, let $\{s_k\}_{k=1}^N\subset (0,\frac{1}{2}]$ with $s_1<\ldots<s_N$. Let $\{p_k\}_{k=1}^N\subset C^{\infty}_0(\R^n)$ satisfy \eqref{p_k_def}. For $j=1,2,$ let $A_j\in C^{\infty}(\R^n;\R^{n^2})$ be a positive definite symmetric matrix that satisfies \eqref{matrix_iden} and subsequently define $\mathcal{C}_{A_j}$ as the exterior Cauchy dataset \eqref{eq: Cauchy data} (with $A=A_j$ and $\delta_0$ fixed as in \eqref{delta_0}). Then,
		\begin{align}
			\mathcal{C}_{A_1}=\mathcal{C}_{A_2}\quad \text{implies that}\quad A_1=A_2 \text{ in }\Omega.
		\end{align}
	\end{theorem}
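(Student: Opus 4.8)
The plan is to deduce the theorem from the entanglement principle (Theorem~\ref{thm: ent}), invoked twice. Assume $\mathcal C_{A_1}=\mathcal C_{A_2}$ and set $B:=A_1-A_2$; by \eqref{matrix_iden} this is a smooth symmetric matrix vanishing on $\Omega_e$. Let $\mathcal U$ be the neighbourhood of $\overline\Omega$ from \eqref{p_k_def}, which we may take bounded, and put $\mathcal O:=\mathcal U\setminus\overline\Omega$, a nonempty bounded open set on which $p_k\equiv b_k$ for every $k$. Note first that since $s_1<\dots<s_N$ all lie in $(0,\tfrac12]$, every nonzero difference satisfies $0<|s_k-s_j|<\tfrac12$, hence $s_k-s_j\notin\tfrac12\Z$; thus hypothesis \ref{exponent condition} holds automatically in every dimension and Theorem~\ref{thm: ent} applies to the exponents at hand.

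\textbf{Step 1: matched solutions coincide.} Let $u_1\in W^{2,2}_{\delta_0}(\R^n)$ with $L_{A_1}u_1=0$ in $\Omega$, and pick (by the equality of Cauchy data) $u_2\in W^{2,2}_{\delta_0}(\R^n)$ with $L_{A_2}u_2=0$ in $\Omega$ and the same exterior Cauchy data; then $L_{A_1}u_1=L_{A_2}u_2$ as distributions on all of $\R^n$. The function $w:=u_1-u_2$ is supported in $\overline\Omega$, hence compactly supported, so it lies in $H^{-r}(\R^n)$ for some $r$ and has super-exponential decay in the sense of Definition~\ref{def_exp}. Restricting $L_{A_1}u_1-L_{A_2}u_2=0$ to $\Omega_e$, where $A_1=A_2$ is the identity and $w\equiv0$, the divergence-form terms cancel there and one is left with $(P_0w)|_{\Omega_e}=0$. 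Since $\operatorname{supp}w\subseteq\overline\Omega\subseteq\mathcal U$ and $p_k\equiv b_k$ on $\mathcal U$, one has $p_kw=b_kw$, hence $(-\Delta)^{s_k}(p_kw)=b_k(-\Delta)^{s_k}w$; multiplying by $p_k$ and restricting to $\mathcal O\subseteq\Omega_e$ (where again $p_k=b_k$) gives
\[
  \sum_{k=1}^{N}b_k^{2}\,(-\Delta)^{s_k}w=0\ \text{ on }\mathcal O,\qquad w|_{\mathcal O}=0 .
\]
Applying Theorem~\ref{thm: ent} with $u_1=\dots=u_N=w$ and nonzero coefficients $\{b_k^{2}\}$ yields $w\equiv0$, i.e.\ $u_1=u_2$. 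Thus the two solution spaces coincide, and subtracting $L_{A_1}u-L_{A_2}u=0$ (the $P_0$-contributions cancel) shows
\[
  \nabla\cdot(B\nabla u)=0\ \text{ in }\Omega\qquad\text{for every }u\in W^{2,2}_{\delta_0}(\R^n)\text{ with }L_{A_1}u=0\text{ in }\Omega .
\]

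\textbf{Step 2: Runge approximation.} I would then prove that $\mathcal S:=\{\,u|_\Omega:\ u\in W^{2,2}_{\delta_0}(\R^n),\ L_{A_1}u=0\text{ in }\Omega\,\}$ is dense in $H^2(\Omega)$. This is the point where the nonlocal perturbation $P_0$ breaks the gauge invariance of the classical anisotropic problem: without it, $\mathcal S$ would consist only of $A_1$-harmonic functions. The argument is a Hahn--Banach duality: a functional annihilating $\mathcal S$ is realised by a compactly supported $f\in H^{-2}(\R^n)$ with $\operatorname{supp}f\subseteq\overline\Omega$; solving the exterior Dirichlet problem $L_{A_1}^{*}\psi=f$ in $\Omega$, $\psi=0$ in $\Omega_e$ (well-posedness as in Proposition~\ref{prop_solvability}, the adjoint $L_{A_1}^{*}$ having the same structure with $p_k,b_k$ replaced by $\overline{p_k},\overline{b_k}$; $\psi$ is smooth near $\partial\Omega$ since $L_{A_1}^{*}\psi=0$ there), one tests the Green identity against $u\in\mathcal S$ and uses $f\perp\mathcal S$ together with the density of boundary traces of solutions (again Proposition~\ref{prop_solvability}) to deduce that the exterior conormal derivative of $\psi$ on $\partial\Omega$ also vanishes. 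Then $\psi$ extends by $0$ to an element of $W^{2,2}_{\delta_0}(\R^n)$ supported in $\overline\Omega$, and exactly as in Step~1 the identities $\operatorname{supp}\psi\subseteq\mathcal U$ and $\overline{p_k}\equiv\overline{b_k}$ on $\mathcal U$ turn $(L_{A_1}^{*}\psi)|_{\Omega_e}=0$ into $\sum_k\overline{b_k}^{\,2}(-\Delta)^{s_k}\psi=0$ on $\mathcal O$ with $\psi|_{\mathcal O}=0$; Theorem~\ref{thm: ent} (applied, as in Step~1, to the single function $\psi$) forces $\psi\equiv0$, whence $f=L_{A_1}^{*}\psi=0$.

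\textbf{Step 3 and the main obstacle.} Combining Steps~1 and~2, $\nabla\cdot(B\nabla u)=0$ in $\Omega$ holds for $u$ in a dense subspace of $H^2(\Omega)$, hence --- since $u\mapsto\nabla\cdot(B\nabla u)=(\operatorname{div}B)\cdot\nabla u+B:D^2u$ is continuous into $L^2(\Omega)$ as $B$ is smooth --- for every $u\in H^2(\Omega)$. Testing with affine functions gives $\operatorname{div}B\equiv0$ in $\Omega$, and then testing with quadratic polynomials gives $B:M\equiv0$ in $\Omega$ for every constant symmetric matrix $M$, so $B\equiv0$ in $\Omega$, i.e.\ $A_1=A_2$ in $\Omega$. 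The heart of the proof, and its main obstacle, is Step~2: obtaining density in the full space $H^2(\Omega)$ --- not merely in the kernel of the local principal part, which would reinstate the gauge ambiguity --- while controlling the nonlocal lower-order term $P_0$ throughout the duality argument; this includes the well-posedness of the exterior problems for $L_{A_1}$ and $L_{A_1}^{*}$ on the weighted spaces $W^{2,2}_{\delta_0}(\R^n)$ (where $n\geq3$ and the range \eqref{delta_0} of $\delta_0$ enter, to invert the principal part at infinity), the interior regularity needed to give meaning to the conormal trace of the dual solution, and the trace-density of solutions. Should density in all of $H^2(\Omega)$ prove unavailable, a local substitute suffices: via the same Runge machinery, the $2$-jets at any $x\in\Omega$ of solutions fill the subspace cut out by the frozen principal symbol of $L_{A_1}$ at $x$, which forces $B(x)=\lambda(x)A_1(x)$ for a scalar $\lambda$ together with $\operatorname{div}B\equiv0$; since $B|_{\partial\Omega}=0$, the function $\lambda$ then solves a homogeneous linear transport equation and vanishes on $\partial\Omega$, so $\lambda\equiv0$ and again $A_1=A_2$ in $\Omega$.
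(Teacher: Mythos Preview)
Your Step~1 is correct and coincides with the paper's first application of the entanglement principle: matched solutions agree globally, hence $\nabla\cdot(B\nabla u)=0$ on $\R^n$ for every $u\in W^{2,2}_{\delta_0}(\R^n)$ with $L_{A_1}u=0$ in $\Omega$.

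The gap is in Step~2. The density of $\mathcal S$ in $H^2(\Omega)$ is not established by your sketch, and several ingredients fail as written: (i) Proposition~\ref{prop_solvability} concerns the Poisson problem $L_{A}u=F$ on all of $\R^n$, not an exterior Dirichlet problem with $\psi|_{\Omega_e}=0$; solvability of the latter for data $f\in H^{-2}$ is a separate statement that you do not prove. (ii) The claim that ``$\psi$ is smooth near $\partial\Omega$ since $L_{A_1}^{*}\psi=0$ there'' is false in general: an element of $H^{-2}(\R^n)$ supported in $\overline\Omega$ need not vanish in any interior collar of $\partial\Omega$ (it may even be supported on $\partial\Omega$), so the conormal trace of $\psi$ has no a~priori meaning. (iii) The ``density of boundary traces of solutions'' is asserted but not proved and does not follow from Proposition~\ref{prop_solvability}. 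Your fallback $2$-jet argument is likewise unjustified: because of the nonlocal term $P_0$, the equation $L_{A_1}u=0$ does not impose a purely local constraint on the $2$-jet at a point, and you have not determined which $2$-jets are actually attained by solutions.

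The paper sidesteps the density question entirely. After Step~1, fix $h\in C^\infty_0(\Omega)$ and solve the \emph{global} Poisson equation $L_{A_1}w=\nabla\cdot(B\nabla h)$ on $\R^n$ via Proposition~\ref{prop_solvability}; the compatibility condition \eqref{phi_ortho} holds because $\nabla\cdot(B\nabla\overline{\zeta})=0$ for every $\zeta\in\mathcal K_{A_1}$ (kernel elements are themselves solutions, so Step~1 applies to them). Pairing the identity $\nabla\cdot(B\nabla u^{(1)})=0$ against $\overline h$, integrating by parts, and using $L_{A_1}u^{(1)}=F$ gives $(F,\overline w)_{L^2(\Omega_e)}=0$ for every admissible $F\in C^\infty_0(\Omega_e)$; Lemma~\ref{lem_ortho_cond} then yields $w|_{\Omega_e}=\zeta|_{\Omega_e}$ for some $\zeta\in\mathcal K_{A_1}$. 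The function $\widetilde w:=w-\zeta$ vanishes on $\Omega_e$ and still satisfies $L_{A_1}\widetilde w=\nabla\cdot(B\nabla h)$; restricting to $\Omega_e$ gives $(P_0\widetilde w)|_{\Omega_e}=0$, and a second application of Theorem~\ref{thm: ent} forces $\widetilde w\equiv 0$, hence $\nabla\cdot(B\nabla h)=0$ for every $h\in C^\infty_0(\Omega)$. Oscillatory test functions then give $B=0$. The point is that the auxiliary function lives on all of $\R^n$, where Proposition~\ref{prop_solvability} genuinely applies, and the passage through Lemma~\ref{lem_ortho_cond} replaces any appeal to boundary traces or exterior Dirichlet solvability.
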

    \begin{remark}
        \label{rmk_thm_ip1}
        Let us make some remarks about the above theorem.
        \begin{itemize}
            \item[(i)]{To the best of our knowledge, the above theorem is new even in the case $N=1$. Let us also point out that in the case that $N=1$, the condition \eqref{p_k_def} is not needed as long as $p_1$ does not vanish at any point in an open neighborhood of $\overline{\Omega}$. Observe that in comparison with the anisotropic Calder\'{o}n problem, and somewhat surprisingly, we have proven that in the presence of the nonlocal lower order terms $P_0u$ in the equation $L_Au=0$ in $\Omega$, there is no diffeomorphism gauge and one indeed recovers the matrix $A$ uniquely from the exterior Cauchy dataset.}

            \item[(ii)]{The assumption on the dimension, namely $n\geq 3$, is for simplicity of presentation of the forward problem and is related to the study of Fredholm properties of the operator $u\mapsto \nabla\cdot(A(x)\nabla u)$ which itself depends in a key way on mapping properties of the Laplacian, see example \cite{McOwen1979TheBO, Bartnik1986TheMO}. In dimensions three and higher, the Laplacian acts as an isomorphism between certain pairs of weighted Sobolev spaces of the form $W^{k,2}_{\delta}(\R^n)$. This will no loner be true in dimension two and one needs to work with more complicated Sobolev spaces.}
            
            \item[(iii)]{Both the assumptions $\{s_k\}_{k=1}^{N}\subset (0,\frac{1}{2}]$ and that $\{p_k\}_{k=1}^N\subset C^{\infty}_0(\R^n)$ are also related to the forward theory and the structure of Cauchy dataset $\mathcal C_A$. Under the previous two assumptions, we have the Fredholmness properties for the operator $L_A$ that is proven in Proposition~\ref{prop_solvability}. Finally, let us mention that, as we will see in the proof of Theorem~\ref{Thm: global uniqueness 1}, it suffices for us to work with certain smooth elements in $\mathcal C_A$ subject to certain growth conditions at infinity. Having this in mind, let us note that we could have alternatively defined a broader Cauchy dataset as follows
            	$$  \widetilde{\mathcal C}_{A} = \{ \left(  u|_{\Omega_e},  (L_Au)|_{\Omega_e}\right)\,:\, 
            	\text{$u\in C^{\infty}(\R^n)$ \, with \, $L_Au=0$ \, in $\Omega$}\},$$
            	where we recall that $L_A:C^{\infty}(\R^n) \to C^{\infty}(\R^n)$ since $\{p_k\}_{k=1}^N \subset C^{\infty}_0(\R^n)$. Thus, our formulation of the Cauchy dataset $\mathcal C_A$ should be viewed as working with less data compared to $\widetilde{\mathcal C}_A$.
        }

            \end{itemize}
    \end{remark}

    \subsection{Recovery of a zeroth order local term}
	Next, we discuss a partial data inverse problem for recovering zeroth order coefficients for fractional polyharmonic equations. We assume the more restrictive condition that $\{ b_k \}_{k=1}^N\subset (0,\infty)$ and an additional condition on the zeroth order coefficient.

	To state this partial data inverse problem, let us again consider $\Omega \subset \R^n$ to be a bounded Lipschitz domain with $n\geq 2$, let $\{s_k\}_{k=1}^N$ be such that \ref{exponent condition} is satisfied. 
	Next, let $q\in L^{\infty}(\Omega)$ and consider the exterior Dirichlet value problem
	\begin{equation}\label{equ: main 2}
		\begin{cases}
			P_q u =0 &\text{ in }\Omega, \\
			u=f &\text{ in }\Omega_e,
		\end{cases}
	\end{equation}
    where 
    \begin{equation}\label{P_def}
		P_qu :=  \sum_{k=1}^N b_k (-\Delta)^{s_k}u + q(x)u , 
	\end{equation}
	We will assume that $b_k>0$ for all $k=1,\ldots, N$, and that
	\begin{equation}\label{eigenvalue condition}
		\text{$0$ is not a Dirichlet eigenvalue of $P_q$}
	\end{equation}
	in the sense that 
	\begin{equation}
		\begin{cases}
			P_q u =0 & \text{ in }\Omega \\
			u=0 &\text{ in }\Omega_e
		\end{cases}
		\text{ implies }u\equiv 0 \text{ in }\R^n.
	\end{equation}
	Letting $W_1, W_2\Subset \Omega_e$ be two bounded nonempty open sets, and by using the condition \eqref{eigenvalue condition}, we can define the so-called \emph{Dirichlet-to-Neumann} (DN) map of \eqref{equ: main 2} via
	\begin{equation}\label{DN map}
		\Lambda_q: \wt H^{s_N}(W_1) \to H^{-s_N}(W_2), \quad f\mapsto \left. (P_q u_f) \right|_{W_2},
	\end{equation}
	where $u_f \in H^{s_N}(\R^n)$ is the unique solution to \eqref{equ: main 2}. We refer the reader to Section~\ref{sec: preliminary: fcn} for the definition of the $\wt H^{s}(U)$ spaces (for $s\in \R$) and to Section~\ref{sec: exterior problem} for the well-posedness of equation \eqref{equ: main 2}.
	
	\begin{enumerate}[\textbf{(IP2)}]
		\item\label{Q:IP2} \textbf{Inverse Problem 2.}  Can one uniquely determine the potential $q$ in $\Omega$ from the exterior DN map $\Lambda_q$ defined by \eqref{DN map}?
	\end{enumerate}
	
	We prove the following uniqueness result for \ref{Q:IP2} in Section~\ref{sec: IP}.
	
	\begin{theorem}[Global uniqueness for bounded potentials]\label{Thm: global uniqueness 2}
		Let $\Omega \subset \R^n$, $n\geq 2$, be a bounded Lipschitz domain, and let $W_1, W_2\Subset \Omega_e$ be nonempty bounded open sets. Let $N \in \N$, $\{b_k\}_{k=1}^N \subset (0,\infty)$ and let $\{s_k\}_{k=1}^N$ satisfy \ref{exponent condition}. For each $j=1,2,$ let $q_j\in L^\infty(\Omega)$ satisfy \eqref{eigenvalue condition} and define $\Lambda_{q_j}$ to be the DN map \eqref{DN map} (with $q=q_j$). Then, 
		\begin{align}
		\Lambda_{q_1}f=\Lambda_{q_2}f \text{ for any }f\in C^\infty_0(W_1), \quad \text{implies that}\quad q_1=q_2 \text{ in }\Omega.
		\end{align}
	\end{theorem}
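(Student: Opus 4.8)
The strategy follows the standard Alessandrini-type identity combined with Runge approximation, but the crucial new ingredient is the entanglement principle (Theorem~\ref{thm: ent}) used in place of the single-term unique continuation property. First I would establish the integral identity: for solutions $u_1$ to $P_{q_1}u_1=0$ and $u_2$ to $P_{q_2}u_2=0$ in $\Omega$ with exterior data supported in $W_1$, the hypothesis $\Lambda_{q_1}=\Lambda_{q_2}$ together with the symmetry of the bilinear forms associated to $\sum_k b_k(-\Delta)^{s_k}$ yields
\begin{equation}
  \int_\Omega (q_1-q_2)\, u_1\, \overline{u_2}\, dx = 0
\end{equation}
for all such $u_1,u_2$. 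This is routine once the weak formulation and well-posedness of \eqref{equ: main 2} from Section~\ref{sec: exterior problem} are in hand; the positivity $b_k>0$ guarantees coercivity of the form on $\widetilde H^{s_N}(\Omega)$ and hence the bilinear identity has the usual form.

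Next I would prove a Runge approximation property: the set of restrictions to $\Omega$ of solutions $u_f$ with $f\in C^\infty_0(W_1)$ is dense in a suitable solution space, say in $L^2(\Omega)$ or in the closure of $\{u|_\Omega\}$ in the relevant topology. The proof is by the Hahn--Banach/duality argument: if $v\in L^2(\Omega)$ annihilates all such $u_f|_\Omega$, one solves the adjoint problem $P_{q}^* w = v$ in $\Omega$ with $w=0$ in $\Omega_e$, and then testing against $u_f$ and integrating by parts shows that $\sum_k b_k (-\Delta)^{s_k} w$ vanishes in $W_1$ (using that $q$ is real... more precisely, pairing with the adjoint DN map). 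Here, however, a subtlety arises: $w$ is a single function but the nonlocal operator is a sum of several fractional powers, so one cannot directly invoke the classical UCP. This is exactly where Theorem~\ref{thm: ent} enters. Since $w$ is compactly supported in $\overline\Omega$ (hence has super-exponential, indeed compact-support, decay), and $w|_{W_1}=0$ while $\sum_k b_k((-\Delta)^{s_k}w)|_{W_1}=0$, applying the entanglement principle with $u_1=\ldots=u_N=w$ (or, more carefully, noting the hypothesis \ref{exponent condition} on the $\{s_k\}$) forces $w\equiv 0$, hence $v=0$, giving density.

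With the Alessandrini identity and the Runge density in place, I would conclude: the density lets me choose $u_1,u_2$ approximating arbitrary $L^2(\Omega)$ functions, so $\int_\Omega (q_1-q_2)\varphi\,\psi\,dx = 0$ for a dense class of products $\varphi\psi$, which after a standard polarization/approximation argument forces $q_1-q_2 = 0$ a.e. in $\Omega$. The main obstacle is the Runge approximation step, specifically verifying that the function $w$ produced in the duality argument genuinely satisfies the hypotheses of the entanglement principle --- one must check that $w$ (or the combination of the several adjoint solutions that naturally appears, since the operator is $\sum_k b_k(-\Delta)^{s_k}$ rather than a genuine polyharmonic-in-$w$ object built from distinct functions) lies in the right Sobolev class $H^{-r}(\R^n)$, is supported in $\overline\Omega$, and that the vanishing of $\sum_k b_k((-\Delta)^{s_k}w)$ on $W_1$ actually follows from $\Lambda_{q_1}=\Lambda_{q_2}$ and the definition of the DN map on $W_2$ versus the test functions on $W_1$. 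The bookkeeping of which exterior set ($W_1$ vs.\ $W_2$) carries which vanishing condition, and the fact that we only have partial data, require care but no fundamentally new idea beyond what is already set up in Section~\ref{sec: preliminary} and Section~\ref{sec: exterior problem}.
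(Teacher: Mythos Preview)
Your proposal is correct and follows essentially the same approach as the paper: the integral identity \eqref{eq:integral identity}, then Runge approximation (Theorem~\ref{Thm: Runge}) proved by the Hahn--Banach duality argument with the entanglement principle replacing single-term UCP, then density to conclude $q_1=q_2$. The paper's execution differs only in minor bookkeeping --- it takes $u_1$ with exterior data in $W_1$ and $u_2$ with data in $W_2$ (rather than both in $W_1$), and in the final density step it approximates an arbitrary $\phi\in L^2(\Omega)$ by the first sequence and the constant function $1$ by the second, so no polarization is needed.
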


	\begin{remark}
    Note that when $N=1$, and the exponent $s_1$ belongs to $(0,1)$, the preceding theorem reduces to the known main result of  \cite{GSU20}. Let us also comment that by rewriting the nonlocal Schr\"odinger equation $P_q$ as 
		$$
		P_q := \psi(-\Delta) +q(x),
		$$
		where $\psi (\lambda)=\sum_{k=1}^N b_k \lambda^{s_k}$, we obtain a formulation of the inverse problem via the \emph{Bernstein function}\footnote{A function $f:(0,\infty) \to \R$ is a Bernstein function if $f\in C^\infty((0,\infty))$, $f(\lambda)\geq 0 $ for all $\lambda>0$, and $(-1)^k \frac{d^kf (\lambda)}{d\lambda^k} \geq 0$ for all $\lambda>0$ and for all $k\in \N$. A typical example of Bernstein functions is $f(\lambda)=\lambda^s$, for any $s\in (0,1)$. It is also known that $b_1f_1+b_2f_2$ is a Bernstein function for any constants $b_1,b_2>0$, provided $f_1$ and $f_2$ are Bernstein functions.}. Hence, Theorem \ref{Thm: global uniqueness 2} can be regarded as a generalization of the fractional Calder\'on type inverse problem associated with Bernstein-type operators which, to the best of our knowledge, has not been solved before. We refer the reader to \cite{KM18_Bernstein} for related studies for extension problems of complete Bernstein functions associated with the Laplace operator. Another related study was investigated in \cite{LLL_poly}.
	\end{remark}

	Theorem \ref{Thm: global uniqueness 2} can be proved by using the following Runge approximation property for solutions of \eqref{equ: main 2}, which itself involves the entanglement principle of Theorem \ref{thm: ent}. The Runge approximation property may be of independent interest in control theory. We prove this theorem in Section~\ref{sec: IP}.
	
	\begin{theorem}[Runge approximation]\label{Thm: Runge}
		Let $\Omega\subset \R^n$ be a bounded open set, and let $W\Subset \Omega_e$ be a bounded nonempty open set. Let $N \in \N$, $\{b_k\}_{k=1}^N \subset (0,\infty)$ and let $\{s_k\}_{k=1}^N$ satisfy \ref{exponent condition}. Let $q\in L^\infty(\Omega)$ satisfy \eqref{eigenvalue condition}. Then, given any function $g\in L^2(\Omega)$ and any $\eps>0$, there exists a solution $u=u_\eps\in H^{s_N}(\R^n)$ of equation \eqref{equ: main 2} for some exterior Dirichlet data $f=f_\epsilon\in C^{\infty}_0(W)$ such that 
		$
		\left\| u_\eps -g \right\|_{L^2(\Omega)}<\eps.$
	\end{theorem}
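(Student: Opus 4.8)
The plan is to derive the Runge approximation property by the standard Hahn--Banach duality argument, using the entanglement principle of Theorem~\ref{thm: ent} to establish the required injectivity statement for the adjoint. Let $P_q^*$ denote the formal adjoint of $P_q$; since $\{b_k\}_{k=1}^N\subset (0,\infty)$ are real and $(-\Delta)^{s_k}$ is self-adjoint, we have $P_q^*v = \sum_{k=1}^N b_k(-\Delta)^{s_k}v + \overline{q}\,v$, which has the same structure. First I would consider the set
$$ \mathcal R = \{ u_f|_{\Omega}\,:\, f\in C^{\infty}_0(W)\,,\ P_qu_f=0 \text{ in }\Omega\,,\ u_f\in H^{s_N}(\R^n)\text{ solves }\eqref{equ: main 2}\}\subset L^2(\Omega), $$
and note that it suffices to show $\mathcal R$ is dense in $L^2(\Omega)$. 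By the Hahn--Banach theorem this is equivalent to showing that if $g\in L^2(\Omega)$ satisfies $\langle u_f, g\rangle_{L^2(\Omega)}=0$ for all such $f$, then $g=0$.

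The second step is to introduce the dual solution. Extending $g$ by zero to $\R^n$, let $v\in H^{s_N}(\R^n)$ be the unique solution (using the eigenvalue condition \eqref{eigenvalue condition} applied to $P_q^*$, which holds since $0$ is not a Dirichlet eigenvalue of $P_q$ iff it is not one of $P_q^*$) of the exterior problem $P_q^* v = g$ in $\Omega$ with $v=0$ in $\Omega_e$. Integrating by parts (using the bilinear form associated to $P_q$, which is symmetric in the relevant sense because the $b_k$ are positive reals), one obtains for every admissible $f$,
$$ 0 = \langle u_f, g\rangle_{L^2(\Omega)} = \langle u_f, P_q^* v\rangle = \langle P_q u_f, v\rangle + \text{(exterior terms)} = \langle f, \text{something}\rangle_{W}, $$
where the exterior boundary-type terms reduce, after using $u_f=f$ in $\Omega_e$, $\operatorname{supp}f\subset W$, and $v=0$ in $\Omega_e$, to an expression testing $\sum_{k=1}^N b_k(-\Delta)^{s_k}v$ against $f$ on $W$. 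Since $f\in C^{\infty}_0(W)$ is arbitrary, this forces $\left.\big(\sum_{k=1}^N b_k(-\Delta)^{s_k}v\big)\right|_W = 0$. Combined with $v|_W=0$ (as $W\Subset\Omega_e$), we are precisely in the hypothesis of Theorem~\ref{thm: ent}: take $u_1=\ldots=u_N=v$, the open set $\mathcal O=W$, coefficients $b_k$, and exponents $\{s_k\}$ satisfying \ref{exponent condition}. Here $v$ is compactly supported in the sense that $\operatorname{supp}(P_q^*v - g)\subset\overline\Omega$ but $v$ itself need not be compactly supported; however $v\in H^{s_N}(\R^n)$ and decays, and in fact one checks $v$ has the requisite decay --- this is the point to be careful about. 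If $v$ does not a priori have super-exponential decay, one instead applies the entanglement principle on a version where the relevant distributions are compactly supported, exploiting that $\sum_k b_k(-\Delta)^{s_k}v$ is supported in $\overline\Omega$ and hence $v$ solves a homogeneous equation outside a bounded set; standard elliptic/nonlocal decay estimates for $\sum_k b_k(-\Delta)^{s_k}$ (a Bernstein-type operator) then yield the super-exponential — or at least Schwartz — decay needed, and the remark following Theorem~\ref{thm: ent} notes Schwartz decay suffices for all but one step. Granting this, Theorem~\ref{thm: ent} gives $v\equiv 0$ in $\R^n$, whence $g = P_q^*v = 0$, completing the argument.

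The main obstacle I anticipate is precisely verifying that the dual solution $v$ falls within the admissible class for Theorem~\ref{thm: ent} — namely establishing its decay at infinity. This requires quantifying the decay of solutions to $\sum_{k=1}^N b_k(-\Delta)^{s_k}v = g$ with $g$ compactly supported and $v\in H^{s_N}(\R^n)$. One route is via the Fourier representation $\widehat v(\xi) = \widehat g(\xi)/\psi(|\xi|^2)$ with $\psi(\lambda)=\sum_k b_k\lambda^{s_k}$ a Bernstein function; since $\widehat g$ is entire of exponential type (Paley--Wiener) and $\psi$ is positive on $(0,\infty)$, $\widehat v$ extends holomorphically off the (non-integer) branch locus and one reads off decay of $v$. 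A cleaner alternative, matching the philosophy of the paper, uses the heat semigroup / subordination representation of $\psi(-\Delta)^{-1}$ to get pointwise kernel bounds and hence the decay of $v$ directly. Once the decay is in hand, the rest is the routine duality machinery; the two genuinely nontrivial inputs are this decay estimate and the invocation of the entanglement principle, with the latter being exactly the step that upgrades ordinary unique continuation (insufficient here, since several fractional powers of the same $v$ appear) to the full Runge density.
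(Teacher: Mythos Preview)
Your approach is essentially the same as the paper's: Hahn--Banach duality, solve an adjoint problem with zero exterior data, integrate by parts via the sesquilinear form, and invoke the entanglement principle on the open set $W$. The duality computation you sketch is correct, and your choice of $\mathcal R=\{u_f|_\Omega\}$ is equivalent to the paper's $\{u_f-f\}$ restricted to $\Omega$, since $f\in C^\infty_0(W)$ vanishes on $\Omega$.

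However, the ``main obstacle'' you anticipate is a phantom. You construct $v$ as the solution of $P_q^*v=g$ in $\Omega$ with $v=0$ in $\Omega_e$. By definition of the exterior Dirichlet problem (Lemma~\ref{Lem: well-posedness}), this means $v\in\widetilde{H}^{s_N}(\Omega)$, which is the closure of $C^\infty_0(\Omega)$ in $H^{s_N}(\R^n)$; in particular $\supp v\subset\overline{\Omega}$, so $v$ is \emph{compactly supported}. The super-exponential decay hypothesis of Theorem~\ref{thm: ent} is therefore satisfied trivially, and your entire final paragraph---Paley--Wiener, Bernstein subordination, kernel bounds---is unnecessary. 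Your statement that ``$v$ itself need not be compactly supported'' is simply wrong: vanishing on the open set $\Omega_e=\R^n\setminus\overline{\Omega}$ forces $\supp v\subset\overline{\Omega}$. The paper makes exactly this observation in one line: ``the application of the entanglement principle is justified here as $\phi=0$ in $\Omega_e$, and thus in particular satisfies the super-exponential decay condition automatically.'' Once you recognize this, your proof is complete and coincides with the paper's.
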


    Apart from Theorems \ref{Thm: global uniqueness 1} and \ref{Thm: global uniqueness 2}, we would expect further applications of our entanglement principle in the study of inverse problems for systems of nonlocal equations. We leave this as a future direction of research.
	
	\subsection{Outline of the key ideas in the proof of Theorem~\ref{thm: ent}} \label{sec_outline_proof}
	
	Let us discuss some of the key ideas in the proof of Theorem~\ref{thm: ent} in Section~\ref{sec: entanglement}. Our starting point will be to show that the principle can be derived from an analogous statement for smooth functions with super-exponential decay at infinity, see the statement of Theorem~\ref{thm_ent_smooth} for this version of the theorem. This is not surprising as fractional Laplace operators commute with convolution operators on $\mathbb R^n$, as can be readily seen from their definition via Fourier transforms. The proof of the smooth case $\{u_k\}_{k=1}^N\subset C^{\infty}(\R^n)$  will then be divided into three main steps. \\
	
	\noindent {\bf Step I.} In the first step, we proceed to make a hidden connection between the analogue of equation \eqref{ent_u_cond} in the smooth case (see Theorem~\ref{thm_ent_smooth}) and the holomorphic function 
	$$F:\{z\in \C\,:\, \mathrm{Re}(z)\geq 0\} \to \C,$$ 
	that is (for now formally) defined via the expression
	\begin{equation}\label{F_intro} 
		F(z):=  \sum_{k=1}^N \frac{\Gamma(z+1+\alpha_k)}{\Gamma(-\alpha_k)\Gamma(1+\alpha_k)}\int_0^\infty (e^{t\Delta} \,u_k)(x) t^{-(z+1+\alpha_k)}\, dt\end{equation}
	where $\alpha_k\in (0,1)$, $k=1,\ldots,N$ is the fractional part of $s_k$ and $x$ is a fixed point inside $\mathcal O$. We will prove that \eqref{ent_u_cond} implies that the function $F(z)$ above must vanish on positive integers, that is to say, 
	\begin{equation}\label{F_m_intro} F(m)=0 \quad \text{for $m=1,2,\ldots$}.\end{equation}
	We will carefully analyze the well-posedness of the definition \eqref{F_intro} showing that it is indeed a holomorphic function of $z$ in the right half-plane. We will then derive precise bounds for its growth rates as $|z|\to \infty$, see Lemma~\ref{lem_analytic_bounds}. The remaining part of this first step is to establish Proposition~\ref{prop_F_zero}, showing that the only holomorphic function on $\left\{ z\in \C: \, \textrm{Re}(z)\geq 0\right\}$ that vanishes on positive integers and enjoys the growth rates of Lemma~\ref{lem_analytic_bounds} is the zero function. This part relies crucially on an interpolation theorem for holomorphic functions in the same spirit as Carlson's theorem in complex analysis. The version that we need here is due to Pila \cite{pila_05}, see Theorem~\ref{thm_Pila} for its statement. \\
	
	\noindent{{\bf Step II.}} Once we have established that $F(z)=0$ for all $z\in \{z\in \C\,:\,\textrm{Re}(z)\geq 0\}$, we aim to see what further information about the functions $u_k$, $k=1,\ldots,N$ may be inferred from it. Let us also comment that this is a key step that diverges from the approach in \cite{FKU24}. In \cite{FKU24} the authors showed that an analogous expression as $F(z)$ above appears on closed Riemannian manifolds. Subsequently, they showed that $F(z)$ in their setup is globally holomorphic away from nonpositive integers, thanks in part to the large time exponential decay of the heat semigroup $e^{t\Delta_g}$ when acting on $\Delta_g u$ with $u$ smooth (the operator $\Delta_g$ denotes the Laplace-Beltrami operator). This allowed them to perform singularity analysis near the poles of $F(z)$ and conclude that each of the functions $u_k$ must be zero in the case of closed Riemannian manifolds, see also \cite[Remark 1.9]{FKU24}.
	
	However, as it can be readily seen from the expression \eqref{heat_kernel} for the heat kernel on $\R^n$, the large time behaviour of the heat semigroup in $\R^n$ only has a polynomial decay of order $t^{-\frac{n}{2}}$ and thus the expression $F(z)$ will become divergent as one moves into the left half-plane $\textrm{Re}(z)\leq 0$. Nevertheless, we will prove that under Schwartz class decay for the functions $u_k$, it is possible to meromorphically continue the function $F(z)$ into the left half-plane. This is analogous to the well-known meromorphic continuation of the Gamma function to the left half-plane based on a recursive equation that it enjoys in the right half-plane, namely \eqref{recursion of Gamma function}. We refer the reader to Lemma~\ref{lem_meromorphic} for the expression of this meromorphic extension.

    Once this extension is obtained, we proceed to perform singularity analysis near its poles and show that this leads to disentanglement of the terms in the expression \eqref{ent_u_cond}. Assuming the condition \ref{exponent condition}, this leads us to show that the following specific moments must vanish for each fixed $x\in \mathcal O$ and each $m\in \N\cup \{0\}$,
	\begin{equation}\label{moments_intro} 
		\begin{cases}
			\int_{\R^n} u_k(y)\, |x-y|^{2m}\,dy =0,  &\text{if $n$ is even,}\\
			\int_{\R^n} u_k(y)\, |x-y|^{2m+1}\,dy =0,  &\text{if $n$ is odd}.
		\end{cases}
	\end{equation}
	We remark that in the case of odd dimension and if two exponents differ by an odd multiple of $\frac{1}{2}$, the singularity analysis becomes more complicated as some pairs of terms in \eqref{ent_u_cond} create a resonance effect leading to more complicated expressions. For the sake of clarity of our presentation, we decided to remove this possibility by the extra assumption in odd dimensions. We comment that up to the end of this step, only Schwartz class decay is needed.\\
	
	\noindent{{\bf Step III.}} The last step of our analysis is to show that the vanishing of the previous moments for each $x\in \mathcal O$ would imply that the functions $u_k$ must all vanish globally on $\R^n$. This is the only step of the proof where we need to impose more spatial decay on the functions $u_k$ than Schwartz class decay. Indeed, it seems we must have super-exponential decay to be able to conclude this step. Our proof relies on showing that under super-exponential decay, the vanishing of the previous moments implies that the spherical averages of $u_k$ must be zero over all spheres centered at the set $\mathcal O$. This step is based on the study of Fourier--Laplace transforms. The proof is then completed thanks to well-known support theorems for these geometrical Radon type transforms.
	
	\subsection{Organization of the paper}
	The paper is organized as follows. In Section \ref{sec: preliminary}, we introduce basic notions used in this work and prove the solvability and well-posedness for \eqref{equ: main} and \eqref{equ: main 2}, respectively. In Section \ref{sec: entanglement}, we derive the entanglement principle, involving several tools including analytic interpolation as well as reduction to spherical mean transforms. In Section~\ref{sec: IP}, we apply the entanglement principle to show the global uniqueness results for \ref{Q:IP1}--\ref{Q:IP2} as well as proving the Runge approximation property for solutions to \eqref{equ: main_solv}.
	
	\section{Preliminaries}\label{sec: preliminary}
\subsection{Function spaces}
	\label{sec: preliminary: fcn}
    We briefly discuss the notations for the weighted Sobolev spaces as well as the notation of fractional Sobolev spaces.

    \subsubsection{Weighted Sobolev spaces}
Following the notations in \cite{4b44167c-d575-3418-b803-b8adaeefcb60, McOwen1979TheBO}, we define for $\delta\in \R$ the weighted Sobolev space $L^2_\delta(\R^n)$ as the space of measurable functions $u\in L^2_{\textrm{loc}}(\R^n)$ such that the norm
\begin{equation}\label{def_weight_sobolev}
\|u\|_{L^2_\delta(\R^n)}= \left(\int_{\R^n} (1+|x|^2)^{\delta}\,|u(x)|^2\,dx \right)^{\frac{1}{2}}
\end{equation}
is finite. Next, given any $k=0,1,\ldots$ and any $\delta\in \R$ we define $W^{k,2}_\delta(\R^n)$ in an analogous way corresponding to the norms
\begin{equation}\label{def_weight_sobolev_derivative}
\|u\|_{W^{k,2}_\delta(\R^n)}= \sum_{j=0}^k \sum_{|\beta|=j}\|D^{\beta}u\|_{L^2_{\delta+j}(\R^n)},
\end{equation}
where the second summation is taken over multi-indexes $\beta\in (\N\cup\{0\})^n$ with $|\beta|=\beta_1+\ldots+\beta_n=j$ and we have $D^\beta = \frac{\p^{|\beta|}}{\p x_1^{\beta_1}\ldots\, \p x_n^{\beta_n}}.$ The spaces $W^{k,2}_\delta(U)$ are to be understood similarly for any open set $U\subset \R^n$. Let us also mention that the notations $W^{k,2}(\R^n)=H^{k}(\R^n)$ for $k\in \N$ and the notation $W^{0,2}(\R^n)=L^2(\R^n)$ will be reserved for the standard Sobolev spaces, not to be confused with the weighted Sobolev spaces above. 



The above weighted Sobolev spaces will be key when it comes to discussing the structure of the Cauchy dataset $\mathcal C_A$. In particular, we will need to use the following elliptic regularity result of \cite{Bartnik1986TheMO} that we recall here. We caution the reader that the notations of Bartnik for weighted Sobolev spaces are slightly different from the standard notation \eqref{def_weight_sobolev}-\eqref{def_weight_sobolev_derivative}. Thus, for the sake of the reader's convenience, we will state the lemma here, adjusted to fit our notation.

\begin{lemma}(Elliptic regularity, cf. Proposition 1.6 in \cite{Bartnik1986TheMO})
	\label{lem_elliptic_regularity_weighted}
	Let $n\geq 2$ and let $A$ be a real-valued positive definite symmetric matrix that satisfies \eqref{matrix_iden}. Let $\mathcal L_0:= \nabla\cdot(A(x)\nabla)$ and finally let $\delta\in \R$.  If $u\in L^{2}_{\delta}(\R^n)$ and $\mathcal L_0 u \in L^{2}_{\delta+2}(\R^n)$, then $u \in W^{2,2}_{\delta}(\R^n)$ and there holds
	$$ \|u\|_{W^{2,2}_{\delta}(\R^n)} \leq C \left(  \|\mathcal L_0 u\|_{L^{2}_{\delta+2}(\R^n)} + \|u\|_{L^{2}_{\delta}(\R^n)}  \right),$$
	for some $C>0$ independent of $u$.
\end{lemma}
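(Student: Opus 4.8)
The plan is to split $\R^n$ into a fixed large ball containing $\overline\Omega$, where $\mathcal L_0$ is a uniformly elliptic operator with smooth coefficients, and its complement, where $A\equiv\mathds{1}_{n\times n}$ by \eqref{matrix_iden} so that $\mathcal L_0 u=\Delta u$, and on which we run a dyadic scaling argument for the Laplacian. First I would note that no density/approximation argument is needed: since $A$ is smooth and uniformly elliptic and $\mathcal L_0 u\in L^2_{\mathrm{loc}}(\R^n)$, classical interior $L^2$ elliptic regularity already gives $u\in W^{2,2}_{\mathrm{loc}}(\R^n)$, so every integral occurring below is finite on compact sets; the only content of the lemma is the \emph{global weighted} bound. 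Fix $r_0>0$ with $\overline\Omega\subset B_{r_0}(0)$ and an integer $j_0$ with $2^{j_0-1}\ge r_0$.

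For the bounded region, interior elliptic regularity for $\mathcal L_0$ on $B_{2^{j_0+2}}(0)$ gives $\|u\|_{W^{2,2}(B_{2^{j_0+1}})}\le C(\|\mathcal L_0 u\|_{L^2(B_{2^{j_0+2}})}+\|u\|_{L^2(B_{2^{j_0+2}})})$; on a fixed ball the weights $(1+|x|^2)^{\delta+k}$ are bounded above and below by positive constants, so this upgrades at once to $\|u\|_{W^{2,2}_\delta(B_{2^{j_0+1}})}\le C(\|\mathcal L_0 u\|_{L^2_{\delta+2}(\R^n)}+\|u\|_{L^2_\delta(\R^n)})$. For the region near infinity, set for $j\ge j_0$ the dyadic annulus $A_j=\{2^{j}\le|x|\le 2^{j+1}\}$ and the enlarged annulus $\widetilde A_j=\{2^{j-1}\le|x|\le 2^{j+2}\}\subset\Omega_e$. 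Rescaling $x=Ry$ with $R=2^j$ maps $A_j$ and $\widetilde A_j$ onto fixed annuli independent of $j$; applying the standard interior estimate for $\Delta$ to $\tilde u(y):=u(Ry)$ on these fixed annuli, using $\Delta_y\tilde u=R^2(\Delta u)(Ry)$ together with the Jacobian $R^n$ of the dilation, and scaling back, yields a constant $C$ independent of $j$ such that
\[
R^{\delta+2}\|D^2u\|_{L^2(A_j)} + R^{\delta+1}\|\nabla u\|_{L^2(A_j)} + R^{\delta}\|u\|_{L^2(A_j)} \le C\left( R^{\delta+2}\|\mathcal L_0 u\|_{L^2(\widetilde A_j)} + R^{\delta}\|u\|_{L^2(\widetilde A_j)}\right),\qquad R=2^j .
\]
Since $|x|\sim 2^j$ on $A_j$ and on $\widetilde A_j$, so that $(1+|x|^2)^{\mu}\sim 2^{2j\mu}$ there, the left-hand side is comparable to $\|u\|_{W^{2,2}_\delta(A_j)}$ and the right-hand side to $\|\mathcal L_0 u\|_{L^2_{\delta+2}(\widetilde A_j)}+\|u\|_{L^2_\delta(\widetilde A_j)}$, the norms being the restrictions of \eqref{def_weight_sobolev_derivative}.

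It remains to sum. Squaring the displayed estimate, summing over $j\ge j_0$, and using that the family $\{\widetilde A_j\}_{j\ge j_0}$ has bounded overlap (each point of $\R^n\setminus B_{2^{j_0-1}}$ lies in at most a fixed number of them), the right-hand side collapses to $C(\|\mathcal L_0 u\|^2_{L^2_{\delta+2}(\R^n)}+\|u\|^2_{L^2_\delta(\R^n)})$, while the left-hand side, the $A_j$ being essentially disjoint and covering $\R^n\setminus B_{2^{j_0}}$, controls $\|u\|^2_{W^{2,2}_\delta(\R^n\setminus B_{2^{j_0+1}})}$. Combining with the bounded-region estimate gives $u\in W^{2,2}_\delta(\R^n)$ together with the asserted bound.

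The routine part is the interior regularity and the summation; the main technical point to get right is the bookkeeping in the scaling step, namely tracking the powers of the dilation factor $2^j$ against the growth rate $(1+|x|^2)^{\delta}$ of the weight so that the scale-invariant elliptic estimate on the unit annulus reassembles, after summing over dyadic scales, into precisely the weighted $W^{2,2}_\delta$-norm. Alternatively, one may simply invoke Proposition~1.6 of \cite{Bartnik1986TheMO} directly, after translating between the two weight conventions, which is the route taken here.
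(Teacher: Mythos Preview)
The paper does not supply its own proof of this lemma; it is stated as a direct citation of Proposition~1.6 in \cite{Bartnik1986TheMO}, merely rephrased in the paper's weight conventions. Your proposal correctly anticipates this in its final sentence.

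That said, the self-contained argument you sketch is correct and is essentially the standard proof (and the one Bartnik gives): interior $W^{2,2}$ regularity on a fixed large ball containing the region where $A\not\equiv\mathds{1}$, combined with a dyadic scaling argument for $\Delta$ on the exterior, summed using the bounded overlap of the enlarged annuli. Your bookkeeping of the dilation powers against the weight exponents is right, matching the definition \eqref{def_weight_sobolev_derivative} in which the $j$th derivative carries weight $(1+|x|^2)^{\delta+j}$. So there is nothing to correct; the paper simply opts to cite rather than reproduce this classical estimate.
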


\subsubsection{Fractional Sobolev spaces} 
	Recall that the fractional Laplacian $(-\Delta)^s$, $s\geq 0$, is given via the Fourier transform as follows.
	\begin{equation}
		(-\Delta)^s u=\mathcal{F}^{-1}\left\{ \abs{\xi}^{2s}\mathcal{F}u(\xi)\right\}, \quad \text{for }u\in \mathcal{S}(\R^n),
	\end{equation}
	where $\mathcal{F}$ and $\mathcal{F}^{-1}$ denote the Fourier and inverse Fourier transform, respectively. For the functional spaces, we write $H^s (\R^n ) = W^{s,2}(\R^n)$ for the $L^2$-based Sobolev space with norm
	\begin{equation}
		\norm{u}_{H^s(\R^n)} = \left\| \langle D \rangle^s \mathcal F u\right\|_{L^2(\R^n)},
	\end{equation}
	for any $s\in \R$, where $\langle \xi \rangle =\LC1+\abs{\xi}^2\RC^{1/2}$. Given a nonempty open set $U\subset \R^{n}$, the space $C_0^{\infty}(U)$ consists of all $C^{\infty}(\mathbb{R}^{n})$-smooth functions with compact support in $U$. Analogously as in \cite{GSU20}, we adopt for each $s\in \R$, the notations 
	\begin{align*}
		H^{s}(U) & :=\left\{u|_{U}: \, u\in H^{s}(\R^{n})\right\},\\
		\wt H^{s}(U) & :=\text{closure of \ensuremath{C_{0}^{\infty}(U)} in \ensuremath{H^{s}(\R^{n})}},\\
		H_{0}^{s}(U) & :=\text{closure of \ensuremath{C_{0}^{\infty}(U)} in \ensuremath{H^{s}(U)}}.
	\end{align*}
	The space $H^{s}(U)$ is complete in the sense that
	\[
	\|u\|_{H^{s}(U)}:=\inf\left\{ \|v\|_{H^{s}(\mathbb{R}^{n})}: \, v\in H^{s}(\mathbb{R}^{n})\mbox{ and }v|_{U}=u\right\} .
	\]
	The space $H^{-s}(U)$, with any $s\in \R$, may be viewed as the topological dual space of $\wt H^s(U)$.
	We also we use the notation $\langle v, w\rangle_{H^{-s}(U),\widetilde H^s(U)},$ to denote the continuous extension of 
	$$ (v,w)_{L^2(U)}= \int_{U} v(x)\,\overline{w}(x)\,dx \qquad \forall\, v,w \in C^{\infty}_0(U),$$
	as a sesquilinear form to all of $H^{-s}(U) \times \widetilde H^{s}(U).$ We also recall a mapping property for the fractional Laplacian.
	\begin{lemma}[Lemma 2.1 in \cite{GSU20}]\label{Lem: mapping prop of frac Lap}
		For $s\geq 0$, the fractional Laplacian extends as a bounded map
		\begin{equation}
			(-\Delta)^s : H^a(\R^n)\to H^{a-2s}(\R^n), \text{ for }a\in \R.
		\end{equation}
	\end{lemma}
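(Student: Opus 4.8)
The statement to prove is Lemma~\ref{Lem: mapping prop of frac Lap}, namely that for $s\geq 0$ the fractional Laplacian $(-\Delta)^s$ extends to a bounded operator $H^a(\R^n)\to H^{a-2s}(\R^n)$ for every $a\in\R$. The plan is to argue entirely on the Fourier side, since the spaces $H^a(\R^n)$ are defined via the weighted $L^2$ norm $\|u\|_{H^a}=\|\langle D\rangle^a\mathcal F u\|_{L^2}$ and $(-\Delta)^s$ is the Fourier multiplier with symbol $|\xi|^{2s}$. First I would recall that $(-\Delta)^s u=\mathcal F^{-1}\{|\xi|^{2s}\mathcal F u\}$ on the Schwartz class $\mathcal S(\R^n)$, and that $\mathcal S(\R^n)$ is dense in $H^a(\R^n)$ for every $a\in\R$; hence it suffices to establish the norm bound $\|(-\Delta)^s u\|_{H^{a-2s}(\R^n)}\leq C\|u\|_{H^a(\R^n)}$ for all $u\in\mathcal S(\R^n)$, and then extend by density and continuity. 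Unwinding the definitions, this inequality is equivalent to
\begin{equation}
\left\| \langle\xi\rangle^{a-2s}\,|\xi|^{2s}\,\mathcal F u(\xi)\right\|_{L^2(\R^n)} \leq C \left\|\langle\xi\rangle^{a}\,\mathcal F u(\xi)\right\|_{L^2(\R^n)},
\end{equation}
so the whole lemma reduces to the pointwise symbol estimate
\begin{equation}
\langle\xi\rangle^{a-2s}\,|\xi|^{2s} \leq C\,\langle\xi\rangle^{a}\qquad\text{for all }\xi\in\R^n,
\end{equation}
with $C$ depending only on $s$ (and trivially on $a$ through the rearrangement), after which one multiplies both sides by $|\mathcal F u(\xi)|$ and integrates the square over $\R^n$.

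The remaining task is therefore the elementary inequality $|\xi|^{2s}\leq C\langle\xi\rangle^{2s}$ for all $\xi$, since dividing the displayed symbol estimate by $\langle\xi\rangle^{a-2s}$ shows it is equivalent to exactly this. That bound is immediate: for $|\xi|\leq 1$ we have $|\xi|^{2s}\leq 1\leq\langle\xi\rangle^{2s}$, while for $|\xi|\geq 1$ we have $|\xi|^{2s}\leq\langle\xi\rangle^{2s}$ directly from $|\xi|\leq\langle\xi\rangle=(1+|\xi|^2)^{1/2}$; so one may take $C=1$. Feeding this back in, $\langle\xi\rangle^{a-2s}|\xi|^{2s}\leq\langle\xi\rangle^{a-2s}\langle\xi\rangle^{2s}=\langle\xi\rangle^{a}$, which is the desired symbol bound with $C=1$. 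Consequently $\|(-\Delta)^s u\|_{H^{a-2s}(\R^n)}\leq\|u\|_{H^a(\R^n)}$ on $\mathcal S(\R^n)$, and since $(-\Delta)^s$ is thus uniformly continuous from a dense subspace of $H^a(\R^n)$ into the complete space $H^{a-2s}(\R^n)$, it admits a unique bounded linear extension to all of $H^a(\R^n)$ with the same norm bound. One should also note in passing that this extension agrees with the Fourier-multiplier definition on all of $H^a(\R^n)$, i.e. $\widehat{(-\Delta)^s u}=|\xi|^{2s}\hat u$ holds as an identity of tempered distributions (the right side being locally integrable and of the correct polynomial growth whenever $\hat u\in L^2_{\langle\xi\rangle^{2a}}$), which makes the extension canonical and independent of the approximating sequence.

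I do not anticipate a genuine obstacle here — the lemma is a standard Plancherel/Fourier-multiplier fact and the only mild subtlety is purely bookkeeping: being careful that $a-2s$ can be negative and that $s$ is allowed to be any nonnegative real (so $|\xi|^{2s}$ need not be smooth at the origin, but it is continuous there for $s>0$ and identically $1$ for $s=0$, hence always locally bounded, which is all that is used). If one wished to be maximally self-contained one would also remark that $|\xi|^{2s}\mathcal F u\in L^2_{\langle\xi\rangle^{2(a-2s)}}$ precisely because of the symbol estimate just proved, so the output genuinely lies in $H^{a-2s}(\R^n)$ rather than merely in some space of distributions; but this is exactly the content of the displayed norm inequality and requires no further work. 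Thus the proof is essentially one line on the Fourier side plus the trivial elementary inequality $|\xi|\leq\langle\xi\rangle$.
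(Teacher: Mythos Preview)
Your argument is correct and is exactly the standard Fourier-multiplier proof one would expect. Note that the paper does not actually prove this lemma; it is simply quoted as Lemma~2.1 of \cite{GSU20}, so there is no in-paper proof to compare against, but your approach is the natural one and matches how such results are established in the literature.
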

	
	\subsection{Nonlocal operators defined via the heat semigroup}
	
	For values $s\in (0,1)$, there are several equivalent definitions for fractional Laplace operator $(-\Delta)^s$. Here, let us use the heat semigroup definition which will be suitable for our analysis. We first recall the definition of the heat semigroup. Let
	\begin{align}\label{heat_kernel}
		p_t(y):=\frac{1}{(4\pi t)^{n/2}}e^{-\frac{|y|^2}{4t}}, \text{ for }y\in \R^n \text{ and }t>0, 
	\end{align}
	be the heat kernel of the heat operator $\p _t -\Delta$ for $(t,x)\in (0,\infty)\times \R^n$. 
Let $u\in H^s(\R^n)$, and define $e^{t\Delta}u\colon [0,\infty)\times \R^n\to\C$ by
	\begin{align}\label{heat kernel representation}
		e^{t\Delta}u(x):=\int_{\R^n} p_t(x-y)u(y)\, dy\quad \text{for}\quad t>0
	\end{align}
	and also $e^{0\Delta}u(x):=u(x)$. Then $	e^{t\Delta}u \in C^{\infty}([0,\infty);H^s(\R^n))$ is the unique solution to 
	\begin{align}\label{heat equation}
		\begin{cases}
			\LC \p_t -\Delta \RC  U=0 &\text{ in }(0,\infty) \times \R^n,\\
			U(x,0)=u(x)&\text{ in }\R^n.
		\end{cases}
	\end{align}
	It is well known that
	\begin{align}
		\left\|	e^{t\Delta}u\right\|_{H^s(\R^n)}\leq \norm{u}_{H^s(\R^n)}, \text{ for }t\geq 0.
	\end{align}

	Next, for specific values $s\in (0,1)$, we recall the well-known equivalent expression for the fractional Laplacian given via
	\begin{equation}\label{fractional heat semi}
		\LC -\Delta \RC ^s u(x) =\frac{1}{\Gamma (-s)}\int_0^\infty \frac{e^{t\Delta} u(x)-u(x)}{t^{1+s}}\, dt,
	\end{equation}
	where $\Gamma(\cdot)$ is the Gamma function defined by 
	\begin{equation}\label{Gamma function}
		\Gamma(z)=\int_0^\infty e^{-t}t^{z-1}\, dt, \quad \text{for $\textrm{Re}(z)>0$.} 
	\end{equation}
	As the Gamma function plays an essential role in our analysis, let us also mention the recursion formula 
	\begin{equation}\label{recursion of Gamma function}
		\Gamma(z+1)=z\,\Gamma(z)\quad \text{or}\quad \Gamma(z)=\frac{\Gamma(z+1)}{z} \quad \text{$\textrm{Re}(z)>0$}.
	\end{equation}
	Indeed, the above recursion formula is important as it allows one to extend the Gamma function as a holomorphic function to all of the complex plane $\C$ except at nonpositive integers where the extended function will have simple poles. Throughout the remainder of this paper, the Gamma function $\Gamma$  is to be understood in this extended sense. Let us close this section by noting that several other equivalent definitions of the fractional Laplace operator are known, see for example the survey article \cite{Kwansnicki17} for ten equivalent definitions.

	\subsection{On the Cauchy dataset $\mathcal C_A$}
	\label{sec: C_q}
	
    In order to prove Theorem~\ref{Thm: global uniqueness 1}, we first need to show that the Cauchy dataset $\mathcal C_A$ makes sense, that is to say, it is not empty and possesses enough elements for us to study the inverse problem.  In fact, as we will see in a moment, this set is infinite-dimensional and we can precisely categorize many of its elements, sufficient for our purposes of solving \ref{Q:IP1}. To this end, let us discuss the Poisson equation 
	\begin{equation}\label{equ: main_solv}
		  L_A u =F \qquad \text{ in }\R^n,
	\end{equation}
	with  $F\in L^2_{\delta_0+2}(\R^n)$, where $\delta_0$ is as in \eqref{delta_0} and is fixed throughout this manuscript and we refer the reader to Section~\ref{sec: preliminary: fcn} for the definition of weighted Sobolev spaces. Later, when it comes to discussing the Cauchy dataset $\mathcal C_A$, we will naturally impose that $\supp F \subset \overline{\Omega_e}$ where we recall that $\Omega_e=\R^n\setminus \overline\Omega$. To solve the inverse problem \ref{Q:IP1}, it suffices for us to only work with $F \in C^{\infty}_0(\Omega_e)$.  
	
	\begin{remark}
		\label{remark_fractional_domain_weight}
		Let us make the remark that given any $\delta\in \R$, the operator $L_A: W^{2,2}_\delta(\R^n) \to L^2_{\delta+2}(\R^n)$ defined by \eqref{equ: main} is a bounded linear operator. Recall that $L_A$ has a local divergence part for which this mapping property is obvious. Considering next the fractional polyharmonic part given by  $P_0$ given by \eqref{P_0_def}, we note that given any $u\in W^{2,2}_\delta(\R^n)$, we have that $p_k u \in H^2(\R^n)$ for $k=1,\ldots,N$, as the functions $p_k$ are all smooth and with compact support. Thus, using again the fact that $p_k$'s are compactly supported together with the mapping property for fractional Laplace operators in Lemma~\ref{Lem: mapping prop of frac Lap}, we deduce that $p_k (-\Delta)^{s_k} (p_ku) \in L^2_{\delta'}(\R^n)$ for any $\delta'\in \R$. Recall that $\{s_k\}_{k=1}^N \subset (0,\frac{1}{2}].$ 
	\end{remark}

	In the following proposition, it is useful to note that given any $\delta\in \R$, the topological dual of $L^2_{\delta}(\R^n)$ is $L^2_{-\delta}(\R^n)$ and also that the adjoint of $L_A:W^{2,2}_{\delta}(\R^n)\to L^2_{\delta+2}(\R^n)$ is given by $\overline{L_A}: L^{2}_{-\delta-2}(\R^n) \to (W^{2,2}_{\delta}(\R^n))^\star$. Note also that for $n\geq3$, we have for any $\delta\in \R$,
	\begin{equation}\label{delta_0_property}
	 \delta \in \left( -\frac{n}{2}, \frac{n}{2}-2\right)  \iff   -\delta-2 \in \left( -\frac{n}{2}, \frac{n}{2}-2\right).\end{equation}
    
	\begin{proposition}[Solvability of Poisson equation]\label{prop_solvability}
	Let $n\geq 3$. Let $A$ be a smooth positive definite real-valued symmetric matrix on $\R^n$ that satisfies \eqref{matrix_iden}. Let $N \in \N$, let $\{p_k\}_{k=1}^N \subset C^\infty_0(\R^n)$ satisfy \eqref{p_k_def} and finally let $\{s_k\}_{k=1}^N \subset (0,\frac{1}{2}]$.  Let $\delta_0$ be as in \eqref{delta_0}. The operator
		\begin{equation}\label{mapping solvable}
			L_A:  W^{2,2}_{\delta_0}(\R^n) \to L^2_{\delta_0+2}(\R^n)
		\end{equation}
		defined in \eqref{equ: main} is Fredholm with index zero. Defining
		\begin{align}\label{K_A_def}
			\mathcal{K}_{A}:=\left\{ u\in W^{2,2}_{\delta_0}(\R^n): \, L_A u=0 \text{ in }\R^n\right\},
		\end{align}
	we have the inclusions
	\begin{equation}\label{K_A_reg}
		\mathcal K_A \subset W^{k,2}_{-2-\delta_0}(\R^n) \cap W^{k,2}_{\delta_0}(\R^n) \quad \text{for any $k\in \{0\}\cup \N$.}
	\end{equation} 
		There are two mutually exclusive possibilities: 
		\begin{enumerate}[(i)]
			\item $\mathcal{K}_A=\{0\}$. In this case, for $F \in L^2_{\delta_0+2}(\R^n)$, the Poisson equation \eqref{equ: main_solv} has a unique solution $u\in W^{2,2}_{\delta_0}(\R^n)$.
			
			\item $\dim(\mathcal{K}_A)=m$, for some $m\in \N$. In this case, for $F \in L^2_{\delta_0+2}(\R^n)$, the Poisson equation \eqref{equ: main_solv} admits some solution $u\in W^{2,2}_{\delta_0}(\R^n)$ if and only if 
            \begin{equation}\label{phi_ortho} \left(F, \overline{\zeta}\right)_{L^{2}(\R^n)}=0 \qquad \forall\, \zeta \in \mathcal K_A.\end{equation}
		\end{enumerate}
       	 Moreover, if $F\in C^{\infty}_0(\R^n)$ satisfies \eqref{phi_ortho} then any solution $u\in W^{2,2}_{\delta_0}(\R^n)$ to the Poisson equation \eqref{equ: main_solv} will also be in $W^{k,2}_{\delta_0}(\R^n)\cap W^{k,2}_{-\delta_0-2}(\R^n)$ for all $k\in \N$ and thus in particular globally smooth in $\R^n$.
	\end{proposition}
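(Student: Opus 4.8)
\textbf{Proof strategy for Proposition~\ref{prop_solvability}.} The plan is to decompose $L_A$ as a compact perturbation of an isomorphism and then invoke the standard Fredholm theory together with elliptic regularity. First I would write $L_A = \mathcal L_0 + P_0$, where $\mathcal L_0 u = -\nabla\cdot(A(x)\nabla u)$ and $P_0$ is the fractional polyharmonic perturbation from \eqref{P_0_def}. By the classical theory of the Laplacian on weighted Sobolev spaces (see \cite{McOwen1979TheBO, Bartnik1986TheMO}), for $n\geq 3$ and $\delta_0 \in \left(-\frac{n}{2}, \frac{n}{2}-2\right)$, the flat Laplacian $-\Delta : W^{2,2}_{\delta_0}(\R^n) \to L^2_{\delta_0+2}(\R^n)$ is an isomorphism. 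Since $A$ equals the identity outside the bounded set $\overline\Omega$, the difference $\mathcal L_0 - (-\Delta)$ has coefficients supported in $\overline\Omega$, so by Rellich-type compactness (on the bounded region) and ellipticity, $\mathcal L_0 : W^{2,2}_{\delta_0}(\R^n)\to L^2_{\delta_0+2}(\R^n)$ is a compact perturbation of an isomorphism, hence Fredholm of index zero. Next, by Remark~\ref{remark_fractional_domain_weight}, each summand $p_k(-\Delta)^{s_k}(p_k\,\cdot\,)$ maps $W^{2,2}_{\delta_0}(\R^n)$ into $L^2_{\delta'}(\R^n)$ for \emph{any} $\delta'\in\R$ (using $\{s_k\}\subset(0,\tfrac12]$ and compact support of the $p_k$), and since the image functions are supported in a fixed compact set and gain smoothness, the inclusion into $L^2_{\delta_0+2}(\R^n)$ is compact. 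Therefore $P_0$ is a compact operator, and $L_A = \mathcal L_0 + P_0$ is Fredholm with index zero.

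Having established Fredholmness of index zero, the dichotomy between (i) and (ii) is automatic: either $\mathcal K_A = \ker L_A = \{0\}$, in which case $L_A$ is an isomorphism and \eqref{equ: main_solv} is uniquely solvable, or $\dim \mathcal K_A = m \geq 1$, in which case (by index zero) the cokernel also has dimension $m$ and the range of $L_A$ is the orthogonal complement of $\ker(\overline{L_A})$. I would identify $\ker(\overline{L_A})$ with $\mathcal K_A$ itself: using the property \eqref{delta_0_property}, the adjoint $\overline{L_A}: L^2_{-\delta_0-2}(\R^n)\to (W^{2,2}_{\delta_0}(\R^n))^\star$ restricts to an operator of the same structural type on the weight $-\delta_0-2$, which also lies in the admissible window; moreover $\mathcal L_0$ is formally self-adjoint (divergence form, $A$ symmetric) and the perturbation $P_0u = \sum p_k(-\Delta)^{s_k}(p_k u)$ is formally self-adjoint as well since $(-\Delta)^{s_k}$ is self-adjoint and conjugation by multiplication by $p_k$ preserves this. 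Hence the solvability condition becomes exactly \eqref{phi_ortho}.

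It remains to prove the regularity claims \eqref{K_A_reg} and the final smoothness statement. For $u\in\mathcal K_A$ we have $\mathcal L_0 u = -P_0 u$, and the right-hand side lies in $L^2_{\delta'}(\R^n)$ for every $\delta'\in\R$ by Remark~\ref{remark_fractional_domain_weight}; in particular it lies in $L^2_{\delta_0+2}(\R^n)$ and also in $L^2_{-\delta_0}(\R^n)$ (since $-\delta_0-2$ is in the admissible window, so $-\delta_0$ plays the role of ``$\delta+2$'' there). Applying the elliptic regularity estimate of Lemma~\ref{lem_elliptic_regularity_weighted} with $\delta = \delta_0$ and then with $\delta = -\delta_0-2$ gives $u\in W^{2,2}_{\delta_0}(\R^n)\cap W^{2,2}_{-\delta_0-2}(\R^n)$. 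To bootstrap to arbitrary $k$, observe that once $u$ is known to lie in a better weighted space, $P_0 u$ — being a compactly supported function that is smooth up to any finite order after the action of $(-\Delta)^{s_k}$ on an $H^2$ function multiplied by a cutoff, and actually smooth since one can iterate — continues to lie in every $L^2_{\delta'}$; iterating Lemma~\ref{lem_elliptic_regularity_weighted} (or, more directly, applying standard interior elliptic regularity away from $\partial\Omega$ and the weighted version near infinity, noting $A$ is smooth) raises the number of derivatives without degrading the weight, yielding $u\in W^{k,2}_{\delta_0}(\R^n)\cap W^{k,2}_{-\delta_0-2}(\R^n)$ for all $k$, hence $u\in C^\infty(\R^n)$ by Sobolev embedding. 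The identical argument applied to a solution $u$ of the inhomogeneous equation $L_A u = F$ with $F\in C^\infty_0(\R^n)$ satisfying \eqref{phi_ortho} gives the last assertion, since then $\mathcal L_0 u = F - P_0 u$ has the same good properties.

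\textbf{Anticipated main obstacle.} The routine points are the Fredholm bookkeeping and the identification of the orthogonality condition; the step requiring the most care is verifying that the adjoint $\overline{L_A}$ genuinely falls under the same weighted-Sobolev isomorphism theory so that its kernel is finite-dimensional and equals $\mathcal K_A$ — this is where the symmetry \eqref{delta_0_property} of the weight window is essential and where one must be careful that the nonlocal term, although not a differential operator, is still a legitimate compact operator on the \emph{dual} weighted spaces. A secondary technical point is ensuring the regularity bootstrap does not stall: one must confirm that the compactly supported functions $p_k(-\Delta)^{s_k}(p_k u)$ really are smooth (not merely in every $L^2_{\delta'}$), which follows because $(-\Delta)^{s_k}(p_k u)$ is smooth in the interior of a neighborhood where it is being cut off — more precisely, $p_k u\in H^2$, so $(-\Delta)^{s_k}(p_k u)\in H^{2-2s_k}\subset H^1$, and after multiplication by the cutoff $p_k$ and re-substitution the equation $\mathcal L_0 u = -P_0 u$ with a smooth elliptic operator $\mathcal L_0$ lets one iterate interior elliptic regularity to all orders.
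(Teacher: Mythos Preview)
Your overall strategy---write $L_A = \mathcal L_0 + P_0$, show Fredholmness of index zero, then bootstrap regularity via Lemma~\ref{lem_elliptic_regularity_weighted}---matches the paper's, and your treatment of the compactness of $P_0$ (factoring through $H^1_0(U)$ for a bounded $U$ and invoking Rellich) is exactly what the paper does. The regularity bootstrap and the identification of the adjoint kernel via the weight symmetry \eqref{delta_0_property} are also in line with the paper's sketch.

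There is, however, a genuine gap in your treatment of $\mathcal L_0$. You assert that $\mathcal L_0 - (-\Delta) = -\nabla\cdot((A-I)\nabla\,\cdot\,)$ is compact from $W^{2,2}_{\delta_0}(\R^n)$ to $L^2_{\delta_0+2}(\R^n)$ ``by Rellich-type compactness.'' This is false: the perturbation is a \emph{second-order} differential operator, so it uses up all available derivatives and does not gain the extra regularity needed for Rellich to apply. A bounded sequence in $W^{2,2}_{\delta_0}$ restricted to a neighbourhood $\Omega'$ of $\overline\Omega$ is bounded in $H^2(\Omega')$, but $-\nabla\cdot((A-I)\nabla\,\cdot\,):H^2(\Omega')\to L^2(\Omega)$ is merely bounded, not compact. (Contrast this with $P_0$, where $s_k\le\tfrac12$ genuinely leaves one derivative to spare.) The paper sidesteps this entirely: rather than treating $\mathcal L_0$ as a perturbation of $-\Delta$, it invokes \cite[Proposition~1.15]{Bartnik1986TheMO} directly to conclude that $\mathcal L_0:W^{2,2}_{\delta_0}(\R^n)\to L^2_{\delta_0+2}(\R^n)$ is itself an \emph{isomorphism} for $\delta_0$ in the non-exceptional range \eqref{delta_0}. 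You should replace your compact-perturbation step for $\mathcal L_0$ with this citation; the rest of your argument then goes through.
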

\begin{remark}
    Let us remark that the appearance of the conjugate $\overline{\zeta}$ in \eqref{phi_ortho} above is because we are using sesquilinear forms and that the (formal) adjoint of $L_A$ is $\overline{L_A}$. Note also that in case (ii) above, if $u\in W^{2,2}_{\delta_0}(\R^n)$ is a solution to the equation \eqref{equ: main_solv}, then any other solution to the Poisson equation must then be of the form $u+\zeta$ for some $\zeta\in \mathcal K_A$.
    
 \end{remark}
 
 \begin{remark}
 		In the above proposition, it is crucial for us that $\delta_0$ is as in \eqref{delta_0}, that $\{s_k\}_{k=1}^N \subset (0,\frac{1}{2}]$, that $A$ satisfies \eqref{matrix_iden} and that $\{p_k\}_{k=1}^N \subset C^{\infty}_0(\R^n)$, so that the $p_k$'s have compact support. The assumption on $\delta_0$ provides suitable mapping properties for the local divergence part in the equation while the other assumptions simplify the treatment of the nonlocal perturbation and let us avoid discussing elliptic regularity properties in fractional weighted Sobolev spaces. We believe that some sets of assumptions need to be imposed here to have a well-posedness theory.

\end{remark}    

	\begin{proof}[Proof of Proposition \ref{prop_solvability}]
        Applying \cite[Theorem 0]{McOwen1979TheBO} we note that the mapping
        \begin{equation}
        	\label{Delta_weight_map}
        	\Delta: W^{2,2}_{\delta_0}(\R^n)\to L^{2}_{\delta_0+2}(\R^n)
        \end{equation}
        is an isomorphism. Using this result together with the fact that $A$ satisfies \eqref{matrix_iden}, we can apply \cite[Proposition 1.15]{Bartnik1986TheMO} to deduce that the operator 
        $$ \mathcal L_0: W^{2,2}_{\delta_0}(\R^n) \to L^2_{\delta_0+2}(\R^n),$$
        defined via 
        $$\mathcal L_0(u) := -\nabla \cdot\left( A(x)\nabla u\right),$$
        is also an isomorphism. We caution the reader that our notations for weighted Sobolev spaces follow the standard convention and are a bit different from that of Bartnik (up to a shift and sign change for the weight $\delta$). Note also that our choice of $\delta_0$ is non-exceptional in his framework which is crucial. We write $\mathcal L_0^{-1}:L^2_{\delta_0+2}(\R^n)\to W^{2,2}_{\delta_0}(\R^n)$ for its inverse. Next, by noting that $\{p_k\}_{k=1}^N\subset C^{\infty}_0(\R^n)$ and using the mapping property given in Lemma~\ref{Lem: mapping prop of frac Lap} together with the fact that $\{s_k\}_{k=1}^N\subset (0,\frac{1}{2}]$ it is straightforward to see that the mapping
        $$ p_k\,(-\Delta)^s p_k\,\mathcal L_0^{-1}: L^{2}_{\delta_0+2}(\R^n) \to H^{1}_0(U),$$
        is continuous for every $s\in (0,\frac{1}{2}]$ where $U$ is a nonempty bounded open set that contains the support of all the $p_k$'s. Together with the fact that $H^{1}_0(U)$ is compactly embedded in $L^2(U)$ for every nonempty bounded open set $U\subset \R^n$ and that $p_k$'s are compactly supported there, we deduce that the mapping
        $$ p_k(-\Delta)^{s_k} p_k\,\mathcal L_0^{-1}: L^{2}_{\delta_0+2}(\R^n) \to L^2_{\delta_0+2}(\R^n),$$
        is compact for every $k=1,\ldots,N$. This proves that the operator $L_A$ is indeed Fredholm with index zero. That the Fredholm alternative holds in the way that it is written is due to the regularity properties that we describe next.
        
		We only provide a sketch of the proof here as the details are classical techniques used in elliptic regularity. We begin with the claim in \eqref{K_A_reg} that $\mathcal K_A \subset W^{k,2}_{\delta_0}(\R^n)$ for any $k\in \N$. The claim is trivial if $m=0$. For $m\geq 1$, this follows from bootstrapping the boundedness of the maps (see Lemma~\ref{Lem: mapping prop of frac Lap}) 
        $$ (-\Delta)^{s_k} : H^{s}(\R^n) \to H^{s-2s_k}(\R^n)\subset H^{s-1}(\R^n) \quad k=1,\ldots,N,$$
        with the elliptic regularity property given by Lemma~\ref{lem_elliptic_regularity_weighted} (cf. \cite[Proposition 1.6]{Bartnik1986TheMO}), noting crucially that $\{s_k\}_{k=1}^N\subset (0,\frac{1}{2}]$ and that $p_k$'s are compactly supported. (In particular, since $\{p_k\}_{k=1}^N\subset C^{\infty}_0(\R^n)$, the operator of multiplication by $p_k$ is continuous from $H^l(\R^n)$ to  $W^{l,2}_{\delta'}(\R^n)$ 
        for $k=1,\ldots,N$, any $\delta'\in \R$ and $l\in \N \cup \{0\}.$)
        
        Let us now consider the second claim in \eqref{K_A_reg}. Assume again that $m\geq 1$ and consider any  nonzero element $\zeta \in \mathcal K_A$. There holds:
        $$ \mathcal L_0 \zeta = -P_0 \zeta \in L^{2}_{\delta'}(\R^n),$$
        for any $\delta'\in \R$, where we used the fact that $\{p_k\}_{k=1}^N\subset C^{\infty}_0(\R^n)$. Recalling that $\zeta \in W^{2,2}_{\delta_0}(\R^n)$ together with the fact that \eqref{delta_0_property} is satisfied with $\delta=\delta_0$, we may now apply \cite[Proposition 1.14]{Bartnik1986TheMO} to obtain the additional regularity property that $\zeta \in W^{2,2}_{-\delta_0+2}(\R^n)$. Analogously as in the previous paragraph, we can bootstrap this observation via Lemma~\ref{lem_elliptic_regularity_weighted} to show the claim. Finally, the claim regarding the extra regularity of solutions to \eqref{equ: main_solv} when $F\in C^{\infty}_0(\R^n)$ follows analogously.
	\end{proof}

    We will also need the following two lemmas for future reference in Section~\ref{sec: IP}. 
    
    \begin{lemma}
    	\label{lem_ortho}
    	Let $n\geq 3$. Assume that $\dim(\mathcal{K}_{A}) = m$, for some $m\in \N$, where $\mathcal{K}_A$ is given by \eqref{K_A_def}. Let $\zeta_1, \dots, \zeta_m \in \mathcal{K}_{A}$ be linearly independent functions.  Then, for any $c = (c_1, \dots, c_m) \in \mathbb{C}^m$, there exists $g \in C^\infty_0(\Omega_e)$ such that $\LC g, \overline{\zeta_l}\RC _{L^2(\R^n)} = c_l$ for $l = 1, \dots, m$. 
    	\end{lemma}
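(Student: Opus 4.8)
The plan is to argue by contradiction using a standard duality/separation argument, combined with the unique continuation property for the fractional Laplacian. Suppose the conclusion fails for some target vector $c\in\mathbb C^m$. Consider the linear map $T: C^\infty_0(\Omega_e)\to \mathbb C^m$ defined by $T(g)=\big((g,\overline{\zeta_1})_{L^2(\R^n)},\ldots,(g,\overline{\zeta_m})_{L^2(\R^n)}\big)$. The integrals make sense since each $\zeta_l\in\mathcal K_A$ is smooth and lies in $W^{k,2}_{\delta_0}(\R^n)$ for all $k$ by \eqref{K_A_reg}, so $\zeta_l$ restricted to $\Omega_e$ pairs against compactly supported test functions there. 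The range of $T$ is a linear subspace of $\mathbb C^m$; if it were all of $\mathbb C^m$ we would be done. So assume it is a proper subspace. Then there is a nonzero covector, i.e.\ constants $a=(a_1,\ldots,a_m)\in\mathbb C^m\setminus\{0\}$, annihilating the range: $\sum_{l=1}^m a_l\, (g,\overline{\zeta_l})_{L^2(\R^n)}=0$ for every $g\in C^\infty_0(\Omega_e)$.

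The key step is then to interpret this orthogonality condition. Setting $\zeta:=\sum_{l=1}^m \overline{a_l}\,\zeta_l$, which is a nonzero element of $\mathcal K_A$ by linear independence of the $\zeta_l$, the annihilation condition reads $(g,\overline{\zeta})_{L^2(\R^n)}=\int_{\Omega_e} g\,\overline{\overline\zeta}\,dx = 0$ for all $g\in C^\infty_0(\Omega_e)$, which forces $\zeta\equiv 0$ on $\Omega_e$. (One should be slightly careful with the placement of complex conjugates given that we use the Hermitian $L^2$ pairing; this is bookkeeping, and the upshot is that a suitable complex-linear combination of the $\zeta_l$ vanishes identically on the open set $\Omega_e$.) But $\zeta\in\mathcal K_A$ means $-\nabla\cdot(A\nabla\zeta)+P_0\zeta=0$ in $\R^n$; in particular, since $A\equiv\mathrm{Id}$ on $\Omega_e$ by \eqref{matrix_iden} and the cutoffs $p_k$ are supported away from $\Omega_e$ (or at least $P_0\zeta$ is well-controlled there), $\zeta$ satisfies a clean equation on $\Omega_e$. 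Here I would invoke the reduction: $\zeta$ and the combination of its fractional powers appearing through $P_0$ both vanish on the open set $\Omega_e$, so either the classical (UCP) for the fractional Laplacian (when $N=1$), or — in the general polyharmonic case — the entanglement principle of Theorem~\ref{thm: ent} together with the fact that elements of $\mathcal K_A$ have super-exponential (indeed Schwartz-type) decay at infinity by the weighted regularity \eqref{K_A_reg}, yields $\zeta\equiv 0$ on all of $\R^n$. This contradicts $\zeta\neq 0$.

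The main obstacle I anticipate is the careful bookkeeping at two points: first, verifying that the decay of elements of $\mathcal K_A$ afforded by \eqref{K_A_reg} (membership in $W^{k,2}_{\delta_0}(\R^n)$ for all $k$, with $\delta_0>-n/2$ — which gives polynomial, not super-exponential, decay) is actually enough to feed into whichever unique-continuation statement is being used, or whether one must instead argue directly that $\zeta$, being a solution of an elliptic equation that is harmonic-like at infinity, decays fast enough; and second, correctly tracking the conjugations in the Hermitian pairing so that ``$\zeta$ vanishes on $\Omega_e$'' genuinely follows and so that $\zeta$ is an honest element of $\mathcal K_A$ (a complex-linear, not conjugate-linear, combination of the $\zeta_l$). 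Once those are handled, the contradiction is immediate and the lemma follows. As an alternative to the contradiction argument, one could equivalently observe that $T$ is surjective iff the $\zeta_l$ remain linearly independent after restriction to $\Omega_e$, and the latter is exactly what (UCP)/the entanglement principle guarantees.
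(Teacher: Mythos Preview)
Your approach is exactly the paper's: define the linear map $T$, assume its range is a proper subspace, find a nonzero annihilating covector $a$, form $\zeta=\sum_l a_l\zeta_l\in\mathcal K_A$, deduce $\zeta|_{\Omega_e}=0$, and then appeal to the entanglement principle (Theorem~\ref{thm: ent}) to force $\zeta\equiv 0$, contradicting linear independence.

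Your only stated obstacle dissolves immediately: once you have $\zeta|_{\Omega_e}=0$, the function $\zeta$ is supported in $\overline{\Omega}$, which is \emph{compact}, so $\zeta$ trivially has super-exponential decay and the hypotheses of Theorem~\ref{thm: ent} are met (you do not need the weighted-Sobolev decay from \eqref{K_A_reg} at all for this step). The paper makes exactly this observation. Your conjugation worry is genuine bookkeeping but inessential: whether one takes $\zeta=\sum a_l\zeta_l$ or $\zeta=\sum\overline{a_l}\zeta_l$, the conclusion $\zeta|_{\Omega_e}=0$ follows and $\zeta$ is a nontrivial element of the complex-linear span $\mathcal K_A$.
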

    
    	\begin{proof}
   		The proof is this lemma is similar to the proof of \cite[Lemma 3.9]{FKU24} with minor modifications. We include it for the sake of convenience for the reader. We shall show that the following linear map
    	\[
    	T: C_0^\infty(\Omega_e) \ni g \mapsto \big( \LC g, \overline{\zeta_1}\RC_{L^2(\R^n)}, \dots, \LC g, \overline{\zeta_m}\RC_{L^2(\R^n)}\big) \in \mathbb{C}^m
    	\]
    	is surjective. Suppose, for the sake of contradiction, that $T$ is not surjective. Then there exists a nonzero vector $a = (a_1, \dots, a_m) \in \mathbb{C}^m$ such that 
    	\begin{equation}
    	\label{eq_lemma_anal_1}
    	0 = T(g) \cdot a = \LC g, \overline{\zeta}\RC_{L^2(\R^n)},
    	\end{equation}
    	for all $g \in C^\infty_0(\Omega_e)$. Here, $\zeta := \sum_{l=1}^m a_l \zeta_l \in W^{2,2}_{\delta_0}(\R^n)$, and $\xi \cdot \eta = \sum_{l=1}^N \xi_l \overline{\eta_l}$ denotes the inner product of vectors $\xi, \eta \in \mathbb{C}^m$. It follows from \eqref{eq_lemma_anal_1} that $\zeta|_{\Omega_e} = 0$. Together with the fact that $\zeta \in \mathcal{K}_{A}$, we obtain from the expression for $L_A$ in \eqref{equ: main} that 
    	$$\zeta|_{\Omega_e}=0 \quad \text{and}\quad \sum_{k=1}^m p_k (-\Delta)^{s_k} (p_k \zeta) =0 \quad \text{in $\Omega_e$}$$
    	In particular, note that the first identity above also implies that the function $\zeta\in W^{2,2}_{\delta_0}(\R^n)$ satisfies super-exponential decay. Our entanglement principle Theorem~\ref{thm: ent} (applied with the choices $N=m$, $u_k=a_k\zeta$, $b_k=1$ for $k=1,\ldots,m$ and any nonempty bounded open set $\mathcal O\subset \Omega_e$) implies that $a_k\zeta = 0$ on $\R^n$ for all $k=1,\ldots,m$, and thus $a = 0$, which is a contradiction.
    	\end{proof}

 \begin{lemma}
            \label{lem_ortho_cond}
            Let $n\geq 3$. We adopt the same notations as in Proposition~\ref{prop_solvability}.  Suppose that $v\in L^{2}_{\delta_0}(\R^n)$ is a function such that the following conditional statement holds: $$\text{if}\quad \left(\textrm{$F\in C^{\infty}_0(\Omega_e)$ with $(F,\overline{\zeta})_{L^2(\Omega_e)}=0$ for all $\zeta\in \mathcal K_A$}\right) \quad \text{then}\quad (F,\overline{v})_{L^2(\Omega_e)}=0.$$ 
            Then, $v|_{\Omega_e}=\zeta|_{\Omega_e}$ for some  $\zeta\in \mathcal K_A$.
            \end{lemma}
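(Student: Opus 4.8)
The plan is to run a soft duality argument that exploits only one hard input, namely that $\mathcal K_A$ is finite dimensional (which is the Fredholm statement of Proposition~\ref{prop_solvability}). Write $m=\dim(\mathcal K_A)$, allowing $m=0$, and fix a basis $\zeta_1,\ldots,\zeta_m$ of $\mathcal K_A$ (the empty basis if $m=0$). On the vector space $V:=C^\infty_0(\Omega_e)$ introduce the linear functionals $\ell_l(F):=(F,\overline{\zeta_l})_{L^2(\Omega_e)}=\int_{\Omega_e}F\,\zeta_l\,dx$ for $l=1,\ldots,m$, and $\ell(F):=(F,\overline{v})_{L^2(\Omega_e)}=\int_{\Omega_e}F\,v\,dx$. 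These are well defined and linear in $F$, since $\zeta_1,\ldots,\zeta_m$ are smooth and $v\in L^2_{\delta_0}(\R^n)\subset L^2_{\mathrm{loc}}(\Omega_e)$, so each integrand lies in $L^1_{\mathrm{loc}}(\Omega_e)$. Because $\{\zeta_l\}_{l=1}^m$ spans $\mathcal K_A$, the hypothesis of the lemma says precisely that $\bigcap_{l=1}^m \ker\ell_l\subseteq\ker\ell$.

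Next I would invoke the elementary linear‑algebra fact: a linear functional on a vector space that vanishes on the common kernel of finitely many linear functionals $\ell_1,\ldots,\ell_m$ is a linear combination of them. (Sketch: the map $\Phi=(\ell_1,\ldots,\ell_m)\colon V\to\C^m$ has kernel $\bigcap_l\ker\ell_l\subseteq\ker\ell$, so $\ell$ descends to a well‑defined linear functional on $\Phi(V)\subseteq\C^m$; extending it to a functional $x\mapsto\sum_l c_l x_l$ on all of $\C^m$ gives $\ell=\sum_l c_l\,\ell_l$.) No topology or closedness is used here, only the finite dimensionality furnished by Proposition~\ref{prop_solvability}; this is exactly why the conclusion holds at the level of $C^\infty_0(\Omega_e)$ with no decay assumption on $v$. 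Applying this, we obtain constants $c_1,\ldots,c_m\in\C$ with
\[
\int_{\Omega_e}F\,v\,dx=\sum_{l=1}^m c_l\int_{\Omega_e}F\,\zeta_l\,dx=\int_{\Omega_e}F\,\Big(\textstyle\sum_{l=1}^m c_l\zeta_l\Big)\,dx \qquad\text{for all }F\in C^\infty_0(\Omega_e).
\]

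Finally, set $\zeta:=\sum_{l=1}^m c_l\zeta_l\in\mathcal K_A$ and $w:=v-\zeta\in L^2_{\mathrm{loc}}(\Omega_e)$. The displayed identity reads $\int_{\Omega_e}F\,w\,dx=0$ for every $F\in C^\infty_0(\Omega_e)$, so the fundamental lemma of the calculus of variations forces $w=0$ a.e.\ on $\Omega_e$, i.e.\ $v|_{\Omega_e}=\zeta|_{\Omega_e}$ with $\zeta\in\mathcal K_A$. When $m=0$, i.e.\ $\mathcal K_A=\{0\}$, the hypothesis is vacuous (any $F\in C^\infty_0(\Omega_e)$ trivially satisfies the orthogonality condition), $\ell$ is the zero functional, and the same last step gives $v|_{\Omega_e}=0$. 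I do not expect a genuine analytic obstacle here: the only points needing care are that the pairings be read in the $L^2(\Omega_e)$ sense (rather than the $\wt H^s$ duality), so that they make sense tested against merely locally integrable functions, and that $\mathcal K_A$ be finite dimensional, which is precisely the Fredholmness in Proposition~\ref{prop_solvability} and is where the entanglement principle ultimately enters (through Lemma~\ref{lem_ortho}, which one could alternatively use, via surjectivity of the map $T$ there, to obtain the same factorization of $\ell$; it also yields uniqueness of $\zeta\in\mathcal K_A$, though uniqueness is not needed for the statement).
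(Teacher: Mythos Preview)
Your proof is correct and takes a genuinely more elementary route than the paper's. The paper first multiplies by the weight $\sigma^{\delta_0}$ to land in the honest Hilbert space $L^2(\Omega_e)$, performs the orthogonal decomposition $w=(\sigma^{\delta_0}\zeta)|_{\Omega_e}+w_0$ with $w_0\perp(\sigma^{\delta_0}\mathcal K_A)|_{\Omega_e}$, and then shows $w_0=0$ by an approximation argument that invokes Lemma~\ref{lem_ortho} to manufacture a dual basis $\{\theta_k\}\subset C^\infty_0(\Omega_e)$. Your argument bypasses all of this: you work purely on the vector space $C^\infty_0(\Omega_e)$ and use the algebraic fact that a functional vanishing on $\bigcap_l\ker\ell_l$ lies in $\mathrm{span}\{\ell_l\}$, needing only that $m<\infty$. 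One consequence worth noting is that your proof does \emph{not} use Lemma~\ref{lem_ortho} at all, and hence does not rely (even indirectly) on the entanglement principle; the only structural input is the Fredholmness of $L_A$ from Proposition~\ref{prop_solvability}. Your closing parenthetical slightly obscures this point: the entanglement principle enters the paper's proof of this lemma via Lemma~\ref{lem_ortho}, but it does not enter yours. The trade-off is that the paper's approach lives inside a Hilbert-space framework that generalizes readily to settings where one cares about the topology (e.g.\ if one wanted a quantitative version), whereas your argument is cleaner and sharper for the bare qualitative statement needed here.
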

        
            \begin{proof}
The argument that we present closely follows a part of the proof of \cite[Lemma 3.10]{FKU24} with some adjustments as we are using weighted Sobolev spaces on $\R^n$ and some care is needed. Let $\textrm{dim}\,\mathcal{K}_A=m\geq 1$ as the claim is trivial in the case $m=0$. Let $\mathcal  W:=(\sigma^{\delta_0\,}\mathcal K_A)|_{\Omega_e}$ where $\sigma=(1+|x|^2)^{\frac{1}{2}}$. Based on the proof of the previous lemma, we know that $\textrm{dim}\, \mathcal W=m$ and that if $\{\zeta_1,\ldots,\zeta_m\}$ is a basis for $\mathcal K_A$, then, 
\[
\mathcal W = \mathrm{span}\left\{(\sigma^{\delta_0}\zeta_1)|_{\Omega_e}, \dots, (\sigma^{\delta_0}\zeta_{m})|_{\Omega_e}\right\} \subset L^2(\Omega_e).
\]
Define $w:=(\sigma^{\delta_0}v)|_{\Omega_e} \in L^2(\Omega_e)$. Writing the orthogonal decomposition $L^2(\Omega_e) = \mathcal W \oplus \mathcal W^\perp$, we deduce that
\begin{equation}
\label{lem_density_new_1_2}
w = (\sigma^{\delta_0}\zeta)|_{\Omega_e} + w_0,
\end{equation}
where $\zeta \in \mathcal K_A$ and $w_0 \in \mathcal W^\perp$, i.e.,
\begin{equation}
\label{lem_density_new_2}
\big(w_0, \sigma^{\delta_0}\zeta_k\big)_{L^2(\Omega_e)} = 0 \quad \text{for all} \quad k = 1, \dots, m.
\end{equation}
We shall next show that the conditional statement in the lemma implies that $w_0 = 0$. To see this, let $\{h_\ell\}_{\ell=1}^\infty \subset C_0^\infty(\Omega_e)$ be such that
\begin{equation}
\label{lem_density_new_3}
\left\|h_\ell - \overline{w_0}\right\|_{L^2(\Omega_e)} \to 0 \quad \text{as} \quad \ell \to \infty.
\end{equation}
It follows from \eqref{lem_density_new_2} and \eqref{lem_density_new_3} that
\begin{equation}
\label{lem_density_new_4}
\lim_{\ell \to \infty} (h_\ell, \sigma^{\delta_0}\overline{\zeta_k})_{L^2(\Omega_e)} = 0 \quad \text{for all} \quad k = 1, \dots, m.
\end{equation}
By Lemma~\ref{lem_ortho}, there exist functions $\{\theta_k\}_{k=1}^N \subset C_0^\infty(\Omega_e)$ such that
\begin{equation}
\label{lem_density_new_5}
\left(\theta_k, \sigma^{\delta_0}\overline{\zeta_j}\right)_{L^2(\Omega_e)} = \delta_{kj} \quad \text{for all} \quad k, j = 1, \dots, m.
\end{equation}
Consider the sequence of functions $F_\ell \in C_0^\infty(\Omega_e)$ defined by
\[
F_\ell = h_\ell - \sum_{j=1}^{m} \left(h_\ell, \sigma^{\delta_0}\overline{\zeta_j}\right)_{L^2(\Omega_e)} \theta_j, \quad \ell = 1,2, \dots.
\]
It follows from \eqref{lem_density_new_5} that 
$$\left(\sigma^{\delta_0}F_\ell, \overline{\zeta_k}\right)_{L^2(\Omega_e)} =\left(F_\ell, \sigma^{\delta_0}\overline{\zeta_k}\right)_{L^2(\Omega_e)} = 0 \quad \text{for all $k = 1, \dots, m$, and $\ell=1,2,\dots$},$$ 
and therefore, by the hypothesis of the lemma, and the definition of $w$,
\begin{equation}
\label{lem_density_new_6}
\left(F_\ell, \overline{w}\right)_{L^2(\Omega_e)} = 0 \quad \text{for all} \quad \ell = 1, 2, \dots.
\end{equation}
We observe from \eqref{lem_density_new_3} and \eqref{lem_density_new_4} that
\begin{equation}
\label{lem_density_new_7}
\|F_\ell - \overline{w_0}\|_{L^2(\Omega_e)} \to 0 \quad \text{as} \quad \ell \to \infty.
\end{equation}
It follows from \eqref{lem_density_new_6} and \eqref{lem_density_new_7} that $\left(\overline{w_0}, \overline{w_0}\right)_{L^2(\Omega_e)} = 0$ and therefore, $w_0 = 0$, thus completing the proof of the lemma.
 \end{proof}

			\subsection{Well-posedness of the DN map with partial data}
			\label{sec: exterior problem}
			We will now discuss the well-posedness of the exterior-value problem \eqref{equ: main 2} under an additional constraint on $\{b_k\}_{k=1}^N$, namely that they are all positive real numbers.
			
			\begin{lemma}[Well-posedness]\label{Lem: well-posedness}
				Let $\Omega \subset \R^n$, $n\geq 2$, be a bounded Lipschitz domain, and let $W\Subset \Omega_e$ be a nonempty bounded open set. Let $N \in \N$, let $\{b_k\}_{k=1}^N \subset (0,\infty)$ and let $\{ s_k\}_{k=1}^N$ satisfy \ref{exponent condition}. Let $q\in L^\infty(\Omega)$ satisfy \eqref{eigenvalue condition}.  Then, given any $f\in C^\infty_0(W)$, there exists a unique solution $u\in H^{s_N}(\R^n)$ to \eqref{equ: main 2} subject to the exterior Dirichlet data $f$.
			\end{lemma}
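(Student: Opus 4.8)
The plan is to solve the exterior Dirichlet problem \eqref{equ: main 2} by the standard variational argument combined with the Fredholm alternative, in the spirit of the well-posedness theory in \cite{GSU20}. First I would reduce to a homogeneous exterior condition: since $f\in C^\infty_0(W)$ with $W\Subset \Omega_e$, any candidate solution $u\in H^{s_N}(\R^n)$ with $u|_{\Omega_e}=f$ can be written $u=f+v$ with $v\in\wt H^{s_N}(\Omega)$. Using the Fourier definition of $(-\Delta)^{s_k}$ together with Parseval's identity and the self-adjointness of $(-\Delta)^{s_k/2}$ (and noting $s_k\le s_N$, so $v\in H^{s_k}(\R^n)$ for all $k$), the equation $P_qu=0$ in $\Omega$, read in the distributional sense and tested against $w\in\wt H^{s_N}(\Omega)$, becomes the variational identity $B_q(v,w)=\ell(w)$ for all $w\in\wt H^{s_N}(\Omega)$, where $B_q(v,w):=\sum_{k=1}^N b_k\langle (-\Delta)^{s_k/2}v,(-\Delta)^{s_k/2}w\rangle_{L^2(\R^n)}+\langle qv,w\rangle_{L^2(\Omega)}$ and $\ell(w):=-\sum_{k=1}^N b_k\langle (-\Delta)^{s_k/2}f,(-\Delta)^{s_k/2}w\rangle_{L^2(\R^n)}$; the term $\langle qf,w\rangle_{L^2(\Omega)}$ drops out since $\supp f\cap\Omega=\emptyset$. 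One then checks that $B_q$ is a bounded sesquilinear form on $\wt H^{s_N}(\Omega)\times\wt H^{s_N}(\Omega)$ and that $\ell$ is a bounded antilinear functional there, the latter because $(-\Delta)^{s_k/2}f\in L^2(\R^n)$ for each $k$ (as $f\in C^\infty_0(\R^n)$).

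Next I would establish a Gårding inequality for $B_q$. This is precisely where the hypothesis $b_k>0$ for all $k$ is used: the intermediate terms $b_k\|(-\Delta)^{s_k/2}v\|_{L^2(\R^n)}^2$ are then nonnegative and may simply be discarded from below, while the leading term is controlled via the fractional Poincaré inequality on the bounded set $\Omega$ (valid for any $s>0$; see e.g. \cite{GSU20}), namely $b_N\|(-\Delta)^{s_N/2}v\|^2_{L^2(\R^n)}\ge c_0\|v\|^2_{H^{s_N}(\R^n)}$ for some $c_0>0$ and all $v\in\wt H^{s_N}(\Omega)$. Combining this with $|\langle qv,v\rangle_{L^2(\Omega)}|\le\|q\|_{L^\infty(\Omega)}\|v\|^2_{L^2(\Omega)}$ yields $\mathrm{Re}\,B_q(v,v)\ge c_0\|v\|_{H^{s_N}(\R^n)}^2-\|q\|_{L^\infty(\Omega)}\|v\|_{L^2(\Omega)}^2$ for all $v\in\wt H^{s_N}(\Omega)$. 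Since $\Omega$ is bounded, the embedding $\wt H^{s_N}(\Omega)\hookrightarrow L^2(\Omega)$ is compact, so $B_q$ differs from a coercive form by the form associated with a compact operator; hence the operator $\wt H^{s_N}(\Omega)\to (\wt H^{s_N}(\Omega))^\ast$ induced by $B_q$ is Fredholm of index zero, and the Fredholm alternative reduces solvability (existence and uniqueness) of $B_q(v,\cdot)=\ell$ to the triviality of the homogeneous problem.

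Finally I would invoke the eigenvalue condition \eqref{eigenvalue condition} to close the argument: if $v\in\wt H^{s_N}(\Omega)$ satisfies $B_q(v,w)=0$ for all $w\in\wt H^{s_N}(\Omega)$, then $v=0$ in $\Omega_e$ and $P_qv=0$ in $\Omega$ in the distributional sense, so $v\equiv 0$ in $\R^n$; hence the Fredholm operator is an isomorphism, producing a unique $v\in\wt H^{s_N}(\Omega)$, and $u:=f+v\in H^{s_N}(\R^n)$ is the desired solution. Uniqueness of $u$ follows by applying \eqref{eigenvalue condition} to the difference of two solutions, which lies in $\wt H^{s_N}(\Omega)$. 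I do not expect a genuine obstacle here; the only points requiring a little care are the bookkeeping ones: the fractional Poincaré inequality and the compact Rellich embedding in the $\wt H^{s_N}$ scale for bounded $\Omega$ (both standard, reducible to the case $s\in(0,1)$ by Hölder interpolation in frequency if a citation covering large $s$ is not at hand), the boundedness of $\ell$ (using $f\in C^\infty_0$ so that $(-\Delta)^{s_k/2}f\in L^2(\R^n)$), and verifying that the weak formulation above is genuinely equivalent to \eqref{equ: main 2} interpreted distributionally.
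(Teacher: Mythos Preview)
Your proposal is correct and follows essentially the same route as the paper: reduce to $v\in\wt H^{s_N}(\Omega)$ via $u=f+v$, set up the sesquilinear form $B_q$, use positivity of the $b_k$ to get coercivity of the leading part (the paper stops at $B_0(v,v)\ge b_N\|(-\Delta)^{s_N/2}v\|_{L^2}^2$ and defers to \cite{GSU20,GLX} for the rest), and invoke the eigenvalue condition \eqref{eigenvalue condition} for uniqueness. Your explicit mention of the G{\aa}rding inequality, compact embedding, and Fredholm alternative simply spells out what the paper's sketch leaves implicit in its citation of the standard Lax--Milgram/Fredholm machinery from \cite{GSU20}.
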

			
			\begin{proof}
			We give a brief sketch of this standard lemma. By considering $u=v+f$ in $\R^n$, we can study the well-posedness of an alternative equation 
				\begin{equation}
					\begin{cases}
						P_q v = \varphi  &\text{ in }\Omega, \\
						v=0 &\text{ in }\Omega_e,
					\end{cases}
				\end{equation}
				where $\varphi = \left. - (P_q f) \right|_{\Omega}$. Recalling that $\{b_k\}_{k=1}^\infty\subset (0,\infty)$, consider the sesquilinear form
				\begin{equation}
					B_0 (v,w):=\sum_{k=1}^N b_k \big(  (-\Delta)^{s_k/2} v , (-\Delta)^{s_k/2} \big)_{L^2(\R^n)}.
				\end{equation}
				for any $v,w\in \wt H^{s_N}(\Omega)$.
				It is straightforward to see that $B_0(\cdot, \cdot)$ satisfies both boundedness and coercive. In other words, we have 
				\begin{equation}
					\left| B_0(v,w)\right| \leq \sum_{k=1}^N \big\| (-\Delta)^{s_k/2} v \big\|_{L^2(\R^n)}\big\| (-\Delta)^{s_k/2} w \big\|_{L^2(\R^n)},
				\end{equation}
				and 
				\begin{equation}
					B_0(v,v)\geq \sum_{k=1}^N b_k \big\| (-\Delta)^{s_k/2} v \big\|_{L^2(\R^n)}^2 \geq b_N \big\| (-\Delta)^{s_N/2} v \big\|_{L^2(\R^n)}^2 ,
				\end{equation}
				for any $v,w\in \wt H^{s_N}(\Omega)$, where we use $\{ b_k\}_{k=1}^N\subset (0,\infty)$ in the last inequality. Hence, the rest of the proof follows the standard method of the proof of the Lax-Milgram theorem (for example, see \cite{GSU20,GLX}), and is therefore omitted.  In short, for the sesquilinear form 
				\begin{equation}\label{B_q bilinear}
					B_q(v,w):=B_0(v,w)+\LC qv , w \RC_{L^2(\Omega)},
				\end{equation}
				one can find a unique $v\in \wt H^s(\Omega)$ satisfying 
				$$
				B_q(v,w)=\langle \varphi , w \rangle_{H^{-s_N}(\Omega),\wt H^{s_N}(\Omega)},
				$$ 
				for any $w\in \wt H^{s_N}(\Omega)$, provided the condition \eqref{eigenvalue condition} holds. This concludes the proof.
			\end{proof}

			With the above-mentioned well-posedness result, the DN map \eqref{DN map} is well-defined. Specifically, there is the relation
			\begin{equation}\label{DN and bilinear}
				\left\langle \Lambda_q f ,g \right\rangle :=\left\langle \Lambda_q f ,g \right\rangle_{H^{-s_N}(\Omega_e),\widetilde{H}^{s_N}(\Omega_e)} = B_q (u_f,w_g),
			\end{equation}
			where $u_f\in H^{s_N}(\R^n)$ is the solution to \eqref{equ: main 2}, $w_g\in H^{s_N}(\R^n)$ is any function with $\left. w_g \right|_{\Omega_e}=g$ and $B_q(\cdot, \cdot)$ is given by \eqref{B_q bilinear}.
			Furthermore, one can derive the following integral identity.
			
			\begin{lemma}[Integral identity]
				Let $\Omega\subset \R^n$ be a bounded domain with Lipschitz boundary for $n\geq 2$. Let $\{b_k\}_{k=1}^N\subset (0,\infty)$ and $\{s_k\}_{k=1}^N\subset (0,\infty)$ with $0<s_1<s_2<\ldots< s_N$ satisfy \ref{exponent condition}. Let $q, q_{1},q_{2}\in L^{\infty}(\Omega)$
				satisfy \eqref{eigenvalue condition}. For any $f_1,f_2\in C^\infty_0(\Omega_{e})$, we have the symmetry property
				\begin{equation}\label{eq:adjoint operator}
					\left\langle \Lambda_q f_1 , \overline{f_2}\right\rangle = 	\left\langle f_1, \Lambda_{\overline q} f_2\right\rangle ,
				\end{equation}
				and the integral identity
				\begin{equation}\label{eq:integral identity}
					\left\langle (\Lambda_{q_{1}}-\Lambda_{q_{2}})f_{1},f_{2}\right\rangle=\LC \LC q_{1}-q_{2}\RC u_{f_1},u_{f_2}\RC_{L^2(\Omega)}
				\end{equation}
				where for $j=1,2,$ $u_{f_j}\in H^{s}(\mathbb{R}^{n})$ is the unique solution to \eqref{equ: main 2} with $q=q_j$ and $f=f_j$.
			\end{lemma}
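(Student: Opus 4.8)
The plan is to prove the symmetry property \eqref{eq:adjoint operator} first, and then derive the integral identity \eqref{eq:integral identity} as a short consequence of it together with the characterization \eqref{DN and bilinear} of the DN map via the sesquilinear form $B_q$. The starting point is the relation
\begin{equation*}
	\left\langle \Lambda_q f_1, g\right\rangle = B_q(u_{f_1}, w_g),
\end{equation*}
valid for any $g\in C^{\infty}_0(\Omega_e)$ and any $w_g\in H^{s_N}(\R^n)$ with $w_g|_{\Omega_e}=g$. First I would take $g=\overline{f_2}$ and, crucially, choose the extension $w_g=u_{\overline{f_2}}$, i.e. the solution to \eqref{equ: main 2} with potential $\overline q$ and exterior data $\overline{f_2}$; this is legitimate since $u_{\overline{f_2}}|_{\Omega_e}=\overline{f_2}$. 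Then $\left\langle \Lambda_q f_1,\overline{f_2}\right\rangle = B_q(u_{f_1}, u_{\overline{f_2}})$. Similarly, $\left\langle \Lambda_{\overline q} f_2, \overline{f_1}\right\rangle = B_{\overline q}(u_{\overline{f_2}}, u_{f_1})$, where now I use the solution $u_{f_1}$ itself as the extension of $f_1$. The claim then reduces to comparing $B_q(u_{f_1},u_{\overline{f_2}})$ with $\overline{B_{\overline q}(u_{\overline{f_2}},u_{f_1})}$, i.e. to establishing a conjugate-symmetry of the bilinear forms $B_q$ across a change of potential to $\overline q$.

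The key algebraic fact is that the principal part $B_0(v,w)=\sum_k b_k\big((-\Delta)^{s_k/2}v,(-\Delta)^{s_k/2}w\big)_{L^2(\R^n)}$ satisfies $B_0(v,w)=\overline{B_0(w,v)}$, because each $b_k>0$ is real and the $L^2$ inner product is Hermitian; this uses $\{b_k\}\subset(0,\infty)$ in an essential way. For the zeroth-order term, $\left(qv,w\right)_{L^2(\Omega)}=\overline{\left(\overline q\,w, v\right)_{L^2(\Omega)}}$, so that $B_q(v,w)=\overline{B_{\overline q}(w,v)}$ for all $v,w\in \wt H^{s_N}(\Omega)$ — and by density this extends to the relevant solutions, which lie in $H^{s_N}(\R^n)$ but whose relevant pairings only see the $\Omega$-part for the potential term and the global part (which is fine) for the principal part. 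Applying this with $v=u_{f_1}$, $w=u_{\overline{f_2}}$ gives $B_q(u_{f_1},u_{\overline{f_2}})=\overline{B_{\overline q}(u_{\overline{f_2}},u_{f_1})}=\overline{\left\langle \Lambda_{\overline q}f_2,\overline{f_1}\right\rangle}=\left\langle f_1,\Lambda_{\overline q}f_2\right\rangle$, where the last equality is just the definition of the sesquilinear pairing $\langle\cdot,\cdot\rangle$ and conjugation. This proves \eqref{eq:adjoint operator}. One subtlety to be careful about: the pairing $\langle\Lambda_q f_1,g\rangle$ in \eqref{DN and bilinear} is defined for $g$ the restriction of an $H^{s_N}$ function, and $\Lambda_q f_1$ lives in $H^{-s_N}(\Omega_e)$; I would note that since $P_q u_{f_1}=0$ in $\Omega$, the "Neumann data" $(P_q u_{f_1})|_{\Omega_e}$ is well-defined as an element of $H^{-s_N}$ and the form $B_q(u_{f_1},w_g)$ is independent of the choice of extension $w_g$ precisely because of the equation $P_q u_{f_1}=0$ in $\Omega$ — this independence is what makes the choice $w_g=u_{\overline{f_2}}$ (or $w_g=u_{f_1}$) permissible.

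For the integral identity \eqref{eq:integral identity}, I would write, using \eqref{DN and bilinear} with the extensions $w_{f_2}=u_{f_2}^{(q_2)}$ for the $\Lambda_{q_1}$ term and $w_{f_2}=u_{f_2}^{(q_1)}$ for the $\Lambda_{q_2}$ term (or, more symmetrically, use the adjointness just proved): $\left\langle (\Lambda_{q_1}-\Lambda_{q_2})f_1,f_2\right\rangle = B_{q_1}(u_{f_1}^{(q_1)},u_{f_2}^{(q_2)}) - B_{q_2}(u_{f_2}^{(q_2)},u_{f_1}^{(q_1)})$ after moving one DN map to the other side via \eqref{eq:adjoint operator}. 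Since the principal parts $B_0$ agree for both potentials and $B_0$ is conjugate-symmetric, the $B_0$-contributions cancel, leaving only $\left(q_1 u_{f_1}^{(q_1)},u_{f_2}^{(q_2)}\right)_{L^2(\Omega)} - \left(q_2 u_{f_1}^{(q_1)}, u_{f_2}^{(q_2)}\right)_{L^2(\Omega)} = \left((q_1-q_2)u_{f_1},u_{f_2}\right)_{L^2(\Omega)}$, which is the desired identity; here I also use that $u_{f_j}^{(q_j)}$ solves $P_{q_j}u=0$ in $\Omega$, so the term $B_{q_j}(u_{f_j}^{(q_j)},\cdot)$ tested against a function equal to $f_i$ in $\Omega_e$ computes the DN pairing. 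The main obstacle is none of the computations themselves, which are routine, but rather the careful bookkeeping of the sesquilinear (as opposed to bilinear) structure: keeping track of which arguments get conjugated, ensuring the conjugate potential $\overline q$ appears exactly where \eqref{eigenvalue condition} still guarantees solvability (it does, since $\overline q\in L^{\infty}(\Omega)$ and $0$ being a Dirichlet eigenvalue of $P_q$ is equivalent to it being one of $P_{\overline q}$ by conjugating the equation), and verifying that the extension-independence of $B_q(u_f,\cdot)$ holds in the weighted/fractional setting so that the convenient choices of extensions are allowed.
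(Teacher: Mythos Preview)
Your proposal is correct and follows essentially the same route as the paper: both arguments derive the symmetry \eqref{eq:adjoint operator} from the conjugate-symmetry $B_q(v,w)=\overline{B_{\overline q}(w,v)}$ of the sesquilinear form, and then obtain \eqref{eq:integral identity} by expressing each DN pairing via \eqref{DN and bilinear} (choosing the extension to be the solution for the other potential) and subtracting so that the common $B_0$-part cancels. Your write-up is simply more explicit than the paper's terse proof about the extension-independence, the role of $\{b_k\}\subset(0,\infty)$ in the Hermitian symmetry of $B_0$, and the sesquilinear bookkeeping.
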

			
			\begin{proof}
				The symmetry \eqref{eq:adjoint operator} of the DN map comes from the symmetry of the sesquilinear form $B_q(\cdot, \cdot)$ (see e.g. \eqref{DN and bilinear}). On the other hand, by \eqref{eq:adjoint operator}, we have 
				\begin{equation*}
					\begin{split}
						\left\langle (\Lambda_{q_{1}}-\Lambda_{q_{2}})f_{1},\overline{f_{2}}\right\rangle & =\left\langle \Lambda_{q_{1}}f_{1},\overline{f_{2}}\right\rangle-\left\langle f_{1},\Lambda_{\overline{q_{2}}}\overline{f_{2}}\right\rangle\\\
						&=B_{q_{1}}(u_{f_1},u_{f_2})-B_{q_{2}}(u_{f_1},u_{f_2})\\
						& =\LC \LC q_{1}-q_{2}\RC u_{f_1},u_{f_2}\RC_{L^2(\Omega)}.
					\end{split}
				\end{equation*}
				This concludes the proof.
			\end{proof}

			\section{Entanglement principle}\label{sec: entanglement}

			We first show that the proof of Theorem~\ref{thm: ent} follows from an analogous statement for smooth functions whose derivatives of all orders enjoy super-exponential decay at infinity. 
			
			\begin{theorem}\label{thm_ent_smooth}
		      Let $\{\alpha_k\}_{k=1}^N\subset (0,1)$ with $\alpha_1<\ldots<\alpha_N$ satisfy \begin{equation}
					\label{exp_condition_alpha}
					\left(|\alpha_j-\alpha_k|\neq \frac{1}{2} \quad \text{for $j,k=1,\ldots,N$} \right), \quad \text{if the dimension $n$ is odd.}
				\end{equation}
				Let $\mathcal{O}\subset \R^n$, $n\geq 2$, be a nonempty open set and assume that $\{v_k\}_{k=1}^N\subset C^{\infty}(\R^n)$ and that there exists constants $\rho>0$ and $\gamma>1$ such that given any multi-index $\beta=(\beta_1,\ldots,\beta_n) \in \LC \N \cup \{0\} \RC^n$ there holds 
				\begin{equation}\label{exp_decay}
					\left|D^{\beta} v_k(x)\right| \leq C_\beta\, e^{-\rho|x|^\gamma} \quad \forall\, x\in \R^n \qquad k=1,\ldots,N,
				\end{equation}
				for some $C_\beta>0$ where $D^\beta = \frac{\p^{|\beta|}}{\p x_1^{\beta_1}\ldots\, \p x_n^{\beta_n}}.$
				If,
				\begin{align}\label{condition_ent}
					v_1|_{\mathcal O}=\ldots=v_N|_{\mathcal O}=0 \quad \text{and} \quad  \sum_{k=1}^N ((-\Delta)^{\alpha_k}v_k)\big|_{\mathcal O}=0,
				\end{align} 
			then $v_k\equiv 0$ in $\R^n$ for each $k=1,\ldots,N$.
			\end{theorem}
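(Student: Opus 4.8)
The plan is to convert \eqref{condition_ent} into a statement about a single holomorphic function of one complex variable, then into a moment problem, and finally into a support problem for spherical averages, following three steps. Fix a point $x\in\mathcal O$. Since each $v_k$ vanishes identically on a neighbourhood of $x$, the heat flow $t\mapsto(e^{t\Delta}v_k)(x)$ is $O(e^{-c/t})$ as $t\to 0^+$, while \eqref{heat_kernel} and \eqref{exp_decay} give it the polynomial decay rate $t^{-n/2}$ as $t\to\infty$, so the scalar function
\begin{equation}
F(z):=\sum_{k=1}^N\frac{\Gamma(z+1+\alpha_k)}{\Gamma(-\alpha_k)\,\Gamma(1+\alpha_k)}\int_0^\infty (e^{t\Delta}v_k)(x)\,t^{-(z+1+\alpha_k)}\,dt
\end{equation}
is absolutely convergent and holomorphic on $\{\mathrm{Re}(z)\ge 0\}$. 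A Fubini computation on a vertical strip of the form $-\tfrac{n}{2}-\alpha_1<\mathrm{Re}(z)<-\alpha_N$ followed by analytic continuation identifies $F(z)$ with $\sum_k\frac{\sin(\pi\alpha_k)}{\sin(\pi(z+\alpha_k))}\big((-\Delta)^{z+\alpha_k}v_k\big)(x)$, where $\big((-\Delta)^{s}v_k\big)(x)=\int_{\R^n}|\xi|^{2s}\,\widehat{v_k}(\xi)\,e^{ix\cdot\xi}\,d\xi$ for $\mathrm{Re}(s)>-\tfrac{n}{2}$, continued meromorphically otherwise. At $z=0$ this equals $\sum_k\big((-\Delta)^{\alpha_k}v_k\big)(x)=0$ by \eqref{condition_ent}, and at $z=m\in\N$ it equals $(-1)^m(-\Delta)^m\big(\sum_k(-\Delta)^{\alpha_k}v_k\big)(x)$, which vanishes because the bracket is identically zero on the open set $\mathcal O\ni x$ and $(-\Delta)^m$ is a differential operator. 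Combining Stirling's formula with separate bounds for the $t\to 0^+$ and $t\to\infty$ parts of the integral (Lemma~\ref{lem_analytic_bounds}) gives the growth estimates for $F$ on the right half-plane that are needed for a Carlson-type interpolation theorem for holomorphic functions under growth conditions at infinity, in the form due to Pila \cite{pila_05} (Theorem~\ref{thm_Pila}); this forces $F\equiv 0$ on $\{\mathrm{Re}(z)\ge 0\}$ (Proposition~\ref{prop_F_zero}).

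The second step is to meromorphically continue $F$ across the imaginary axis and read off its poles; only Schwartz-type decay of the $v_k$ is used here. Inserting the large-time asymptotic expansion $(e^{t\Delta}v_k)(x)\sim(4\pi t)^{-n/2}\sum_{j\ge 0}\tfrac{(-1)^j}{j!\,(4t)^j}\,\mu_j^k(x)$, with moments $\mu_j^k(x):=\int_{\R^n}v_k(y)\,|x-y|^{2j}\,dy$, into the defining integral and continuing term by term — exactly as in the meromorphic continuation of $\Gamma$ driven by \eqref{recursion of Gamma function} (Lemma~\ref{lem_meromorphic}) — exhibits $F$ as a meromorphic function. Its poles lie at $z=-\tfrac{n}{2}-j-\alpha_k$, whose principal parts carry the moments $\mu_j^k(x)$, and, through the factor $\sin(\pi(z+\alpha_k))^{-1}$, at $z=-m-\alpha_k$ for $m\ge 1$, whose principal parts carry the Riesz potentials $\big((-\Delta)^{-m}v_k\big)(x)$, equal (when $n$ is odd) to a nonzero constant times $\int_{\R^n}v_k(y)\,|x-y|^{2m-n}\,dy$. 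Since $F\equiv 0$ on the right half-plane, $F$ vanishes identically, so every principal part is zero. The role of the hypotheses is disentanglement: distinct $\alpha_j,\alpha_k\in(0,1)$ never differ by an integer, and — in odd dimensions — \eqref{exp_condition_alpha} guarantees they never differ by $\tfrac{1}{2}$; together, these keep the poles of distinct terms, and those produced by the two mechanisms above, from colliding, so that each principal part isolates a single function. Extracting the relevant coefficient, organised according to the parity of $n$, then gives that for every $m\in\N\cup\{0\}$, every $k$ and every $x\in\mathcal O$,
\begin{equation}
\int_{\R^n}v_k(y)\,|x-y|^{2m}\,dy=0\quad(n\text{ even}),\qquad\int_{\R^n}v_k(y)\,|x-y|^{2m+1}\,dy=0\quad(n\text{ odd}).
\end{equation}

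The third step converts these vanishing moments into the vanishing of the $v_k$; this is the only place where super-exponential decay, rather than merely Schwartz decay, is needed. For a fixed $x\in\mathcal O$, writing the moments in polar coordinates gives $\int_{\R^n}v_k(y)|x-y|^{\ell}\,dy=\int_0^\infty r^{\ell+n-1}\,g_{k}(r)\,dr$, where $g_{k}(r):=\int_{S^{n-1}}v_k(x+r\omega)\,d\omega$ is the spherical mean, an even function of $r$ that inherits the super-exponential decay \eqref{exp_decay}. Setting $\Phi(r):=|r|^{n-1}g_{k}(r)$ when $n$ is even and $\Phi(r):=|r|^{n}g_{k}(r)$ when $n$ is odd, the moment relations above say exactly that $\int_{\R}r^{2m}\Phi(r)\,dr=0$ for all $m\ge 0$; since $\Phi$ is even, all of its moments vanish, hence all Taylor coefficients at the origin of its Fourier transform $\widehat\Phi$ vanish. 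But the super-exponential decay makes the integral defining $\widehat\Phi$ converge for every complex frequency, so $\widehat\Phi$ is entire, whence $\widehat\Phi\equiv 0$ and $g_{k}\equiv 0$. Thus all spherical averages of $v_k$ over spheres centred at points of $\mathcal O$ vanish, and a support theorem for the spherical mean transform (Proposition~\ref{prop_reduction from SM transform}) — the injectivity of this transform on rapidly decaying functions whenever the set of centres contains a nonempty open set — yields $v_k\equiv 0$ on $\R^n$ for each $k$.

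The crux, and the point where the noncompact Euclidean setting departs from the closed-manifold argument of \cite{FKU24}, is the second step: because the heat semigroup on $\R^n$ decays only polynomially at large times, $F$ fails to be globally holomorphic, so one must first construct its meromorphic continuation and then carry out a careful singularity analysis, in which the arithmetic condition \eqref{exp_condition_alpha} is exactly what prevents the contributions of different terms from overlapping. A secondary difficulty is that the final step alone forces the strengthening from Schwartz to super-exponential decay, needed to make $\widehat\Phi$ entire.
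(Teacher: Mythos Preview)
Your proposal is correct and follows the same three-step architecture as the paper: reduce to $F\equiv 0$ via Pila's interpolation theorem, read off moment conditions from the poles of the meromorphic continuation, and finish with the spherical mean transform. The minor variations you introduce are all valid: your identity $F(z)=\sum_k\frac{\sin(\pi\alpha_k)}{\sin(\pi(z+\alpha_k))}\big((-\Delta)^{z+\alpha_k}v_k\big)(x)$ is a clean way to see $F(m)=0$ (the paper does the equivalent computation by integrating by parts $m$ times in the heat integral), your description of the continuation via the large-time heat expansion is an alternative to the paper's direct change of variables leading to the closed formula in Lemma~\ref{lem_meromorphic}, and your even extension of the spherical average in Step~III replaces the paper's one-sided extension plus parity argument. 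One small point worth tightening in your Step~II: in even dimensions the two pole families $z=-\tfrac{n}{2}-j-\alpha_k$ and $z=-m-\alpha_k$ coalesce into double poles, so the moment $\mu_j^k(x)$ sits in the order-two coefficient, which is exactly what the paper isolates by multiplying by $(z-z_0)^2$; your narrative treats the two families as if they were always separate, but the conclusion you draw is still correct once this is accounted for.
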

			At first glance, the above theorem may appear slightly weaker than our entanglement principle of Theorem \ref{thm: ent}, since it imposes more regularity and decay on the functions and the exponents are restricted to $(0,1)$. As we will see in a moment, a mollifier argument allows us to show that Theorem~\ref{thm: ent} can be proven from this weaker version. 
			\begin{proof}[Proof of Theorem~\ref{thm: ent} via Theorem~\ref{thm_ent_smooth}]
				We will assume that the hypothesis of Theorem~\ref{thm: ent} is satisfied. Assume without loss of generality that $\{u_k\}_{k=1}^N\subset H^{-r}(\R^n)$ for some $r\in \R$. Let $\phi \in C^{\infty}_0(\R^n)$ be a nonnegative function with compact support inside the open unit ball centered at the origin such that $\|\phi\|_{L^1(\R^n)}=1$. We fix a nonempty open set $\widetilde{\mathcal O} \Subset O$ so that
				\begin{equation}
					\label{tilde_O}
					\textrm{dist}(x, \R^n\setminus \mathcal O)>\epsilon_0 \qquad \forall\, x\in \overline{\widetilde{\mathcal O}}
				\end{equation}
				for some $\epsilon_0\in (0,1).$ Define, for each $\epsilon \in (0,\epsilon_0)$ the function 
				$$\psi_\epsilon(x) := \epsilon^{-n}\phi(\epsilon^{-1}x).$$
				Next, we define for each $x\in \R^n$, and each $\epsilon \in (0,\epsilon_0)$, the function $\wt v_{k,\epsilon}\in C^{\infty}(\R^n)$ by
				$$ \wt v_{k,\epsilon}(x)=  b_k\,\LC u_k\ast \psi_\epsilon\RC(x):=b_k \langle u_k(\cdot) ,\psi_\epsilon(x-\cdot)\rangle\quad k=1,\ldots,N,$$
				where $\langle \cdot,\cdot\rangle$ denotes the sesquilinear pairing between $H^{-r}(\R^n)$ and $H^r(\R^n)$ as explained in Section~\ref{sec: preliminary: fcn}. As $u_k$ with $k=1,\ldots,N$ all vanish on $\mathcal O$, we obtain in view of \eqref{tilde_O} that 
				\begin{equation}
					\label{v_k_zero}
					\wt v_{k,\epsilon}(x)=0 \quad \forall\, x\in \widetilde{\mathcal O} \quad \epsilon \in (0,\epsilon_0) \quad k=1,\ldots,N.
				\end{equation}
				Furthermore, given any multi-index $\beta \in \LC \N \cup \{0\}\RC ^n$ and in view of the fact that the distributions $\{u_k\}_{k=1}^N$ all have super-exponential decay in the sense of Definition~\ref{def_exp}, we obtain for each $x\in \R^n$ with $|x|>2$ and each $k=1,\ldots,N$,
				$$
				\left|D^\beta \wt v_{k,\epsilon}(x)\right| = \left| b_k \langle u_k,D^\beta \psi_\epsilon(x-\cdot)\rangle\right| \leq \left|b_k\right|\,C \,e^{-\rho \,(|x|-1)^\gamma}  \left\|\psi_\epsilon\right\|_{H^{r+|\beta|}(\R^n)},
				$$
				where we used the fact that $\psi_{\epsilon}(x-\cdot)$ is supported outside the closed ball $B_{|x|-1}(0)$ together with Definition~\ref{def_exp} with the choice $R=|x|-1$. Therefore, by modifying the constant $C>0$ above we deduce that there exists $C_\beta>0$ (depending on $\beta$ and $\epsilon$) such that
				\begin{equation}\label{v_beta_decay}
					\left|D^\beta \wt v_{k,\epsilon}(x)\right| \leq C_\beta \,e^{-{\rho 2^{-\gamma}\, |x|^\gamma}}, \quad \text{for all $x\in \R^n$ and all $k=1,\ldots,N.$}
				\end{equation}

				Next, let us write 
				$
				s_k = \lfloor s_k \rfloor +\alpha_k,
				$
				where $\lfloor s_k\rfloor$ is the greatest integer not exceeding $s_k$ and $\alpha_k \in (0,1)$ is its fractional part. The reason that the fractional parts $\alpha_k$ are never zero here is due to \ref{exponent condition}. Define
				$$
				v_{k,\epsilon}(x) = b_k \,(-\Delta)^{\lfloor s_k\rfloor}\wt{v}_{k,\epsilon}
				\quad k=1,\ldots,N \quad \epsilon \in (0,\epsilon_0).       $$
				It is now straightforward to see that the hypothesis of Theorem~\ref{thm_ent_smooth} is satisfied with $\{v_k\}_{k=1}^N$ in its statement replaced with the functions $\{v_{k,\epsilon}\}_{k=1}^N$ and with $\mathcal O$ in its statement replaced with $\widetilde{\mathcal O}$. Indeed, thanks to \eqref{v_beta_decay}, we see that these functions enjoy the super-exponential decays stated in \eqref{exp_decay} and also that they satisfy the condition \eqref{condition_ent}. Moreover, by \ref{exponent condition}, the fractional parts of $s_k$ all belong to $(0,1)$ and additionally satisfy \eqref{exp_condition_alpha}. Thus, applying Theorem~\ref{thm_ent_smooth} to these functions, we conclude that there holds
				$$
				(-\Delta)^{\lfloor s_k\rfloor}\wt{v}_{k,\epsilon}=0 \quad \text{in $\R^n$ for all $k=1,\ldots,N$.}
				$$
					The latter equation implies that $\tilde{v}_{k,\epsilon}$ is identical to zero. Indeed, this is trivial to see if $\lfloor s_k\rfloor=0$ and in the other case that $\lfloor s_k\rfloor\in \N$, it follows from applying the unique continuation principle for the Laplace operator on $\R^n$. Therefore, 
				$$ 
				\langle u_k(\cdot), \psi_\epsilon(x-\cdot)\rangle=0 \quad \text{in $\R^n$ and all $k=1,\ldots,N$.} 
				$$
				Finally, we obtain the desired claim by letting $\epsilon$ approach zero and noting that $b_k\neq 0$ for $k=1,\ldots,N$.
			\end{proof}
			
			The rest of this section is concerned with proving Theorem~\ref{thm_ent_smooth}. Thus, we will assume throughout the remainder of the section that $\{\alpha_k\}_{k=1}^N\subset (0,1)$ and that $\{v_k\}_{k=1}^N\subset C^{\infty}(\R^n)$ are as stated in the hypothesis of Theorem~\ref{thm_ent_smooth}. In the rest of this section, we make the standing assumption that $\omega \Subset \mathcal O$ is a fixed nonempty bounded open set and that there holds 
			\begin{equation}\label{kappa}
				\textrm{dist}(\overline{\omega},\R^n\setminus \mathcal O)\geq 2\kappa >0,
			\end{equation}
			for some constant $\kappa\in (0,1)$. For our purposes, it suffices to think of $\omega$ as a sufficiently small neighbourhood of some fixed point inside $\mathcal O$.
			
			\subsection{Analytic interpolation in the right half-plane}

			We remark that throughout the remainder of Section~\ref{sec: entanglement}, the notation $\log(z)$, $z\in \C$ stands for the principal branch of the logarithm function. Also, given any $a>0$, the notation $a^z$ stands for $e^{a\log z}$. We begin this section with a few lemmas.
			
			\begin{lemma}\label{lem_analytic}
				Given each fixed $x\in \omega$, the function 
				$$ F: \{ z\in \C\,:\, \mathrm{Re}(z) \geq 0\} \to \C$$
				defined via
				\begin{equation}\label{F(z)}
					F(z):=  \sum_{k=1}^N \frac{\Gamma(z+1+\alpha_k)}{\Gamma(-\alpha_k)\Gamma(1+\alpha_k)}\int_0^\infty (e^{t\Delta} v_k)(x)\, t^{-(z+1+\alpha_k)}\, dt
				\end{equation}
				is holomorphic.
			\end{lemma}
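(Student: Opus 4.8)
The plan is to reduce the statement to showing that, for each $k$, the integral
$$ G_k(z):=\int_0^\infty (e^{t\Delta}v_k)(x)\,t^{-(z+1+\alpha_k)}\,dt $$
is holomorphic on $\{\mathrm{Re}(z)\ge 0\}$, since the prefactor $\Gamma(z+1+\alpha_k)/\bigl(\Gamma(-\alpha_k)\Gamma(1+\alpha_k)\bigr)$ is holomorphic there: the constants $\Gamma(-\alpha_k)$ and $\Gamma(1+\alpha_k)$ are finite and nonzero because $\alpha_k\in(0,1)$, while the poles of $z\mapsto\Gamma(z+1+\alpha_k)$ occur only at $z\in\{-1-\alpha_k,-2-\alpha_k,\dots\}$, all in the open left half-plane. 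For $G_k$ I would invoke the standard fact that if $f(z,t)$ is holomorphic in $z$ for each fixed $t>0$, measurable in $t$, and on every compact subset $K$ of an open set $U\subset\C$ is dominated by a fixed function in $L^1(0,\infty)$, then $z\mapsto\int_0^\infty f(z,t)\,dt$ is holomorphic on $U$. Here $f(z,t)=(e^{t\Delta}v_k)(x)\,t^{-(z+1+\alpha_k)}$ is entire in $z$ for each $t>0$, so the task is to produce such dominating functions on compacts in a neighbourhood of $\{\mathrm{Re}(z)\ge 0\}$; I split $\int_0^\infty=\int_0^1+\int_1^\infty$.

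For the large-time piece I would use the elementary bound $|(e^{t\Delta}v_k)(x)|\le(4\pi t)^{-n/2}\|v_k\|_{L^1(\R^n)}$, which is meaningful since $\|v_k\|_{L^1(\R^n)}<\infty$ by the decay \eqref{exp_decay} with $\beta=0$. Thus $|f(z,t)|\le C\,t^{-n/2-1-\alpha_k-\mathrm{Re}(z)}$, and on a compact $K$ with $\mathrm{Re}(z)\ge a_K$ this is dominated on $[1,\infty)$ by the integrable function $C\,t^{-n/2-1-\alpha_k-a_K}$ as soon as $a_K>-n/2-\alpha_k$, which holds for any $K\subset\{\mathrm{Re}(z)\ge 0\}$ and slightly beyond. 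The small-time piece is where the argument has genuine content, and I expect this to be the main obstacle: for $\mathrm{Re}(z)\ge 0$ the factor $t^{-(z+1+\alpha_k)}$ is badly nonintegrable at $t=0$, so one must exploit that $v_k$ vanishes near $x$. Since $x\in\omega$ with $\mathrm{dist}(\overline\omega,\R^n\setminus\mathcal O)\ge 2\kappa$ by \eqref{kappa}, and $v_k|_{\mathcal O}=0$ by \eqref{condition_ent}, we have $v_k(y)=0$ whenever $|y-x|<2\kappa$, so
$$ (e^{t\Delta}v_k)(x)=\int_{|y-x|\ge 2\kappa} p_t(x-y)\,v_k(y)\,dy. $$
On this region one writes $\tfrac{|x-y|^2}{4t}=\tfrac{|x-y|^2}{8t}+\tfrac{|x-y|^2}{8t}\ge \tfrac{|x-y|^2}{8t}+\tfrac{\kappa^2}{2t}$ and carries out the remaining Gaussian integral to get $|(e^{t\Delta}v_k)(x)|\le 2^{n/2}\,e^{-\kappa^2/(2t)}\|v_k\|_{L^\infty(\R^n)}$. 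This super-fast decay as $t\to 0^+$ beats the power $t^{-1-\alpha_k-\mathrm{Re}(z)}$ for any bounded $\mathrm{Re}(z)$: on a compact $K$ with $\mathrm{Re}(z)\le b_K$, the integrand is dominated on $(0,1]$ by $C\,e^{-\kappa^2/(2t)}t^{-1-\alpha_k-b_K}\in L^1(0,1)$.

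Putting the two regimes together yields, on every compact subset of an open half-plane containing $\{\mathrm{Re}(z)\ge 0\}$, an $L^1(0,\infty)$ dominating function, so the cited parameter-integral theorem shows $G_k$ is holomorphic there, hence so is $F=\sum_{k=1}^N\frac{\Gamma(z+1+\alpha_k)}{\Gamma(-\alpha_k)\Gamma(1+\alpha_k)}\,G_k$, in particular on $\{\mathrm{Re}(z)\ge 0\}$. I would also remark that the only property of $v_k$ used here is the finiteness of $\|v_k\|_{L^1(\R^n)}$ and $\|v_k\|_{L^\infty(\R^n)}$, so Schwartz-class decay (indeed mere boundedness and integrability) suffices for this lemma; the full super-exponential hypothesis is not needed at this stage.
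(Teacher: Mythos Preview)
Your proof is correct and follows essentially the same approach as the paper: both reduce to showing each $G_k$ is holomorphic by splitting the integral at $t=1$, using the $t^{-n/2}$ heat-kernel decay for large $t$ and the Gaussian decay $e^{-c/t}$ (coming from $v_k|_{\mathcal O}=0$ and the separation \eqref{kappa}) for small $t$. The only cosmetic difference is that the paper verifies holomorphy by explicit difference quotients via a Taylor bound on $t^{-h}$, whereas you invoke the standard parameter-integral holomorphy criterion with an $L^1$ majorant on compacts; the underlying estimates are the same.
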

			\begin{proof}
				As each of the functions $z\mapsto \Gamma(z+1+\alpha_k)$, $k=1,\ldots,N$ are holomorphic in the right half-plane $\left\{ z\in \C: \, \mathrm{Re}(z)\geq 0 \right\}$, it suffices to show that for each $k=1,\ldots,N$ the function 
				\begin{equation}\label{g_def}
					G_k(z):= \int_0^{\infty} (e^{t\Delta}v_k)(x)\,t^{-(z+1+\alpha_k)}\,dt,\end{equation}
				is also holomorphic in the right half-plane. It is straightforward to see that for each $z$ in the right half-plane the above integrands are absolutely integrable. Our task is to show that they depend analytically on $z$. We write
				\begin{equation}\label{g_rand} 
					\begin{split}
						G_k(z) := G_{1,k}(z)+G_{2,k}(z):= \int_0^{1} (e^{t\Delta}v_k)(x)\,t^{-(z+1+\alpha_k)}\,dt + \int_1^{\infty} (e^{t\Delta}v_k)(x)\,t^{-(z+1+\alpha_k)}\,dt,
					\end{split}
				\end{equation}
				and proceed to show that for each fixed $x\in \omega$, each of $G_{j,k}$, $j=1,2$, $k=1,\ldots, N$ depend analytically on $z$ with $\textrm{Re}(z)\geq 0$. We begin by noting that given any $h\in \C$ and any $t>0$ there holds
				\begin{equation}\label{complex_plane_bound}
					\left|t^{-h}-1+h\log(t)\right| \leq \frac{1}{2} (\log t)^2\, |h|^2\, e^{|h\log t|} \quad \forall\, h\in \C,
				\end{equation}
				where we used Taylor series approximation for $e^{-h\,\log t}$ around $h=0$ with two terms. Recalling that $\omega \Subset \mathcal O$ and the bound \eqref{kappa}, we also record the following straightforward point-wise bound for heat semigroups on $\R^n$,
				\begin{equation}
					\label{heat_bound_rand}
					\left|e^{t\Delta}v_k(x)\right| \leq \frac{\left\|v_k\right\|_{L^1(\R^n)} }{(4\pi t)^{\frac{n}{2}}}\, e^{-\frac{\kappa^2}{t}}\, \qquad t>0\quad x\in \omega.
				\end{equation}

				For the function $G_{1,k}$ given by \eqref{g_rand}, we note by using \eqref{complex_plane_bound} that given each $z\in \C$ and each $h\in \C$ with $|h|< 1$, we have
				\begin{equation}
					\begin{split}
						& \bigg|G_{1,k}(z+h)- G_{1,k}(z) + h \underbrace{\int_0^1 (e^{t\Delta}v_k)(x)\,t^{-(z+1+\alpha_k)}\,(\log t)\,dt}_{\textrm{Absolutely convergent for all $z\in \C$}} \bigg| \\
						&\qquad \leq \frac{\|v_k\|_{L^1(\R^n)}}{2(4\pi)^{\frac{n}{2}}}|h|^2 \underbrace{\int_0^1 (\log t)^2\,t^{-\frac{n}{2}-2-|z|-\alpha_k}\, e^{-\frac{\kappa^2}{t}}\,dt}_{\textrm{Absolutely convergent for all $z\in \C$}}
					\end{split}    
				\end{equation}
				showing in fact that the function $G_{1,k}(z)$ is holomorphic everywhere on $\C$. On the other hand, for the function $G_{2,k}(z)$ we note that given each $z$ with $\textrm{Re}(z)\geq 0$ and each $|h|<1$ there holds
				\begin{equation}
					\begin{split}
						&   \bigg|G_{2,k}(z+h)- G_{2,k}(z) + h \int_1^\infty (e^{t\Delta}v_k)(x)\,t^{-(z+1+\alpha_k)}\,(\log t)\,dt \bigg| \\
						&\qquad \leq \frac{\|v_k\|_{L^1(\R^n)}}{2(4\pi)^{\frac{n}{2}}}|h|^2 \int_1^\infty (\log t)^2 t^{-\frac{n}{2}-\alpha_k}\,dt
					\end{split}
				\end{equation}
				showing indeed that the function $G_{2,k}$ is holomorphic on $\textrm{Re}(z)\geq 0$.
			\end{proof}
			
			\begin{lemma}\label{lem_meromorphic}
				Given each fixed $x\in \Omega$, the function $F$ defined by \eqref{F(z)} admits a meromorphic extension to $\C$ (with isolated poles of order at most two) given by 
				\begin{equation}\label{F_def}
					F(z)= \sum_{k=1}^N \frac{4^{\alpha_k+z}}{\pi^{\frac{n}{2}}} \, \frac{\Gamma\LC z+1+\alpha_k\RC  \Gamma \LC z +\frac{n}{2}+\alpha_k\RC}{\Gamma(-\alpha_k)\,\Gamma(1+\alpha_k)} \int_{\R^n\setminus \mathcal{O}} \frac{v_k(y)}{\abs{x-y}^{n+2\alpha_k +2z}}\, dy.
				\end{equation}
			\end{lemma}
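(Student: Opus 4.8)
The plan is to evaluate each summand of $F(z)$ in closed form by inserting the heat-kernel representation \eqref{heat kernel representation} into \eqref{F(z)} and interchanging the order of integration. Fix $x\in\omega$ as in the standing assumptions, and recall from \eqref{g_def}--\eqref{g_rand} that $F(z)=\sum_{k=1}^N \tfrac{\Gamma(z+1+\alpha_k)}{\Gamma(-\alpha_k)\Gamma(1+\alpha_k)}\,G_k(z)$ with $G_k(z)=\int_0^\infty (e^{t\Delta}v_k)(x)\,t^{-(z+1+\alpha_k)}\,dt$, valid for $\mathrm{Re}(z)\geq 0$ by Lemma~\ref{lem_analytic}. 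Since $v_k$ is continuous and vanishes on $\mathcal O$ by \eqref{condition_ent}, we have $(e^{t\Delta}v_k)(x)=\int_{\R^n\setminus\mathcal O}p_t(x-y)\,v_k(y)\,dy$; substituting this and applying Fubini's theorem would give
$$G_k(z)=\frac{1}{(4\pi)^{n/2}}\int_{\R^n\setminus\mathcal O}v_k(y)\LC\int_0^\infty e^{-\frac{|x-y|^2}{4t}}\,t^{-(z+1+\alpha_k+n/2)}\,dt\RC dy.$$
The interchange is legitimate for every $z$ with $\mathrm{Re}(z)\geq 0$: for $y\in\R^n\setminus\mathcal O$ one has $|x-y|\geq 2\kappa>0$ by \eqref{kappa}, so the inner $t$-integral converges near $t=0$ (the Gaussian factor defeats any negative power of $t$) and near $t=\infty$ (the exponent has real part $<-1$), while the outer $y$-integral converges thanks to the super-exponential decay \eqref{exp_decay}. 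This interchange, together with the borderline convergence on the line $\mathrm{Re}(z)=0$, is the only point requiring any care, and it is routine given the uniform lower bound on $|x-y|$.

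Next I would evaluate the inner integral. With $a:=|x-y|^2/4>0$, the substitution $u=a/t$ gives $\int_0^\infty e^{-a/t}t^{-\beta}\,dt=a^{1-\beta}\Gamma(\beta-1)$; taking $\beta=z+1+\alpha_k+\tfrac n2$ and using $a^{-(z+\alpha_k+n/2)}=4^{\,z+\alpha_k+n/2}\,|x-y|^{-(n+2\alpha_k+2z)}$ yields
$$G_k(z)=\frac{4^{\,z+\alpha_k}}{\pi^{n/2}}\,\Gamma\!\LC z+\tfrac n2+\alpha_k\RC\int_{\R^n\setminus\mathcal O}\frac{v_k(y)}{|x-y|^{n+2\alpha_k+2z}}\,dy.$$
Multiplying by $\tfrac{\Gamma(z+1+\alpha_k)}{\Gamma(-\alpha_k)\Gamma(1+\alpha_k)}$ and summing over $k$ reproduces precisely the right-hand side of \eqref{F_def}, so the two expressions agree on $\{\mathrm{Re}(z)\geq 0\}$.

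It then remains to check that the right-hand side of \eqref{F_def} is meromorphic on all of $\C$ with isolated poles of order at most two; by uniqueness of meromorphic continuation this identifies it as the asserted extension of $F$. For each $k$, the integral $H_k(z):=\int_{\R^n\setminus\mathcal O}v_k(y)\,|x-y|^{-(n+2\alpha_k+2z)}\,dy$ defines an \emph{entire} function of $z$: the bound $|x-y|\geq 2\kappa$ together with the super-exponential (hence Schwartz) decay of $v_k$ makes the integral absolutely convergent locally uniformly in $z$, and holomorphy follows by differentiation under the integral sign (or Morera's theorem), since $z\mapsto|x-y|^{-(n+2\alpha_k+2z)}=e^{-(n+2\alpha_k+2z)\log|x-y|}$ is entire. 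The factor $4^{\alpha_k+z}$ is likewise entire, so the only poles of the $k$-th summand come from $\Gamma(z+1+\alpha_k)\,\Gamma(z+\tfrac n2+\alpha_k)$, which is meromorphic with simple poles at $z=-1-\alpha_k-m$ and $z=-\tfrac n2-\alpha_k-m$, $m\in\N\cup\{0\}$; this product has poles of order at most two (order exactly two only when a pole of one factor meets one of the other, which under the standing hypotheses on the $\alpha_k$ forces $n$ even), and the poles arising from distinct summands are distinct because the $\alpha_k$ are distinct elements of $(0,1)$ and, for odd $n$, satisfy \eqref{exp_condition_alpha}. This gives the meromorphic extension with isolated poles of order at most two. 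I do not expect any deep obstacle here: the lemma is essentially a Fubini-plus-substitution computation, and the only delicate points are the justification of the interchange at the boundary of the half-plane and the entirety of $H_k$, both handled by the lower bound $|x-y|\geq 2\kappa$ and the decay of $v_k$.
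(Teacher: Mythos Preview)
Your proposal is correct and follows essentially the same route as the paper: insert the heat-kernel representation, apply Fubini (justified via \eqref{kappa} and the decay \eqref{exp_decay}), evaluate the inner $t$-integral by the substitution $u=a/t$ to produce $\Gamma(z+\tfrac{n}{2}+\alpha_k)$, and then observe that the remaining $y$-integral is entire so that only the Gamma factors contribute poles. Your additional remark that poles from distinct summands cannot collide (using $\alpha_k\in(0,1)$ distinct and, for odd $n$, \eqref{exp_condition_alpha}) is a nice explicit justification of the ``order at most two'' clause, which the paper leaves implicit.
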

			
			\begin{proof}
				By direct computation, we obtain for each $k=1,\ldots,N$ and each $z\in \C$ with $\mathrm{Re}(z)\geq 0$ that
				\begin{equation}
					\begin{split}
						&\quad \,  \int_0^\infty  \LC e^{t\Delta}v_k\RC(x) e^{-(z+\alpha_k+1)}\, dt \\
						&= \underbrace{\frac{1}{(4\pi)^{n/2}}\int_0^\infty \LC \int_{\R^n\setminus \mathcal{O}}\frac{1}{t^{n/2}}e^{-\frac{\abs{x-y}^2}{4t}} v_k(y)\, dy\RC t^{-(\alpha_k+z+1)}\, dt}_{\text{By Fubini's theorem, heat kernel formula \eqref{heat kernel representation} and }v_k=0 \text{ in }\mathcal{O}} \\
						&= \underbrace{\int_{\R^n\setminus \mathcal O} \left(\int_0^\infty t^{-(\frac{n}{2}+\alpha_k+z+1)} e^{-\frac{\abs{x-y}^2}{4t}}\, dt\right)v_k(y)\,dy}_{\text{Absolutely convergent for }\mathrm{Re}(z)\geq 0}  \\
						&= \frac{4^{\alpha_k+z}}{\pi^{\frac{n}{2}}}  \underbrace{\bigg( \int_0^\infty t^{\frac{n}{2}+\alpha_k+z-1}\,e^{-t}\, dt\bigg) }_{\text{By change of variable }} \underbrace{\int_{\R^n\setminus \mathcal{O}} \frac{v_k(y)}{|x-y|^{n+2\alpha_k+2z}}\, dy}_{\textrm{Absolutely convergent for all $z\in \C$ by} \, \eqref{exp_decay}}\\  
						&= \frac{4^{\alpha_k+z}}{\pi^{\frac{n}{2}}} \Gamma\LC z+\frac{n}{2}+\alpha_k\RC \int_{\R^n\setminus \mathcal{O}} \frac{v_k(y)}{|x-y|^{n+2\alpha_k+2z}}\, dy,
					\end{split}
				\end{equation}
				where we use $x\in \omega$ and $y\in \R^n\setminus \mathcal{O}$ so that \eqref{kappa} holds. 
				
				Next, we aim to show that the previous expression derived for $\mathrm{Re}(z)\geq 0$ can be viewed on the entire complex plane. Indeed, analogously to the proof of Lemma~\ref{lem_analytic} and due to the fast  decay of $v_k$ given by \eqref{exp_decay} we have that the mapping 
				$$ z \mapsto \int_{\R^n\setminus \mathcal{O}} \frac{v_k(y)}{|x-y|^{n+2\alpha_k+2z}}\, dy, \quad k=1,\ldots,N, \quad x\in \omega$$
				is holomorphic for all $z\in \C$ (let us point out that only Schwartz decay for $v_k$ is needed here). The Gamma function is also meromorphic on the entire complex plane. We have thus completed the proof of the lemma. 
			\end{proof}
			
			The following lemma also appears in \cite{FKU24} and is a consequence of the upper bound for the Gamma function, see \cite[formula (2.1.19) on page 34]{Paris_Kaminski_book}; see also \cite[page 300]{Olverbook},
			\begin{equation}
				\label{eq_bound_Gamma_f}
				|\Gamma(z)| \le \sqrt{2\pi} |z|^{a-\frac{1}{2}} e^{-\frac{\pi}{2}|b|} e^{\frac{1}{6} |z|^{-1}},
			\end{equation}
			valid for all $ z = a + ib \in \C$, where $a\ge 0$.
			
			\begin{lemma}[Lemma 3.4 in \cite{FKU24}]\label{lem_H_bounds}
				Let $C:=\sqrt{2\pi}e^{\frac{1}{6}}$. Given $s \in (0,\infty)$, the function 
				$$ H(z):=\Gamma(z+1+s),$$
				is meromorphic on $\mathbb{C}$, with its only singularities being simple poles at $z=-k-1-s$ for $k=0,1,2,\dots$, and additionally satisfies 
				\begin{enumerate}[(1)]
					\item\label{item 1 lem 3.4} {$|H(\textrm{\em i}b)|\leq C (2|b|)^{s+\frac{1}{2}} e^{-\frac{\pi}{2}|b|}$ for all $b\in \R$ such that $|b|\ge 1+s$,}
					\item\label{item 2 lem 3.4} {$|H(a)| \leq C e^{a \log a}e^{a\log 2+(s+\frac{1}{2})\log(2a)}$ for all $a\ge 1+s$,} 
					\item\label{item 3 lem 3.4} {$|H(z)|\leq C e^{2|z|\log(2|z|)}$ for all $z\in \{a+\textrm{\em i}b\in \C :\, a\ge  0,\  b\in \R\}$ such that $|z|\ge 1+s$}.
				\end{enumerate} 
			\end{lemma}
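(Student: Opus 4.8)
The plan is to derive all three estimates from the single Stirling-type bound \eqref{eq_bound_Gamma_f}, applied to $\Gamma$ evaluated at the shifted point $w := z+1+s$. Since $s>0$, whenever $\mathrm{Re}(z)\ge 0$ one has $\mathrm{Re}(w)=\mathrm{Re}(z)+1+s\ge 1+s\ge 1$, so \eqref{eq_bound_Gamma_f} applies and, moreover, $|w|\ge \mathrm{Re}(w)\ge 1$, which forces $e^{\frac16|w|^{-1}}\le e^{1/6}$. Therefore
\[
|H(z)|=|\Gamma(w)|\le \sqrt{2\pi}\,e^{1/6}\,|w|^{\,\mathrm{Re}(w)-\frac12}\,e^{-\frac{\pi}{2}|\mathrm{Im}(z)|}=C\,|w|^{\,\mathrm{Re}(w)-\frac12}\,e^{-\frac{\pi}{2}|\mathrm{Im}(z)|},
\]
and each of the three claims will follow by substituting the relevant form of $z$ and bounding $|w|$ and $\mathrm{Re}(w)$ via the corresponding hypothesis. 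The meromorphy of $H$ and the fact that its only singularities are simple poles at $z=-k-1-s$, $k=0,1,2,\dots$, are immediate from the analogous classical facts for $\Gamma$, since $w$ is a non-positive integer $-k$ precisely when $z=-k-1-s$.

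For the first estimate take $z=\mathrm{i}b$, so $w=(1+s)+\mathrm{i}b$ with $\mathrm{Re}(w)=1+s$ and $\mathrm{Im}(w)=b$; then $|w|=\sqrt{(1+s)^2+b^2}\le \sqrt{2}\,|b|\le 2|b|$ using $|b|\ge 1+s$, and since $\mathrm{Re}(w)-\frac12=s+\frac12>0$ we get $|w|^{s+\frac12}\le (2|b|)^{s+\frac12}$, which is the claim. For the second estimate take $z=a\ge 1+s$ real, so $w=a+1+s$ is real with $\mathrm{Im}(z)=0$; the hypothesis $a\ge 1+s$ gives $w\le 2a$, hence $|w|^{a+s+\frac12}\le (2a)^{a+s+\frac12}=(2a)^a(2a)^{s+\frac12}=e^{a\log a}\,e^{a\log 2}\,(2a)^{s+\frac12}$, exactly the stated bound. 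For the third estimate keep $z=a+\mathrm{i}b$ with $a\ge 0$ and $|z|\ge 1+s$, and discard the harmless factor $e^{-\frac{\pi}{2}|b|}\le 1$; then $|w|=|z+1+s|\le |z|+1+s\le 2|z|$ (again by $|z|\ge 1+s$), while $a\le |z|$ and $s\le |z|-1$ give $\mathrm{Re}(w)-\frac12=a+s+\frac12\le 2|z|-\frac12\le 2|z|$; since $2|z|\ge 2>1$, monotonicity yields $|w|^{a+s+\frac12}\le (2|z|)^{a+s+\frac12}\le (2|z|)^{2|z|}=e^{2|z|\log(2|z|)}$, which is the claim.

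The only point requiring attention is the elementary bookkeeping: each of the comparisons $\sqrt{(1+s)^2+b^2}\le 2|b|$, $a+1+s\le 2a$, $|z|+1+s\le 2|z|$, and $a+s+\tfrac12\le 2|z|$ must be matched to the precise hypothesis ($|b|\ge 1+s$, $a\ge 1+s$, and $|z|\ge 1+s$ respectively) that makes it valid, together with the uniform observation $|w|\ge 1$ used to absorb the factor $e^{\frac16|w|^{-1}}$ into $C=\sqrt{2\pi}e^{1/6}$. Beyond invoking \eqref{eq_bound_Gamma_f} there is no genuine analytic content, so no step should present a real obstacle.
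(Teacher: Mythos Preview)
Your proof is correct and follows precisely the route indicated in the paper: the paper does not give a detailed argument but simply notes that the lemma is a consequence of the Stirling-type bound \eqref{eq_bound_Gamma_f}, and you carry out exactly that derivation. The bookkeeping (the comparisons $|w|\le 2|b|$, $a+1+s\le 2a$, $|z|+1+s\le 2|z|$, $a+s+\tfrac12\le 2|z|$, and the absorption of $e^{\frac{1}{6}|w|^{-1}}$ into $C$) is all handled correctly.
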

			
			\begin{lemma}[cf. Lemma 3.5 \cite{FKU24}]
				\label{lem_analytic_bounds}
				Given each $k=1,\ldots,N$ and each fixed $x\in \omega$,
				there exist constants $c_1, c_2 > 0$, depending on $\kappa\in (0,1)$ in \eqref{kappa}, $\|v_k\|_{L^1(\R^n)}$ and on $\alpha_k$ such that the holomorphic function $G_k: \{\mathrm{Re}(z)\geq 0\}\to \C$ defined by \eqref{g_def} satisfies the bounds
				\begin{enumerate}[(i)]
					\item\label{item i G_k} $\left|G_k(\mathrm{i}b)\right| \leq c_1$ for all $b \in \mathbb{R}$,
					\item\label{item ii G_k} $\left|G_k(a)\right| \leq c_1\,e^{c_2 a}\, e^{a \log a}$ for all $a \ge 1$,
					\item\label{item iii G_k} $\left|G_k(z)\right| \leq c_1\,e^{c_2|z|} e^{2|z| |\log z|}$ for all $z \in \{a + \mathrm{i}b \in \mathbb{C}: \, a \ge 0,\  b \in \mathbb{R}\}$ with $|z| \ge 1$.
				\end{enumerate}
			\end{lemma}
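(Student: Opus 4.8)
\section*{Proof proposal for Lemma~\ref{lem_analytic_bounds}}

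The plan is to estimate $G_k(z)$ directly from its definition \eqref{g_def}, using the Gaussian-in-$t$ pointwise heat bound \eqref{heat_bound_rand}, which is available precisely because $v_k$ vanishes on $\mathcal O$ and $x\in\omega$ lies at distance at least $2\kappa$ from $\R^n\setminus\mathcal O$. I would split the integral at $t=1$ exactly as in \eqref{g_rand}, writing $G_k=G_{1,k}+G_{2,k}$ with the integrals taken over $(0,1]$ and $[1,\infty)$ respectively, and treat the two pieces separately. For the tail $G_{2,k}$ the key observation is that for $\mathrm{Re}(z)\geq 0$ and $t\geq 1$ one has $|t^{-(z+1+\alpha_k)}|=t^{-(\mathrm{Re}(z)+1+\alpha_k)}\leq t^{-(1+\alpha_k)}$, so that \eqref{heat_bound_rand} gives $|G_{2,k}(z)|\leq \frac{\|v_k\|_{L^1(\R^n)}}{(4\pi)^{n/2}}\int_1^\infty t^{-\frac n2-1-\alpha_k}\,dt$, a finite constant independent of $z$ (this uses $n\geq 2$, so the exponent $\frac n2+1+\alpha_k$ exceeds $1$). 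Hence $G_{2,k}$ is uniformly bounded on the closed right half-plane and contributes only to the constant $c_1$ in all three estimates.

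The heart of the matter is the head $G_{1,k}$. Here again $|t^{-(z+1+\alpha_k)}|=t^{-(a+1+\alpha_k)}$ depends only on $a:=\mathrm{Re}(z)$ and not on $\mathrm{Im}(z)$, so \eqref{heat_bound_rand} yields
\[
|G_{1,k}(z)|\leq \frac{\|v_k\|_{L^1(\R^n)}}{(4\pi)^{n/2}}\int_0^1 t^{-\frac n2-a-1-\alpha_k}\,e^{-\kappa^2/t}\,dt.
\]
The substitution $u=1/t$ (together with extending the range back to $(0,\infty)$ and rescaling by $\kappa^2$) turns the right-hand side into $\frac{\|v_k\|_{L^1(\R^n)}}{(4\pi)^{n/2}}\int_1^\infty u^{\frac n2+a+\alpha_k-1}e^{-\kappa^2 u}\,du\leq \frac{\|v_k\|_{L^1(\R^n)}}{(4\pi)^{n/2}}\,\kappa^{-2(a+\frac n2+\alpha_k)}\,\Gamma\!\bigl(a+\tfrac n2+\alpha_k\bigr)$. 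For $a=0$, i.e. $z=\mathrm{i}b$, this is a constant independent of $b$, which together with the bound on $G_{2,k}$ proves \eqref{item i G_k}. For the growth estimates, since $\kappa\in(0,1)$ we have $\kappa^{-2(a+\frac n2+\alpha_k)}=e^{-2(\log\kappa)(a+\frac n2+\alpha_k)}\leq C\,e^{c\,a}$, and for the Gamma factor I would invoke Lemma~\ref{lem_H_bounds}\eqref{item 2 lem 3.4} with $s=\frac n2+\alpha_k-1$, which is positive thanks to $n\geq 2$: this gives $\Gamma(a+\tfrac n2+\alpha_k)=|H(a)|\leq C\,e^{a\log a}\,e^{a\log 2+(s+\frac12)\log(2a)}\leq C\,e^{a\log a}\,e^{c'a}$ for $a$ large, the polynomial factor $(2a)^{s+\frac12}$ being absorbed via $x^{s+\frac12}\leq e^{(s+\frac12)x}$; the remaining bounded range $1\leq a\leq 1+s$ is trivially absorbed into the constants. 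This proves \eqref{item ii G_k}. For \eqref{item iii G_k} I would additionally use $a\leq|z|$ and monotonicity of $x\mapsto x\log x$ on $[1,\infty)$ to replace $e^{a\log a}$ by $e^{|z|\log|z|}$, and finally the elementary inequality $\log|z|\leq|\log z|$ valid for $|z|\geq 1$ (since $|\log z|^2=(\log|z|)^2+(\arg z)^2$) to conclude $|G_k(z)|\leq c_1\,e^{c_2|z|}\,e^{|z||\log z|}\le c_1\,e^{c_2|z|}\,e^{2|z||\log z|}$, the factor $2$ leaving room to spare; when $a$ stays in the bounded range the same uniform bound as before handles $G_{1,k}$ and the claimed inequality is immediate because its right-hand side is bounded below for $|z|\geq 1$.

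I do not expect a serious conceptual obstacle here; the argument is essentially a careful unpacking of the definition of $G_k$, exploiting the Gaussian decay in $t$ to convert the near-$t=0$ part into a Gamma function, followed by a standard Stirling-type bound. The points requiring mild care are: (a) the role of $n\geq 2$, which guarantees both convergence of the tail integral and positivity of the parameter $s$ needed to apply Lemma~\ref{lem_H_bounds}; (b) the fact that $|t^{-(z+1+\alpha_k)}|$ is independent of $\mathrm{Im}(z)$, which makes the estimate for $G_{1,k}$ uniform in $b$ and is what produces the $e^{a\log a}$ (rather than $e^{2|z|\log|z|}$) growth along the positive real axis in \eqref{item ii G_k}; and (c) the bookkeeping of the constants $c_1,c_2$, which in the end depend only on $\kappa$, $\|v_k\|_{L^1(\R^n)}$, $\alpha_k$ and the fixed dimension $n$, as claimed.
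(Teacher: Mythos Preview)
Your proposal is correct and follows essentially the same route as the paper: split $G_k$ at $t=1$, use the heat bound \eqref{heat_bound_rand} so that only $a=\mathrm{Re}(z)$ enters, turn the $(0,1)$-piece into $\kappa^{-2(a+\frac n2+\alpha_k)}\Gamma(a+\tfrac n2+\alpha_k)$ via the substitution $t\mapsto 1/t$, and then read off (i)--(iii) from Lemma~\ref{lem_H_bounds}. The paper's write-up is terser (it bundles the tail piece into the same Gamma bound and simply says the three items ``follow immediately'' from Lemma~\ref{lem_H_bounds}), but your more explicit handling of (iii) via $a\le|z|$, monotonicity of $x\log x$, and $\log|z|\le|\log z|$ is exactly what is implicitly being used.
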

			
			\begin{proof}
				We first note that $t^{-z} = e^{-z \log t}$ for $t > 0$ where the logarithm is defined using its principal branch. By definition of $G_k$ together with \eqref{kappa} and \eqref{heat_bound_rand}, we write for each $z=a+\mathrm{i}b$ with $a\geq 0$ that
                \begin{equation}
					\begin{split}
						|G_k(z)|&\leq  \int_0^1  |(e^{t\Delta}v_k)(x)|\,t^{-(a+1+\alpha_k)}\,dt+ \int_1^{\infty}|(e^{t\Delta}v_k)(x)|\,t^{-(a+1+\alpha_k)}\,dt 
						\\
						&\leq  \int_0^1 \frac{\|v_k\|_{L^1(\R^n)} }{(4\pi t)^{\frac{n}{2}}}\, e^{-\frac{\kappa^2}{t}} \,t^{-(a+1+\alpha_k)}\,dt+ \int_1^{\infty}\frac{\|v_k\|_{L^1(\R^n)} }{(4\pi t)^{\frac{n}{2}}}\, e^{-\frac{\kappa^2}{t}}t^{-(a+1+\alpha_k)}\,dt \\
						&\leq \underbrace{\frac{\|v_k\|_{L^1(\R^n)} }{(4\pi)^{\frac{n}{2}}} \int_1^\infty e^{-\kappa^2 t} \,t^{a+\alpha_k+\frac{n}{2}-1}\,dt}_{\text{Change of variables}}+ \frac{\|v_k\|_{L^1(\R^n)} }{(4\pi)^{\frac{n}{2}}} \int_1^\infty t^{-(a+\alpha_k+\frac{n}{2}+1)}\,dt\\
						&=\frac{\|v_k\|_{L^1(\R^n)} }{(4\pi)^{\frac{n}{2}}\kappa^{2a+2\alpha_k+n}} \int_{\kappa^2}^\infty e^{-t} \,t^{a+\alpha_k+\frac{n}{2}-1}\,dt+ \frac{\|v_k\|_{L^1(\R^n)} }{(4\pi)^{\frac{n}{2}}} \int_1^\infty t^{-(a+\alpha_k+\frac{n}{2}+1)}\,dt\\
						&\leq \frac{\|v_k\|_{L^1(\R^n)}}{(4\pi)^{\frac{n}{2}}}\left(\kappa^{-2a-2\alpha_k-n}\Gamma\LC a+\alpha_k+\frac{n}{2}\RC+ \frac{1}{\frac{n}{2}+1+a}\right)\\
						&\leq 2\frac{\|v_k\|_{L^1(\R^n)}}{(4\pi)^{\frac{n}{2}}}\,\kappa^{-2a-2\alpha_k-n}\,\Gamma\LC a+\alpha_k+\frac{n}{2}\RC.
					\end{split}
				\end{equation}
				All the bounds \ref{item i G_k}--\ref{item iii G_k} follow immediately from combining the previous bound with Lemma~\ref{lem_H_bounds} \ref{item 1 lem 3.4}--\ref{item 3 lem 3.4}. This completes the proof.
			\end{proof}

			The following proposition is a key step in the proof of Theorem~\ref{thm_ent_smooth}.

			\begin{proposition}\label{prop_F_zero}
				Let the hypothesis of Theorem~\ref{thm_ent_smooth} be satisfied. Given each $x\in \omega$, the meromorphic function $F$ defined by \eqref{F_def} vanishes everywhere in the complex plane, away from its isolated poles.  
			\end{proposition}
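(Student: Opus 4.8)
The plan is to prove the proposition in two stages: first show that $F$ vanishes at every nonnegative integer, and then combine the growth estimates of Lemmas~\ref{lem_H_bounds} and~\ref{lem_analytic_bounds} with Pila's interpolation theorem (Theorem~\ref{thm_Pila}) to deduce that $F$ vanishes identically on $\{\mathrm{Re}(z)\ge 0\}$; the statement over all of $\C$ then follows from the meromorphic continuation of Lemma~\ref{lem_meromorphic} together with the identity theorem.

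\emph{Step 1: $F(m)=0$ for $m=0,1,2,\ldots$} Fix $x\in\omega$ and recall from \eqref{g_def} that $F(z)=\sum_{k=1}^N\frac{\Gamma(z+1+\alpha_k)}{\Gamma(-\alpha_k)\Gamma(1+\alpha_k)}G_k(z)$ with $G_k(z)=\int_0^\infty(e^{t\Delta}v_k)(x)\,t^{-(z+1+\alpha_k)}\,dt$. I would compute $G_k(m)$ by integrating by parts $m$ times in $t$, using $\partial_t(e^{t\Delta}v_k)=e^{t\Delta}\Delta v_k$: the boundary terms vanish at $t=0^+$ thanks to the bound \eqref{heat_bound_rand} (available since $x\in\omega$ and $y\in\R^n\setminus\mathcal O$ force $|x-y|\ge 2\kappa$ by \eqref{kappa}, and the same bound holds with $v_k$ replaced by any $\Delta^j v_k$), and at $t=\infty$ thanks to the $t^{-n/2}$ heat-kernel decay; these estimates also make every intermediate integral absolutely convergent. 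This gives
\[
G_k(m)=\frac{1}{\prod_{j=1}^m(j+\alpha_k)}\int_0^\infty\bigl(e^{t\Delta}\Delta^m v_k\bigr)(x)\,t^{-(1+\alpha_k)}\,dt=\frac{\Gamma(1+\alpha_k)}{\Gamma(m+1+\alpha_k)}\int_0^\infty\bigl(e^{t\Delta}\Delta^m v_k\bigr)(x)\,t^{-(1+\alpha_k)}\,dt,
\]
with the convention that no integration by parts is performed when $m=0$. Since $v_k\equiv 0$ on the open set $\mathcal O\ni x$, all iterated Laplacians $\Delta^m v_k$ vanish at $x$, so adding and subtracting $\Delta^m v_k(x)$ and applying \eqref{fractional heat semi} (with exponent $\alpha_k\in(0,1)$) yields $\int_0^\infty(e^{t\Delta}\Delta^m v_k)(x)\,t^{-(1+\alpha_k)}\,dt=\Gamma(-\alpha_k)\bigl((-\Delta)^{\alpha_k}\Delta^m v_k\bigr)(x)$. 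Substituting back, the Gamma factors cancel and
\[
F(m)=\sum_{k=1}^N\bigl((-\Delta)^{\alpha_k}\Delta^m v_k\bigr)(x)=\Delta^m\Bigl(\sum_{k=1}^N(-\Delta)^{\alpha_k}v_k\Bigr)(x)=0,
\]
where the middle equality uses that $(-\Delta)^{\alpha_k}$ and $\Delta^m$ commute as Fourier multipliers, and the last equality uses that $\sum_{k}(-\Delta)^{\alpha_k}v_k$ is smooth and vanishes on the open set $\mathcal O$ by \eqref{condition_ent}, hence so do all of its derivatives at $x\in\omega\Subset\mathcal O$. By Lemma~\ref{lem_meromorphic} the expression \eqref{F(z)} coincides with the meromorphic $F$ of \eqref{F_def} on $\{\mathrm{Re}(z)\ge0\}$, so the numbers $F(m)$ above are genuinely values of the function appearing in the statement.

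\emph{Step 2: interpolation and analytic continuation.} On $\{\mathrm{Re}(z)\ge0\}$ I would estimate $F=\sum_k\frac{\Gamma(z+1+\alpha_k)}{\Gamma(-\alpha_k)\Gamma(1+\alpha_k)}G_k$ by combining Lemma~\ref{lem_H_bounds} (applied with $s=\alpha_k$) with Lemma~\ref{lem_analytic_bounds}: on the imaginary axis the factor $\Gamma(\mathrm i b+1+\alpha_k)$ supplies the decay $e^{-\frac{\pi}{2}|b|}$ while $G_k(\mathrm i b)$ remains bounded, so $F$ is bounded (indeed exponentially decaying) on $\mathrm i\R$; on the positive real axis and in the open right half-plane the two lemmas combine to give $|F(z)|\le C e^{c|z|\log(|z|+2)}$ for suitable constants $c,C>0$. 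Together with the vanishing $F(m)=0$, $m\in\N$, from Step~1, these are precisely the hypotheses of Theorem~\ref{thm_Pila}, which therefore forces $F\equiv 0$ on $\{\mathrm{Re}(z)\ge0\}$. Since by Lemma~\ref{lem_meromorphic} the function $F$ of \eqref{F_def} is meromorphic on all of $\C$ and agrees with the above on the right half-plane, the identity theorem for meromorphic functions gives $F\equiv 0$ on $\C$ away from its poles (which, a posteriori, do not occur), completing the proof.

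\emph{Expected main obstacle.} The only step that is not essentially mechanical is Step~1, namely verifying that after the repeated integration by parts the integer-shifted weights $t^{-(m+1+\alpha_k)}$ reduce exactly to $\Delta^m$ applied to the order-$\alpha_k$ term, together with the justification that every boundary term vanishes and every integral converges absolutely. The crucial input is the hypothesis $x\in\omega\Subset\mathcal O$, which simultaneously keeps $x$ at positive distance from $\supp v_k$ (giving \eqref{heat_bound_rand} and convergence at $t=0^+$) and forces $v_k$ and all its iterated Laplacians to vanish to infinite order at $x$ (which is what allows one to re-insert the subtracted value needed to invoke \eqref{fractional heat semi}). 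Step~2 is then a direct appeal to the already-established growth lemmas and to Pila's interpolation theorem.
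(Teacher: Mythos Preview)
Your proposal is correct and follows essentially the same approach as the paper: both arguments establish $F(m)=0$ for $m\in\N$ via repeated integration by parts in $t$ (you run the computation from $G_k(m)$ back to the heat-semigroup representation of $(-\Delta)^{\alpha_k}\Delta^m v_k$, while the paper runs it in the opposite direction, but the calculation and the boundary-term justifications are identical), and both then invoke Lemmas~\ref{lem_H_bounds}--\ref{lem_analytic_bounds} to feed Pila's theorem with $\alpha=1$, $\beta=-\tfrac12$. One small point: in Step~2 you should make explicit that the combined growth on the real axis is $e^{2a\log a+O(a)}$ (so that $\alpha=1$), since an unspecified constant $c$ in $e^{c|z|\log(|z|+2)}$ would not by itself verify $\alpha+\beta<1$.
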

			
			In order to prove the above proposition, we need to make use of a sharp interpolation theorem for holomorphic functions that vanish on positive integers subject to certain growth rates at infinity, due to Pila \cite{pila_05}. For the convenience of the reader, we include Pila's theorem here.
			\begin{theorem}
				\label{thm_Pila}
				Let $\alpha,\beta\in \R$ with $\alpha+\beta<1$  and let $\epsilon>0$. Write $z=a+ib$ and suppose that $h(z)$ is holomorphic in the region $a\ge 0$, satisfying:
				\begin{enumerate}[(i)]
					\item {$\limsup\limits_{|b|\to \infty}\frac{\log|h(ib)|}{\pi |b|}\le \beta$,}
					\item {$\limsup\limits_{a\to \infty}\frac{\log|h(a)|}{2a\log a}\le \alpha$,} 
					\item {$\log|h(z)|=\mathcal{O}(|z|^{2-\epsilon})$, throughout $a\ge 0$, as $|z|\to\infty$}. 
				\end{enumerate}
				Suppose that $h(m)=0$ for all $m\in \N$. Then $h\equiv 0$ on the set $\{ z\in \C: \, \mathrm{Re}(z)\geq 0\}$. 
			\end{theorem}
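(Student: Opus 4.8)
\emph{Overview.} The statement is a Carlson-type uniqueness theorem in the half-plane, and my plan is to combine a reduction to a ``clean'' Carlson configuration with a zero-counting argument. The heart of the matter is that $h$ is forced to vanish at \emph{all} positive integers, and a holomorphic function obeying (i)--(iii) simply cannot afford that many zeros unless it is identically zero. So I would argue by contradiction, assuming $h\not\equiv 0$; then $h$ has only isolated zeros, and after factoring out a power $z^{m}$ if $h(0)=0$ (which changes nothing relevant) I may assume $h(0)\neq 0$.

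\emph{Step 1: reduction to the case $\alpha=0$.} I would multiply $h$ by the zero-free holomorphic function $e^{\lambda(z+1)\log(z+1)}$ on $\{\mathrm{Re}\,z\ge 0\}$, for a suitable real $\lambda=\lambda(\alpha,\beta)$. Since $\mathrm{Re}\big((z+1)\log(z+1)\big)$ behaves like $a\log a$ along the positive real axis but like $-\tfrac{\pi}{2}|b|$ along the imaginary axis, a judicious choice of $\lambda$ (negative when $\alpha\ge 0$, positive when $\alpha<0$ and $\beta$ is large) transfers growth between the two directions; the inequality $\alpha+\beta<1$ is exactly what guarantees that $\lambda$ can be picked so that the new function $h_{1}:=h\cdot e^{\lambda(z+1)\log(z+1)}$ is \emph{bounded on $[0,\infty)$} while still satisfying $\limsup_{|t|\to\infty}\frac{\log|h_{1}(it)|}{\pi|t|}\le\beta''$ for some $\beta''<1$. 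Moreover $h_{1}$ vanishes at every positive integer (the multiplier is zero-free) and retains global order $<2$, since the multiplier only contributes an $O(|z|\log|z|)$ to $\log|h_{1}|$.

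\emph{Step 2: exponential type via Phragmén--Lindelöf, then Carleman's formula.} In each of the quadrants $\{\mathrm{Re}\,z\ge 0,\ \pm\mathrm{Im}\,z\ge 0\}$ the function $z\mapsto h_{1}(z)\,e^{\pm i\pi(\beta''+\eta)z}$ (with $\eta>0$ small and $\beta''+\eta<1$) is bounded on both edges of the quadrant — bounded on $[0,\infty)$, and tending to $0$ on the imaginary axis by the choice of $\eta$ — while hypothesis (iii) makes it of order $<2$ there, which is precisely the threshold licensing Phragmén--Lindelöf in a $\pi/2$-sector; hence $|h_{1}(z)|\le C\,e^{\pi(\beta''+\eta)|\mathrm{Im}\,z|}$ throughout $\{\mathrm{Re}\,z\ge 0\}$. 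I would then apply Carleman's formula for the right half-plane to $h_{1}$ on the half-disc of radius $R$: the left-hand side is a sum over zeros with nonnegative weights $\big(\tfrac1{|z_{k}|}-\tfrac{|z_{k}|}{R^{2}}\big)\cos(\arg z_{k})$, hence at least the contribution $\sum_{m=1}^{\lfloor R\rfloor}\big(\tfrac1m-\tfrac{m}{R^{2}}\big)=\log R+O(1)$ from the zeros at the positive integers; on the right-hand side the semicircular integral is $O(1)$ (insert the bound above into the $\cos\theta$-weighted integral: $\tfrac{c}{R}\!\int_{-\pi/2}^{\pi/2}\big(O(1)+\pi(\beta''+\eta)R|\sin\theta|\big)\cos\theta\,d\theta=O(1)$), while the integral over the imaginary axis, using $\log|h_{1}(it)h_{1}(-it)|\le 2\pi(\beta''+\eta)|t|+O(1)$ against the weight $\tfrac1{2\pi}\big(\tfrac1{t^{2}}-\tfrac1{R^{2}}\big)$, is at most $(\beta''+\eta)\log R+O(1)$. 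Comparing, $\log R+O(1)\le(\beta''+\eta)\log R+O(1)$, which for $\beta''+\eta<1$ fails once $R$ is large; hence $h_{1}\equiv 0$, and therefore $h\equiv 0$ on $\{\mathrm{Re}\,z\ge 0\}$.

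\emph{Main obstacle.} I expect the real difficulty to be the bookkeeping of constants rather than any single hard estimate: one must verify that the reductions genuinely convert $\alpha+\beta<1$ into exponential type $<\pi$ with no factor-of-two loss, and that the \emph{precise} normalisation of Carleman's formula makes the imaginary-axis term weigh in with coefficient exactly $\beta''$ (so the critical threshold is $1$, matching $\alpha+\beta<1$, not $\tfrac12$). Condition (iii) enters only to make Phragmén--Lindelöf applicable in the quadrants; it plays no role in the zero count. A few harmless technicalities also need a word: $h_{1}$ may vanish on the imaginary axis, but there $\log|h_{1}(it)|=-\infty$ only helps the upper bound, and such zeros carry weight $\cos(\pm\pi/2)=0$ in Carleman's sum; zeros of modulus $<1$ are absorbed in the $R$-independent remainder of Carleman's formula.
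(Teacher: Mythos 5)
The paper never proves this statement: Theorem~\ref{thm_Pila} is imported verbatim from Pila's note \cite{pila_05} and is quoted only for the reader's convenience, so there is no internal proof to compare against — your proposal supplies an argument where the paper supplies a citation. As a standalone proof sketch it looks sound; it is the classical Carleman-formula route to Carlson-type theorems, and the bookkeeping does come out right. The multiplier $e^{\lambda(z+1)\log(z+1)}$ with $\lambda=-2\alpha'$, $\alpha'>\alpha$ chosen with $\alpha'+\beta<1$, has $\mathrm{Re}\big((z+1)\log(z+1)\big)=a\log a+O(\log a)$ on $[0,\infty)$ and $=-\tfrac{\pi}{2}|b|+O(\log|b|)$ on the imaginary axis, so it converts hypothesis (ii) (normalised by $2a\log a$) into boundedness of $h_1$ on the positive real axis at the cost of raising the imaginary-axis type from $\pi\beta$ to $\pi(\alpha'+\beta)<\pi$ — exactly the transfer with no factor-of-two loss you flagged as the main risk. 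Hypothesis (iii) gives order $2-\epsilon<2$ in each quadrant (the multipliers add only $O(|z|\log|z|)$ and $O(|z|)$ to $\log|h_1|$), which is precisely the threshold for Phragmén--Lindelöf in a $\pi/2$-sector, and the Carleman computation is correct: the integer zeros contribute $\log R+O(1)$ with all other zeros entering with nonnegative weight, the semicircular term is $O(1)$ because $\int_{-\pi/2}^{\pi/2}|\sin\theta|\cos\theta\,d\theta=1$, and the imaginary-axis term is at most $(\beta''+\eta)\log R+O(1)$, so $\beta''+\eta<1$ yields the contradiction. What remains to write out is routine: the limsup hypotheses give the stated bounds only for $a$, $|b|$ large, so use continuity on compacts to absorb the rest into constants; choose $R$ avoiding zeros of $h_1$ on $|z|=R$ (or invoke continuity of both sides of Carleman's formula in $R$); and observe that the argument is insensitive to the sign of $\beta''$ (if $\beta''+\eta<0$ the estimates only improve). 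None of this affects the structure, so I see no genuine gap.
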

			
			\begin{proof}[Proof of Proposition~\ref{prop_F_zero}]
				
				The proof is analogous to the proof of \cite[Lemma 3.7]{FKU24} but with some modifications as we are working on $\R^n$. Throughout this proof, it is important to recall that the elements of $\{v_k\}_{k=1}^{N}$ are smooth and in fact belong to the Schwartz class $\mathcal S(\R^n)$.  We begin by noting that the condition \eqref{condition_ent} implies in particular that 
				\begin{equation}
					\begin{split}
						\Delta^{m}v_1 =\ldots = \Delta^{m}v_k=0 \text{ in }\mathcal{O},\quad m\in \N,
					\end{split}
				\end{equation}
				and subsequently that
				\begin{equation}
					\sum_{k=1}^N  (-\Delta)^{\alpha_k}\Delta^{m} v_k=0 \text{ in }\mathcal{O},
				\end{equation}
				for any $m\in \N$, where we use $(-\Delta)^{\alpha+\beta}=(-\Delta)^\alpha (-\Delta)^{\beta}$ on $H^{2\alpha+2\beta}(\R^n)$, for all $\alpha, \beta \in \R$. Recalling that $\omega \Subset \mathcal{O}$ is a nonempty bounded open set and that $\kappa\in (0,1)$ is as in \eqref{kappa}, the above identity implies that 
				\begin{align}\label{pf of UCP 1}
					\sum_{k=1}^N \frac{1}{\Gamma(-\alpha_k)} \int_0^\infty  \frac{e^{t\Delta}\Delta^{m} v_k(x)}{t^{1+\alpha_k}}\, dt=0 ,\text{ for }x\in \omega,
				\end{align}
				for all $m\in \N$. We claim that the previous identity \eqref{pf of UCP 1} can be reduced to 
				\begin{equation}\label{eq_important_m}
					\sum_{k=1}^N \frac{\Gamma(1+m+\alpha_k)}{\Gamma(-\alpha_k)\Gamma(1+\alpha_k)} \int_0^{\infty} (e^{t\Delta}v_k)(x)\,t^{-(1+m+\alpha_k)}\,dt =0 \quad \forall\, x\in \omega\quad \forall m\in \N. 
				\end{equation}
				In order to prove \eqref{eq_important_m} from \eqref{pf of UCP 1} we will employ an integration by parts trick that has been used recently in several related works, see for example, the proofs of \cite[Lemma 3.7]{FKU24}, \cite[Proposition 3.1]{GU2021calder} and \cite[Theorem 1.1]{FGKU_2025fractional}). Using the fact that $t\mapsto e^{t\Delta} (\Delta v_k)\in C^\infty([0,\infty); C^\infty(\R^n))$, and  that $e^{t\Delta}\Delta^{m}= \Delta^{m} e^{t\Delta}$ for all $t\ge 0$ on $\mathcal{D}(\Delta^m)=H^{2m}(\R^n)$, we obtain for any $m\in \N$,
				\begin{equation}
					\label{eq_100_4}
					\big(e^{t\Delta}\Delta^{m} v_k\big)(x)=\p_t^{m} \big(e^{t\Delta}v_k\big)(x), 
				\end{equation}
				for  $x\in \omega$ and $k=1,\dots, N$. Combining \eqref{pf of UCP 1} and  \eqref{eq_100_4}, we get 
				\begin{equation}
					\label{eq_100_5}
					\sum_{k=1}^N \frac{1}{\Gamma(-\alpha_k)}\int_0^\infty  \p_t^m(e^{t\Delta}v_k)(x) \frac{dt}{t^{1+\alpha_k}}=0,
				\end{equation}
				for $x\in \omega$ and all $m\in \N$. We shall next repeatedly integrate by parts in \eqref{eq_100_5} $m$ times and show that no contributions arise from the endpoints of the integral. Indeed, for $t > 0$ and $x \in \omega$, using \eqref{eq_100_4} and \eqref{exp_decay}, we obtain (analogously as in \eqref{heat_bound_rand}) that
				\begin{equation}
					\label{eq_100_5_1}
					\left|\p_t^l (e^{t\Delta}v_k)(x)\right|=\left|(e^{t\Delta}\Delta^{l}v_k)(x)\right|\le  \frac{\left\|\Delta^l v_k\right\|_{L^1(\R^n)} }{(4\pi t)^{\frac{n}{2}}}\, e^{-\frac{\kappa^2}{t}}\, \qquad t>0\quad x\in \omega,
				\end{equation}
				where $l = 0, 1, \dots, m-1$. The bound \eqref{eq_100_5_1} shows that no contribution from $t =0$ or $t=\infty$ arises when integrating by parts in \eqref{eq_100_5}. Thus, by integrating by parts $m$ times in \eqref{eq_100_5}, we obtain that 
				\begin{equation}
					\label{eq_100_8}
					\sum_{k=1}^N \frac{1}{\Gamma(-\alpha_k)}c_k\int_0^\infty  (e^{t\Delta} v_k)(x) \frac{dt}{t^{m+1+\alpha_k}}=0,
				\end{equation}
				for $x\in \omega$ and all $m\in \N$.  Here 
				\begin{equation}
					\label{eq_100_9}
					c_k=(1+\alpha_k)(2+\alpha_k)\dots (m+\alpha_k)=\frac{\Gamma(m+1+\alpha_k)}{\Gamma(1+\alpha_k)}.
				\end{equation}
				This completes the proof of \eqref{eq_important_m}. Recalling the definition \eqref{F(z)}, we observe next that \eqref{eq_important_m} may be rewritten in the form
				\begin{equation}
					\label{F(m)}
					F(m)=0, \quad \text{for all } m\in \N.
				\end{equation}
				Recall from Lemma~\ref{lem_analytic} that $F(z)$ is holomorphic in $\left\{ z: \, \mathrm{Re}(z)\geq 0\right\}$ and also that it admits a meromorphic extension to all of $\C$ via Lemma~\ref{lem_meromorphic}. Furthermore, recalling that
				$$ F(z)= \sum_{k=1}^N \frac{\Gamma(z+1+\alpha_k)}{\Gamma(-\alpha_k)\Gamma(1+\alpha_k)}\,G_k(z),$$
				it follows from Lemma~\ref{lem_H_bounds} and Lemma~\ref{lem_analytic_bounds} that the function $F(z)$  satisfies the bounds on the set $\left\{ z\in \C: \, \mathrm{Re}(z)\geq 0\right\}$ stated in Theorem~\ref{thm_Pila} (with $h$ replaced by $F$ and with the choices $\alpha=1$ and $\beta=-\frac{1}{2}$). Thus, we conclude from Pila's theorem that $F(z)$ on the right half-plane and subsequently by analytic continuation that its meromorphic extension given by Lemma~\ref{lem_meromorphic} vanishes identically on $\C$ away from its isolated poles. These poles happen precisely at those values of $z$ for which either $z+1+\alpha_k$ or $z+\frac{n}{2}+\alpha_k$ is a nonpositive integer for some $k=1,\ldots,N$.
			\end{proof}     
			
			\subsection{Analysis of singularities of the meromorphic extension}
			We aim to use Proposition~\ref{prop_F_zero} and perform an analysis of the poles of the function $F(z)$. Recall from Lemma~\ref{lem_meromorphic} that given a fixed $x\in \omega$, the meromorphic function $F:\C \to \C$ is given by the expression 
			$$ F(z)= \sum_{k=1}^N \frac{4^{\alpha_k+z}}{\pi^{\frac{n}{2}}} \, \frac{\Gamma\LC z+1+\alpha_k\RC  \Gamma \LC z +\frac{n}{2} +\alpha_k \RC}{\Gamma(-\alpha_k)\,\Gamma(1+\alpha_k)} \int_{\R^n\setminus \mathcal{O}} \frac{v_k(y)}{\abs{x-y}^{n+2\alpha_k +2z}}\, dy.$$
			Based on the proof of Lemma~\ref{lem_meromorphic} we also know that a point $z\in \C$ is a pole of the function $F$ above if and only if it is of the form $z=-\alpha_k-m-\frac{n}{2}$ or $z=-\alpha_k-m-1$ for some $k=1,\ldots,N$ and some $m=0,1,2,\ldots$. Our analysis will need to be modified depending on the parity of the dimension $n \geq 2$. \\
			
			\noindent{\bf \em II.I. case of even dimension.} We will assume first that $n=2p$ for some $p\in \N$ so that
			$$ F(z)= \sum_{k=1}^N \frac{4^{\alpha_k+z}}{\pi^{p}} \, \frac{\Gamma\LC z+1+\alpha_k\RC  \Gamma \LC z +p +\alpha_k\RC}{\Gamma(-\alpha_k)\,\Gamma(1+\alpha_k)} \int_{\R^n\setminus \mathcal{O}} \frac{v_k(y)}{\abs{x-y}^{2p+2\alpha_k +2z}}\, dy.$$
			Let us now fix a number $m \in \N \cup \{0\}$ and an index $j\in \{1,\ldots,N\}$ and subsequently consider the limit
			\begin{equation}\label{residue_limit_even} \lim\limits_{z\to -m-p-\alpha_j} (z+m+p+\alpha_j)^2\,F(z).\end{equation}
			On the one hand, the above limit is zero, thanks to Proposition~\ref{prop_F_zero}. On the other hand, we can evaluate the limit explicitly by residue calculus. Let us compute the limit above by breaking the sum into its $N$ components and see their contributions.

            It is straightforward to see that for each $k=1,\ldots,N$ with $k\neq j$, there holds:
			\begin{equation}\label{residue_limit_even_1} \lim\limits_{z\to -m-p-\alpha_j} (z+m+p+\alpha_j)^2\,\Gamma(z+1+\alpha_k)\Gamma(z+\alpha_k+p) =0,\end{equation}
			because the Gamma functions on the right-hand side do not have a pole in the limit, thanks to $\alpha_k \neq \alpha_j$ and $\alpha_k \in (0,1)$. Thus, the only term in the summand that could contribute to the limit would be the term $k=j$. For this term, both the Gamma functions have a simple singularity there and as such we have that
			\begin{equation}
            \begin{split}
                &\quad \, \lim\limits_{z\to -m-p-\alpha_j} (z+m+p+\alpha_j)^2\,\Gamma(z+1+\alpha_j)\Gamma(z+p+\alpha_j)\\
                &=\Big( \lim\limits_{z\to -m-p-\alpha_j} (z+m+p+\alpha_j)\Gamma(z+1+\alpha_j)\Big) \Big( \lim\limits_{z\to -m-p-\alpha_j} (z+m+p+\alpha_j)\Gamma(z+p+\alpha_j)\Big)\\
                &=\textrm{Res}(\Gamma;-m-p+1)\,\textrm{Res}(\Gamma;-m),
            \end{split}
                \end{equation}
			where we recall that for each  $\ell\in \N\cup \{0\}$, the notation $\textrm{Res}(\Gamma,-\ell)$, $\ell\in \N\cup \{0\}$ stands for the residue\footnote{Recall that if $c$ is a simple pole of the function $f$, the residue of $f$ is given by $\mathrm{Res}(f;c)=\displaystyle\lim_{z\to c}(z-c)f(z)$.} of Gamma function $\Gamma(z)$ at the point $z=-\ell$ that equals $\frac{(-1)^\ell}{\ell!}$. Therefore, 
			\begin{equation}\label{residue_limit_even_2} \lim\limits_{z\to -m-p-\alpha_j} (z+m+p+\alpha_j)^2\,\Gamma(z+1+\alpha_j)\Gamma(z+p+\alpha_j) =\frac{(-1)^{p-1}}{m!(m+p-1)!}.\end{equation}
			By combining \eqref{residue_limit_even_1}--\eqref{residue_limit_even_2} into the limit \eqref{residue_limit_even} and recalling that the answer must be zero, we obtain
			\begin{equation}
				\int_{\R^n\setminus \mathcal O} v_j(y)\, |x-y|^{2m}\,dy =0 \quad \text{for all $m\in \N\cup\{0\}$ and all $j=1,\ldots,N.$} 
			\end{equation}
			In particular, since $v_j$'s all vanish on the set $\mathcal O$, we obtain that
			\begin{equation}
				\label{residue_limit_even_final}
				\int_{\R^n} v_j(y)\, |x-y|^{2m}\,dy =0 \quad \text{for any $x\in \omega$, $m\in \N\cup\{0\}$ and $j=1,\ldots,N.$} 
			\end{equation}
			We will hold onto this and move on to the singularity analysis in the odd dimension.\\
			
			\noindent{\bf \em II.II. case of odd dimension.} We will assume now that $n=2p-1$ for some $p\in \N$ so that
			$$ F(z)= \sum_{k=1}^N \frac{4^{\alpha_k+z}}{\pi^{p-\frac{1}{2}}} \, \frac{\Gamma\LC z+1+\alpha_k\RC  \Gamma \LC z+\alpha_k +p-\frac{1}{2}\RC}{\Gamma(-\alpha_k)\,\Gamma(1+\alpha_k)} \int_{\R^n\setminus \mathcal{O}} \frac{v_k(y)}{\abs{x-y}^{2p-1+2\alpha_k +2z}}\, dy.$$
			Let us now fix a number $m \in \N \cup \{0\}$ and an index $j\in \{1,\ldots,N\}$ and subsequently consider the limit
			\begin{equation}\label{residue_limit_odd} \lim\limits_{z\to -m-1-\alpha_j} (z+m+1+\alpha_j)\,F(z).\end{equation}
			As in the previous case, on the one hand, the above limit is zero, thanks to Proposition~\ref{prop_F_zero}. On the other hand, we can evaluate the limit explicitly by residue calculus.

            It is straightforward to see that for each $k=1,\ldots,N$ with $k\neq j$, there holds:
			\begin{equation}\label{residue_limit_odd_1} \lim\limits_{z\to -m-1-\alpha_j} (z+m+1+\alpha_j)\,\Gamma \LC z+1+\alpha_k\RC \Gamma\LC z+\alpha_k+p-\frac{1}{2}\RC =0,\end{equation}
			because the Gamma functions on the right-hand side do not have a pole at the limit, thanks to the fact that $\alpha_k \neq \alpha_j$ as well as \eqref{exp_condition_alpha} and $\alpha_k \in (0,1)$. Thus, the only term in the summation that could contribute to the limit would be the term $k=j$. For this term, we note that
			\begin{equation}\label{residue_limit_odd_2}
				\begin{split}
					&\quad\, \lim\limits_{z\to -m-1-\alpha_j} (z+m+1+\alpha_j)\,\Gamma(z+1+\alpha_j)\Gamma\LC z+\alpha_j+p-\frac{1}{2}\RC \\
					&=\textrm{Res}(\Gamma;-m)\, \Gamma\LC -m+p-\frac{3}{2}\RC\\
					&=\frac{(-1)^m}{m!} \underbrace{\Gamma\LC -m+p-\frac{3}{2}\RC}_{\text{Nonzero}}
				\end{split}
			\end{equation}
			By combining \eqref{residue_limit_odd_1}--\eqref{residue_limit_odd_2} into the limit \eqref{residue_limit_odd} and recalling that the answer must be zero, we obtain
			\begin{equation}
				\int_{\R^n\setminus \mathcal O} v_k(y)\, |x-y|^{2m-2p+3}\,dy =0, \quad \text{for all $m\in \N\cup\{0\}$ and all $k=1,\ldots,N.$} 
			\end{equation}
			In particular, since $p\geq 1$ and since $v_k$'s vanish on the set $\mathcal O$, we obtain that
			\begin{equation}
				\label{residue_limit_odd_final}
				\int_{\R^n} v_k(y)\, |x-y|^{2m+1}\,dy =0, \quad \text{for any $x\in \omega$, $m\in \N\cup\{0\}$ and $k=1,\ldots,N.$} 
			\end{equation}
			We conclude this section by noting that our singularity analysis of $F(z)$ together with Proposition~\ref{prop_F_zero} has given us the two equations \eqref{residue_limit_even_final} and \eqref{residue_limit_odd_final} depending on the parity of the dimension $n$. The proof of Theorem~\ref{thm_ent_smooth} now follows immediately from the next proposition.

			\subsection{Support theorems for spherical mean transform}
			
			We aim to complete the proof of Theorem~\ref{thm_ent_smooth} via the following proposition.
			\begin{proposition}\label{prop_reduction from SM transform}
				Let $\omega \subset \R^n$ be a nonempty bounded open set and let $v\in C^{\infty}(\mathbb R^n)$ be identical to zero on $\omega$. Assume that there exist constants $C,\rho>0$ and $\gamma>1$ such that 
				\begin{equation}
					\label{v_super_decay}
					|v(x)| \leq C\,e^{-\rho |x|^\gamma} \quad \text{for any } x\in \R^n.
				\end{equation}
				Moreover, assume that given each $x\in \omega$ and each $m\in \N\cup \{0\}$, there holds
				\begin{equation}\label{moments} 
					\begin{cases}
						\int_{\R^n} v(y)\, |x-y|^{2m}\,dy =0,  &\text{if $n$ is even,}\\
						\int_{\R^n} v(y)\, |x-y|^{2m+1}\,dy =0,  &\text{if $n$ is odd}.
					\end{cases}
				\end{equation}
				Then, $v$ must vanish identically on $\R^n$.
			\end{proposition}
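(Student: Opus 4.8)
The plan is to translate the moment conditions \eqref{moments} into the statement that \emph{every} spherical average of $v$ over a sphere centred at a point of $\omega$ vanishes, and then to invoke a classical injectivity (support) theorem for the spherical mean transform. Set $R_0:=\sup_{z\in\omega}|z|<\infty$ (finite since $\omega$ is bounded).

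\textbf{Step 1: reduction to a one–dimensional moment problem.} Fix $x\in\omega$. By \eqref{v_super_decay} the complex measure $v(y)\,dy$ is finite, so I would let $\mu_x$ denote its push–forward under the map $y\mapsto|x-y|^2$, a finite complex measure on $[0,\infty)$. The change of variables formula turns \eqref{moments} into
\[
\int_0^\infty t^m\,d\mu_x(t)=0\quad(n\text{ even}),\qquad \int_0^\infty t^m\,d\widetilde\mu_x(t)=0\quad(n\text{ odd}),\qquad m=0,1,2,\dots,
\]
where $d\widetilde\mu_x:=t^{1/2}\,d\mu_x$; in either case this produces a finite complex measure $\sigma_x$ on $[0,\infty)$ whose moments all vanish. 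A coarea computation gives $d|\mu_x|(t)\le c_n\,t^{\frac n2-1}\big(\sup_{|x-y|^2=t}|v(y)|\big)\,dt$, and combining this with \eqref{v_super_decay} and $|y|\ge|x-y|-R_0$ yields a bound of the form $\int_0^\infty t^m\,d|\sigma_x|(t)\le C_1 C_2^{\,m}\,\Gamma\!\big(\tfrac{2m}{\gamma}+c_3\big)$ with constants depending only on $n,\gamma,\rho,\|v\|_{L^\infty}$ and $R_0$. Because $\gamma>1$, Stirling's formula shows this moment sequence satisfies Carleman's condition $\sum_m\big(\int_0^\infty t^m\,d|\sigma_x|\big)^{-1/(2m)}=\infty$, so the associated Stieltjes moment problem is determinate; applying this determinacy to the Jordan components of $\mathrm{Re}\,\sigma_x$ and $\mathrm{Im}\,\sigma_x$ (each dominated by $|\sigma_x|$) forces $\sigma_x=0$, and hence $\mu_x=0$ (in the odd case $t^{1/2}>0$ on $(0,\infty)$ and $\{0\}$ is Lebesgue–null).

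\textbf{Step 2: vanishing spherical means, and conclusion.} From $\mu_x=0$ one gets $\int_{\{a\le|x-y|\le b\}}v(y)\,dy=0$ for all $0\le a<b$; rewriting in polar coordinates about $x$ and differentiating in $b$ gives $r^{n-1}m_v(x,r)=0$ for a.e.\ $r>0$, hence for every $r>0$ by continuity of $v$, where $m_v(x,r):=\int_{S^{n-1}}v(x+r\theta)\,d\sigma(\theta)$. Since $x\in\omega$ was arbitrary, all spherical means of $v$ over spheres centred in the nonempty open set $\omega$ vanish. Because $v$ is continuous and rapidly decreasing (by \eqref{v_super_decay}), I would then invoke the classical fact that a nonempty open set is a set of injectivity for the spherical mean Radon transform on rapidly decreasing functions (support theorems of John/Helgason type and their refinements on injectivity sets), which yields $v\equiv0$. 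In even dimensions one can instead finish directly: there $x\mapsto\int v(y)|x-y|^{2m}\,dy$ is a polynomial vanishing on the open set $\omega$, hence on all of $\R^n$, so $m_v(x,r)=0$ for \emph{every} $x\in\R^n$; writing $m_v(\cdot,r)=v*\nu_r$ for $\nu_r$ the uniform probability measure on the sphere of radius $r$ and taking Fourier transforms, $\widehat v(\xi)\,\widehat{\nu_r}(\xi)=0$ for all $\xi$ and $r>0$, and since $r\mapsto\widehat{\nu_r}(\xi)$ is a nontrivial real–analytic function for each fixed $\xi\neq0$, we conclude $\widehat v\equiv0$.

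\textbf{Expected main obstacle.} The decisive point is the moment determinacy in Step~1: Schwartz decay alone does not suffice (there exist Schwartz functions on $[0,\infty)$, with the typical $e^{-t^{1/2}}\sin t^{1/2}$ behaviour, all of whose moments vanish), so the super–exponential hypothesis with $\gamma>1$ is used in an essential way here — it is precisely what makes the relevant Stieltjes moment problem determinate via Carleman's criterion. The final injectivity statement of Step~2, by contrast, needs only rapid decay, which is consistent with the remark that every \emph{other} step of the overall argument goes through under Schwartz decay.
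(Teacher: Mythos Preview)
Your argument is correct and reaches the same endpoint as the paper (vanishing spherical means over all spheres centred in $\omega$, then an injectivity theorem for the spherical mean transform), but the key reduction is carried out differently. The paper defines the one-dimensional profile $h(t)=t^{n-1}\int_{\mathbb S^{n-1}}v(x-t\theta)\,dV(\theta)$, extends it by zero to $t<0$, and uses the super-exponential decay to make the Fourier--Laplace transform $\mathcal F_f(z)=\int_{\R} f(t)e^{-itz}\,dt$ \emph{entire}; the vanishing of the even (resp.\ odd) moments then forces $\mathcal F_f$ to be odd (resp.\ even), hence $f$ is odd (resp.\ even), and since $f\equiv 0$ on $(-\infty,0)$ it must vanish identically. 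You instead push forward to a measure on $[0,\infty)$ and appeal to Carleman's determinacy criterion for the Stieltjes problem, which is exactly where the hypothesis $\gamma>1$ enters: your moment bound $M_m\lesssim C_2^m\,\Gamma(2m/\gamma+c_3)$ gives $(M_m)^{-1/(2m)}\gtrsim c\,m^{-1/\gamma}$, so $\sum_m(M_m)^{-1/(2m)}=\infty$ precisely when $\gamma\ge 1$. Both approaches are clean; the paper's parity trick is perhaps more self-contained, while your Carleman route makes the threshold in $\gamma$ transparent and explains very concretely why Schwartz decay fails (your parenthetical example should strictly be of the $e^{-t^{1/4}}\sin t^{1/4}$ type rather than $e^{-t^{1/2}}\sin t^{1/2}$, but the point stands). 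Your additional observation in even dimensions---that $x\mapsto\int v(y)|x-y|^{2m}\,dy$ is a polynomial, hence vanishes for \emph{all} $x$, allowing one to finish by elementary Fourier analysis of $v*\nu_r$ and thereby bypass the spherical-mean support theorem---is a nice bonus not present in the paper.
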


			The proof of the above proposition relies on the spherical mean transform, which can be found in  \cite[Theorem 3.3]{Quinto} (see also \cite[Theorem 1.1, Remark 1.3]{RAMM20021033}). Let us collect the result about the spherical mean transform in the next lemma for readers' convenience. 
			
			\begin{lemma}[Spherical mean transform]\label{Lem: SMT}
				Adopting all assumptions in Proposition \ref{prop_reduction from SM transform}, let us consider the spherical mean transform of $v\in C^\infty(\R^n)$ by
				\begin{equation}
					\mathrm{SM}v(x,t):=\int_{\mathbb{S}^{n-1}}v(x-t\theta)\, dV_{\mathbb{S}^{n-1}}(\theta), \quad t>0,
				\end{equation}
				where $\mathbb S^{n-1}$ is the unit sphere in $\R^n$ and $dV_{\mathbb S^{n-1}}$ stands for its surface measure. Suppose that $\mathrm{SM}v(x,t)=0$ for all $(x,t)\in \omega\times (0,\infty)$. Then $v\equiv 0$ on $\R^n$.
			\end{lemma}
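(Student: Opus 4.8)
The plan is to reduce Lemma~\ref{Lem: SMT} to the classical injectivity theory for the spherical mean transform over spheres whose centers fill a nonempty open set; the assertion is in fact precisely \cite[Theorem 3.3]{Quinto} (see also \cite[Theorem 1.1 and Remark 1.3]{RAMM20021033}), and the super-exponential decay \eqref{v_super_decay} -- which is far stronger than what this step requires, since it already forces $v\in\mathscr S(\R^n)$ -- places $v$ well within the admissible class for those results. Let me indicate how such a proof goes. First I would pass from spherical means to a solution of the wave equation: associate with $v$ the function $w\in C^\infty(\R^n\times\R)$ solving
\begin{equation}
\partial_t^2 w=\Delta_x w \quad\text{on }\R^n\times\R,\qquad w(\cdot,0)=0,\qquad \partial_t w(\cdot,0)=v,
\end{equation}
which is built from the spherical means $\mathrm{SM}v(\cdot,t)$ through Kirchhoff's formula when $n$ is odd and through Hadamard's method of descent when $n$ is even (finite speed of propagation makes $w$ well defined despite the lack of decay introduced by descent). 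Equivalently one may work directly with the even-in-$r$ extension of $|\mathbb S^{n-1}|^{-1}\mathrm{SM}v(x,|r|)$, which solves the Euler--Poisson--Darboux equation $\partial_r^2 U+\frac{n-1}{r}\partial_r U=\Delta_x U$ with $U(\cdot,0)=v$. In either case, the hypothesis $\mathrm{SM}v(x,t)=0$ for all $(x,t)\in\omega\times(0,\infty)$ translates exactly into $w\equiv0$ (respectively $U\equiv0$) on the cylinder $\omega\times\R$.

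Next I would invoke unique continuation for this constant-coefficient hyperbolic equation. The lateral hypersurface $\partial\omega\times\R\subset\R^{n+1}$ is non-characteristic (timelike) for $\partial_t^2-\Delta_x$, and the Cauchy data of $w$ vanish identically along it; since the coefficients are analytic, Holmgren's uniqueness theorem propagates the vanishing of $w$ across $\partial\omega\times\R$. A convexification/iteration argument -- exploiting crucially that the vanishing holds over $\omega$ for \emph{all} times $t\in\R$, so that the associated domains of determinacy exhaust $\R^{n+1}$ -- then yields $w\equiv0$ on $\R^n\times\R$, and hence $v=\partial_t w(\cdot,0)\equiv0$. An alternative ending, which is the one in \cite{Quinto}, is microlocal: $\mathrm{SM}$ is an elliptic Fourier integral operator whose canonical relation preserves analytic singularities, so the vanishing of $\mathrm{SM}v$ on the open set $\omega\times(0,\infty)$ empties the analytic wavefront set of $v$ over a large open subset of $\R^n$; thus $v$ is real-analytic there, and a real-analytic function that decays as in \eqref{v_super_decay} and vanishes on the open set $\omega$ must vanish everywhere.

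The main obstacle is exactly this globalization: Holmgren yields vanishing only in a neighborhood of the non-characteristic hypersurface, and converting this into the global conclusion $v\equiv0$ requires the careful geometric continuation (sweeping the zero set out over all of space--time) that forms the analytic core of \cite[Theorem 3.3]{Quinto} and \cite[Theorem 1.1]{RAMM20021033}; in the paper I would simply quote those results rather than reproduce the argument. The decay hypothesis \eqref{v_super_decay} enters only to stay inside the admissible class -- in particular to exclude the classical non-injectivity examples, such as products of solid harmonics with slowly decaying radial profiles -- and, consistently with the remarks following Theorem~\ref{thm: ent}, for this lemma alone Schwartz decay, and in fact substantially less, would already suffice.
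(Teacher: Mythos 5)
Your proposal is correct and coincides with the paper's treatment: the paper offers no independent proof of this lemma, but states it as a quotation of the known support/injectivity theorems for the spherical mean transform (Quinto, Theorem 3.3; Ramm, Theorem 1.1 and Remark 1.3), exactly as you do. Your wave-equation/Holmgren and microlocal sketch is a fair gloss of what underlies those references, and you are right that the super-exponential decay is inherited from Proposition \ref{prop_reduction from SM transform} rather than being what this particular step needs.
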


			\begin{proof}[Proof of Proposition \ref{prop_reduction from SM transform}]
				We will only prove the proposition for the case that $n$ is even. The case of odd dimensions $n\geq 2$ can be obtained by using similar arguments. First, let us fix $x\in \omega$ and observe that by our hypothesis (and since $n$ is even) there holds
				\begin{equation}\label{iden_even}
					\int_{\R^n} v(x-y)\, |y|^{2m}\,dy =0 \quad \forall\, m\in \N \cup \{0\}. 
				\end{equation}
				Fixing $x\in \omega$, and recalling that $v$ vanishes on $\omega$. Let us define the function $h\in C^{\infty}([0,\infty))$ via spherical mean transforms 
				\begin{equation}\label{sphere_mean} 
					h(t)= t^{n-1}\int_{\mathbb S^{n-1}} v(x-t\theta)\,dV_{\mathbb S^{n-1}}(\theta) \qquad t\geq 0.
				\end{equation}
			 Let us also define $f\in L^{\infty}(\R)$ via 
				$$ f(t) = \begin{cases}
					h(t) \quad &\text{if $t \geq0$},\\
					0   \quad &\text{if $t<0$}.
				\end{cases}
				$$
				As the function $v$ satisfies the super-exponential decay \eqref{v_super_decay}, it is straightforward to see that there is a $C'>0$ and $\rho' \in (0,\rho)$ such that 
				\begin{equation}
					\label{f_decay}
					|f(t)| \leq C' e^{-\rho' |t|^{\gamma}}.
				\end{equation}
				In terms of the function $f$, identity \eqref{iden_even} now reduces to 
				\begin{equation}
					\label{f_iden}
					\int_\R f(t) \,t^{2m}\,dt =0, \quad \text{for all } m\in \N\cup \{0\}.
				\end{equation}
				Let us next define $\mathcal F_f \in C^{\infty}(\C)$ to be the Fourier--Laplace transform of $f$ defined by 
				\begin{equation}
					\label{Fourier_Laplace}
					\mathcal F_f(z) := \int_{\R} f(t) e^{-\textrm{i}tz}\,dt, \quad z\in \C.
				\end{equation}
				Observe that the super-exponential decay \eqref{f_decay} of the function $f$ makes the above definition well-defined and makes $\mathcal F_f$ to be an entire function. The condition \eqref{f_iden} and analytic continuation imply that all the even order derivatives of the function $\mathcal F_f(z)$ must vanish at the origin, that is to say
				$$
				\LC \frac{\partial^{2m}}{\partial z^{2m}} \mathcal F_f
				\RC (0)=0, \quad \text{for all } m\in \N \cup \{0\}.
				$$
				As $\mathcal F_f$ is entire, its power series expansion at zero is absolutely convergent everywhere and therefore the latter identity implies that $\mathcal F_f$ is an odd function. Using the inverse Fourier transform 
				$$ f(t) = \frac{1}{2\pi}\,\int_\R \mathcal F_f(\xi)\, e^{\textrm{i}\xi t}\,d\xi,$$
				we deduce that $f$ must also be an odd function. As it vanishes for all $t<0$, it must vanish everywhere. Recalling the definition of $f$ we obtain next that the spherical means of the function $v$ must vanish on any sphere whose center lies on the set $\omega$. By Lemma \ref{Lem: SMT}, we can conclude that the function $v$ must vanish everywhere in $\R^n$.
			\end{proof}

			\section{Proofs for inverse problems \ref{Q:IP1}--\ref{Q:IP2}}\label{sec: IP}
			
			In this section, we prove both Theorems \ref{Thm: global uniqueness 1} and \ref{Thm: global uniqueness 2}. The proof of both Theorems crucially relies on special cases of our entanglement principle. 
        		
			\begin{proof}[Proof of Theorem \ref{Thm: global uniqueness 1}]
			Recalling Proposition~\ref{prop_solvability}, we choose any nonzero $F \in C^{\infty}_0(\Omega_e)$ satisfying the (exterior) orthogonality condition \eqref{phi_ortho} so that equation \eqref{equ: main_solv} (with $A$ replaced by $A_1$) admits some solution $u^{(1)}\in W^{2,2}_{\delta_0}(\R^n)$. Observe that there are infinitely many such $F$'s as $\mathcal K_{A_1}$ is finite-dimensional (see also Lemma~\ref{lem_ortho}). Note also that by Proposition~\ref{prop_solvability}, we have $u^{(1)}\in W^{k,2}_{\delta_0}(\R^n)\cap W^{k,2}_{-\delta_0-2}(\R^n)$ for any $k\in \{0\}\cup \N$.  Let us now consider
				\begin{equation}\label{def_fg}  
					(f,g) := (u^{(1)}|_{\Omega_e}, F) =(u^{(1)}|_{\Omega_e}, (L_{A_1} u^{(1)})|_{\Omega_e})\in \mathcal C_{A_1}.\end{equation}
				As $\mathcal C_{A_1}=\mathcal C_{A_2}$, it follows from the definition of $\mathcal C_{A_2}$ that there exists $u^{(2)}\in W^{2,2}_{\delta_0}(\R^n)$ with 
				$$ L_{A_2} u^{(2)} =0 \quad \text{in $\Omega$},$$
				such that there holds
				$$ \left. u^{(2)}\right|_{\Omega_e}=\left. u^{(1)}\right|_{\Omega_e}=f \quad \text{and}\quad (L_{A_2}u^{(2)})|_{\Omega_e}= g=F.$$
				As $L_{A_2}u^{(2)}=0$ in $\Omega$ and as it equals $F$ in $\Omega_e$ and finally as $F$ is compactly supported in $\overline{\Omega_e}$, we conclude that
				$$ L_{A_2} u^{(2)} = F \quad \text{in $\R^n$},$$
                holds in the $L^2_{\delta_0+2}$-sense. We note here that since $F\in C^{\infty}_0(\Omega_e)$ and since $u^{(2)}\in W^{2,2}_{\delta_0}(\R^n)$ we have that $u^{(2)}\in W^{k,2}_{\delta_0}(\R^n)\cap W^{k,2}_{-\delta_0-2}(\R^n)$ for all $k\in \N$, thanks to Proposition~\ref{prop_solvability}. This also implies that the equation is satisfied point-wise.
                Next, introducing
                $$u:=u^{(2)}-u^{(1)} \in W^{k,2}_{\delta_0}(\R^n)\cap W^{k,2}_{-\delta_0-2}(\R^n), \quad k=0,1,\ldots,$$ 
                we note that
			\begin{equation}\label{u_2_diff_eq}
				u|_{\Omega_e}=0 \quad \text{and}\quad  L_{A_2}u= \nabla\cdot(\underbrace{(A_{2}-A_{1}}_{\textrm{Henceforth $\widetilde{A}$}})\nabla u^{(1)}) \quad \text{in $\R^n$}.
			\end{equation}
                Recalling that $A_{1}|_{\Omega_e}=A_{2}|_{\Omega_e}=\mathds{1}_{n\times n}$ and that $P_0$ is given by \eqref{P_0_def}, we obtain 
				\begin{equation}
					\label{u_12_eq}
					u|_{\Omega_e}=0 \quad \text{and} \quad P_{0} u= 0 \quad \text{in $\Omega_e$}. 
				\end{equation}
				Noting that the functions $\{p_k\}_{k=1}^N\subset C^{\infty}_0(\R^n)$ satisfy \eqref{p_k_def} and thus are each a nonzero constant in an open neighbourhood of $\overline{\Omega}$, we can now apply our entanglement principle, namely Theorem~\ref{thm: ent} with the choice $u_k:=p_k\,u$ for $k=1,\ldots,N$, in its statement and with $\mathcal O=\mathcal U\setminus \overline{\Omega}$ where $\mathcal U$ is as in \eqref{p_k_def}.  The entanglement principle now implies that the function $u$ must vanish everywhere on $\R^n$. Consequently, $P_{0}u$ must also vanish globally and thus in particular,
				\begin{equation}\label{A_eq_0}
				    \nabla\cdot\left(\widetilde{A}(x)\nabla u^{(1)} \right) =0 \quad \text{in $\R^n$},
				\end{equation}
                where we recall that $\widetilde{A}(x)=A_{2}(x)-A_{1}(x)$. 
                
                To summarize, we have proved that given any $F\in C^{\infty}_0(\Omega_e)$ that satisfies \eqref{phi_ortho} and any solution $u^{(1)}\in W^{2,2}_{\delta_0}(\R^n)$ that solves \eqref{equ: main_solv} (with $A=A_{1}$), equation \eqref{A_eq_0} must be satisfied globally. As $u^{(1)}+\zeta$ is also a solution to equation \eqref{equ: main_solv} for any $\zeta\in \mathcal K_{A_1}$ (recalling the definition \eqref{K_A_def}), and as $\widetilde{A}$ is real-valued, we  deduce in particular that
                \begin{equation}
                	\label{A_eq_0_1}
                	   \nabla\cdot\left(\widetilde{A}(x)\nabla \overline{\zeta} \right) =0 \quad \text{in $\R^n$}, \quad \forall\, \zeta \in \mathcal K_{A_1}.
                \end{equation}
                Next, let $h \in C^{\infty}_0(\Omega)$ be an arbitrary function. By \eqref{A_eq_0_1} we have that 
           		   \begin{equation}
           		 	\label{A_eq_0_2}
           		 	\left( \nabla\cdot\left(\widetilde{A}(x)\nabla h \right), \overline{\zeta}\right)_{L^2(\R^n)} =0 \qquad \forall\, \zeta \in \mathcal K_{A_1}.
           		 \end{equation}
           		 Using this equality together with Proposition~\ref{prop_solvability}, we deduce that there exists a solution $w\in W^{2,2}_{\delta_0}(\R^n)\cap W^{2,2}_{-2-\delta_0}(\R^n)$ to the equation 
           		 $$L_{A_1} w =  \nabla\cdot\left(\widetilde{A}(x)\nabla h \right)  \quad \text{in $\R^n$}.$$              
         Recalling equation \eqref{A_eq_0} and the fact that $h$ is compactly supported in $\Omega$, we deduce that
              \begin{equation}
             	\begin{split}
             		0&=\LC \nabla\cdot\left(\widetilde{A}(x) \nabla u^{(1)} \right),\overline{h}\RC_{L^2(\Omega)}=\LC u^{(1)}, \nabla\cdot\left(\widetilde{A}(x) \nabla \overline{h}\right)\RC_{L^2(\Omega)}\\
             		&=\LC u^{(1)}, \nabla\cdot\left(\widetilde{A}(x) \nabla \overline{h}\right)\RC_{L^2(\R^n)}\\
             		&=\LC u^{(1)},\overline{L_{A_1}w}\RC_{L^2(\R^n)}=\LC L_{A_1}u^{(1)},\overline{w}\RC_{L^2(\R^n)}\\
             		&=\LC F,\overline{w}\RC_{L^2(\Omega_e)}.
             	\end{split}
             \end{equation} 
				As $F\in C^{\infty}_0(\Omega_e)$ is any function that satisfies \eqref{phi_ortho}, we deduce via Lemma~\ref{lem_ortho_cond} that  
				\begin{equation}\label{w_eq_1}
					w|_{\Omega_e} = \zeta|_{\Omega_e}, \quad \text{for some $\zeta \in \mathcal K_{A_1}$}.
				\end{equation}
                Defining the function $\widetilde w \in W^{2,2}_{\delta_0}(\R^n)\cap C^{\infty}(\R^n)$ via $\widetilde w = w-\zeta$, we obtain 
                that
				\begin{equation}\label{tilde_w_eq} 
					L_{A_1}\widetilde w = \nabla\cdot\left(\widetilde{A}(x)\nabla h \right) \quad \text{in $\R^n$}\end{equation} 
				and that $\widetilde{w}=0$ in $\Omega_e$, and therefore, recalling that $h\in C^{\infty}_0(\Omega)$, there holds
				$$ 
				\left. (P_{0}\widetilde w) \right|_{\Omega_e}= \left. \widetilde{w}\right|_{\Omega_e}=0.
				$$
				We can now apply our entanglement principle, namely Theorem~\ref{thm: ent} with the choice $u_k=p_k\widetilde w$ for $k=1,\ldots,N$, in its statement and with $\mathcal O=\mathcal U$ where $\mathcal U$ is as in \eqref{p_k_def}). Noting that $p_k$'s are nonzero on $\Omega$, the theorem yields that $\wt w =0 $ in $\Omega$ so that $\widetilde w$ must vanish on $\R^n$, where we used \eqref{w_eq_1}.
	Hence, we conclude via \eqref{tilde_w_eq} that 
      \begin{equation}\label{h_final}\nabla\cdot\left(\widetilde{A}(x) \nabla h\right) =0 \quad \text{in $\R^n$}.\end{equation}
     
     To summarize, we have shown that given any $h\in C^{\infty}_0(\Omega)$, the above equation must be satisfied globally. Next, let us choose an arbitrary point $x_0 \in \Omega$ and let $B_\delta(x_0)$ be the closed  ball of radius $\delta>0$ centered at $x_0$ where $\delta\in (0,1)$ is sufficiently small, so that $B_\delta(x_0)\subset \Omega$. Let $\chi_0:\R\to \R$ be a nonnegative smooth cutoff function such that $\chi_0(t):=\begin{cases}
         1 & \text{ for } |t|\leq \frac{1}{4} \\
         0 & \text{ for } |t|\geq \frac{1}{2}
     \end{cases}$. Subsequently, let us consider for each $\lambda\in \R$ and each vector $\xi=(\xi_1,\ldots,\xi_n) \in \R^n$, the function
     $$ h_{\lambda,\xi}(x)= e^{\textrm{i} \lambda \xi\cdot (x-x_0)} \chi_0(\delta^{-1}|x-x_0|).$$
     Substituting $h=h_{\lambda,\xi}$ into equation \eqref{h_final}, evaluating it at $x_0$, dividing by $\lambda^2$ and finally taking the limit as $\lambda \to \infty$, we conclude that there holds
     $$ \sum_{j,k=1}^n \widetilde{A}^{jk}(x_0) \,\xi_j \xi_k=0 , \quad \forall\, \xi \in \R^n,$$
     and finally (due to symmetry) that $\widetilde{A}(x_0)=0$. The theorem is now proved since $x_0\in \Omega$ is arbitrary. 
			\end{proof}

			Now, we turn to prove Theorem \ref{Thm: global uniqueness 2}. We present a different perspective based on the Runge approximation property for the sake of interest. We first show the Runge approximation property.

			\begin{proof}[Proof of Theorem \ref{Thm: Runge}]
				Consider the set
				\begin{equation}
					\mathcal{R}:=\left\{ u_f -f : \, f\in C^\infty_0(W)\right\},
				\end{equation}
				where the notation $u_f \in H^{s_N}(\R^n)$ stands for the unique solution to \eqref{equ: main 2}. It suffices to show that $\mathcal{R}$ is dense in $L^2(\Omega)$.
				The proof is standard by using the duality argument with the Hahn-Banach theorem. It suffices to show that if $v\in L^2(\Omega)$ satisfies $\LC  v, w \RC_{L^2(\Omega)}=0$ for any $w\in \mathcal{R}$, then $v\equiv0$. To show this, let $v$ be such a function, that is to say,
				\begin{equation}\label{eq:11111111}
					\LC v,u_{f}-f\RC_{L^2(\Omega)} =0,\quad \text{for any }f\in C_{0}^{\infty}(W).
				\end{equation}
				Let $\phi\in\widetilde{H}^{s_N}(\R^n)$ be the unique solution of $P_q \phi=v$ in $\Omega$ subject to vanishing exterior Dirichlet data, namely $\phi|_{\Omega_e}=0$. Next, observe that  for any $f\in C_{0}^{\infty}(W)$, there holds

				\begin{equation}\label{eq:formal_adjoint_relation}
					B_{q}(\phi,f)=B_{q}(\phi,f-u_{f})=\LC v,f-u_{f}\RC _{L^2(\Omega)}
				\end{equation}
				where $B_q(\cdot,\cdot)$ is the sesquilinear form given by \eqref{B_q bilinear} and we have used the facts that $u_{f}$ is the solution of \eqref{equ: main 2}, and $\phi\in\widetilde{H}^{s}(\Omega)$ (recall that $\phi|_{\Omega_e}=0$).
				Applying \eqref{eq:11111111} and \eqref{eq:formal_adjoint_relation} together with the fact that $q$ is zero outside $\Omega$, we deduce that 
				\[
				\big( \sum_{k=1}^N b_k (-\Delta)^{s_k}\phi,f\big)_{L^2(\mathbb{R}^{n})}=0\mbox{ for any }f\in C_{0}^{\infty}(W).
				\]
				Thus, the function $\phi\in \wt  H^{s_N}(\Omega)$  satisfies
				\[
				\phi=\sum_{k=1}^N b_k (-\Delta)^{s_k}\phi =0 \text{ in }W.
				\]
				Thanks to the entanglement principle of Theorem \ref{thm: ent} again, we obtain $\phi\equiv0$ and then $v\equiv0$. Note that the application of the entanglement principle is justified here as $\phi=0$ in $\Omega_e$, and thus in particular satisfies the super-exponential decay condition automatically.
			\end{proof}

			\begin{proof}[Proof of Theorem \ref{Thm: global uniqueness 2}]
				We follow the same argument as the proof of \cite[Theorem 1.1]{GSU20}. 	
				If $\left. \Lambda_{q_{1}}f\right|_{W_{2}}=\left.\Lambda_{q_{2}}f\right|_{W_{2}}$ for any $f\in C_{0}^{\infty}(W_{1})$, where $W_{1}$ and $W_{2}$ are open subsets of $\Omega_{e}$, by the integral identity \eqref{eq:integral identity}, we have 
				\[
				\int_{\Omega}(q_{1}-q_{2})u_{1}\,\overline{u_{2}}\, dx=0,
				\]
				where $u_{j}\in H^{s_N}(\R^{n})$ solves $P_{q_j}u_{j}=0$ in $\Omega$ with $u_{j}$ having exterior values $f_{j}\in C_{0}^{\infty}(W_{j})$, for $j=1,2$.
				
				Given an arbitrary $\phi\in L^{2}(\Omega)$ and by using the Runge approximation of Theorem \ref{Thm: Runge}, there exists two sequences of functions $\big( u_{\ell}^{1}\big)_{\ell\in \N}$, $\big(u_{\ell}^{2}\big)_{\ell\in \N}\subset  H^{s_N}(\mathbb{R}^{n})$ that fulfill
				\begin{align*}
					& P_{q_1} u_{\ell}^{1}=P_{q_2} u_{\ell}^{2}=0\text{ in \ensuremath{\Omega}},\\
					& \supp\big(u_{k}^{1}\big)\subseteq\overline{\Omega_{1}}, \quad \supp\big(u_{\ell}^{2}\big)\subseteq\overline{\Omega_{2}},\\
					& \left. u_{\ell}^{1}\right|_{\Omega}=\phi+r_{\ell}^{1},\quad \left. u_{\ell}^{2}\right|_{\Omega}=1+r_{\ell}^{2},
				\end{align*}
				where $\Omega_{1}$, $\Omega_{2}\subset \R^n$ are two open sets containing $\Omega$, and $r_{\ell}^{1},r_{\ell}^{2}\to0$ in $L^{2}(\Omega)$ as $\ell\to\infty$. Plug these solutions into the integral identity and pass the limit as $\ell\to\infty$, then we infer that 
				\[	
				\int_{\Omega}\LC q_{1}-q_{2}\RC \phi \, dx=0.
				\]
				As $\phi\in L^{2}(\Omega)$ is arbitrary, we can conclude that $q_{1}=q_{2}$ in $\Omega$. This concludes the proof.
			\end{proof}

			\medskip 
			
			\noindent\textbf{Acknowledgment.} 
			Y.-H. Lin is partially supported by the Ministry of Science and Technology Taiwan, under projects 113-2628-M-A49-003 and 113-2115-M-A49-017-MY3.

			\bibliography{refs} 
			
			\bibliographystyle{alpha}
			
		\end{document}